\crefname{hypothesis}{Hypothesis}{Hypotheses}
\title{Efficient Numerical Algorithms for the Generalized Langevin Equation\thanks{Submitted to the editors DATE.
\funding{This work was made possible by the RULE project of the European Research Council, grant no. 320823.}}}
\author{
Benedict Leimkuhler\thanks{School of Mathematics, University of Edinburgh, UK
  (\email{b.leimkuhler@ed.ac.uk}, \url{http://kac.maths.ed.ac.uk/\string~bl/}).}
\and Matthias Sachs \thanks{Department of Mathematics, University of British Columbia, Canada
  (\email{msachs@math.ubc.ca}, \url{https://sites.google.com/view/matthiassachs}).}
}
\DeclareMathOperator{\diag}{diag}
\def\LcLD{{\Lc}_{\rm LD}}
\def\Lca{{\Lc}_{\rm A}}
\def\Lcb{{\Lc}_{\rm B}}
\def\Lco{{\Lc}_{\rm O}}
\def\Lch{{\Lc}_{\rm H}}
\def\Lcx{{\Lc}_{\rm X}}
\def\Lcy{{\Lc}_{\rm Y}}
\def\Lcz{{\Lc}_{\rm Z}}
\def\LcLDo{{\Lc}_{\widehat{{\rm O}}}}
\newcommand\given[1][]{\:#1\vert\:}
\def\EE{{ \mathbb{E}}}
\def\RR{{\mathbb R}}
\def\error{\mathcal{E}}
\DeclarePairedDelimiter{\innerLmu}{\langle}{\rangle_{L^{2}(\muinv)}}
\def\ccdot{\;\cdot\;}
\def\G{\bm G}
\def\Ed{\widehat{{\bm E}}_{\deltat}}
\def\Ec{{\bm E}_{\deltat}}
\def\GammaLD{\widehat{\Gammabf}}
\def\Ug{{U_{1}}}
\def\Ub{{U_{2}}}
\def\trans{{T}}
\def\NN{{\mathbb{N}}}
\DeclarePairedDelimiter{\floor}{\lfloor}{\rfloor}
\def\({\left (}
\def\){\right )}
\def\xDomain{{\Omega_{\x}}}
\def\qDomain{{\Omega_{\q}}}
\def\pDomain{{\Omega_{\p}}}
\def\sDomain{{\Omega_{\s}}}
\def\testfunc{\mathcal{C}_{P}^{\infty}(\xDomain,\RR)}
\def\muinv{\pi}
\def\bigO{{O}}
\def\dq{{\widehat{\q}}}
\def\dz{{\widehat{\z}}}
\def\dx{{\widehat{\x}}}
\def\lq{{\overline{\q}}}
\def\lz{{\overline{\z}}}
\def\lx{{\overline{\x}}}
\def\xnum{\widehat{\x}}
\def\Rand{{\mathscr R}}
\def\EE{{ \mathbb{E}}}
\def\RR{{\mathbb R}}
\def\K{{ \bf K }}
\def\deltat{{h }}
\DeclarePairedDelimiter{\abs}{\lvert}{\rvert}
\DeclarePairedDelimiter{\norm}{\lVert}{\rVert}
\NewDocumentCommand{\normLinfty}{ s O{} m }{%
  \IfBooleanTF{#1}{\norm*{#3}}{\norm[#2]{#3}}_{L_\infty}%
}
\NewDocumentCommand{\normLtwo}{ s O{} m }{%
  \IfBooleanTF{#1}{\norm*{#3}}{\norm[#2]{#3}}_{L_2}%
}
\NewDocumentCommand{\normEuc}{ s O{} m }{%
  \IfBooleanTF{#1}{\norm*{#3}}{\norm[#2]{#3}}_{2}%
}
\NewDocumentCommand{\normFrob}{ s O{} m }{%
  \IfBooleanTF{#1}{\norm*{#3}}{\norm[#2]{#3}}_{{\rm F}}%
}
\def\bf{\bm}
\def\W{{\bf W}}
\def\I{{\bf I}}
\def\S{{{\bf S}_{\deltat}}}
\def\Sw{\widetilde{\S}}
\def\Qw{\widetilde{\Q}}
\def\F{{ \bf F}}
\def\G{{\bf G}}
\def\dd{{\rm d}}
\def\q{{ \bf q}}
\def\M{{ \bf M}}
\newcommand{\etabf}{{\boldsymbol \eta}}
\def\p{{ \bf p}}
\def\s{{ \bf s}}
\def\x{{ \bf x}}
\def\z{{ \bf z}}
\def\F{{{ \bf F}_{\deltat}}}
\def\Q{{ \bf Q}}
\def\C{{ \bf C}}
\def\0{{ \bf 0}}
\def\Lc{\mathcal{L}}
\def\Pc{\mathcal{P}}
\def\Kc{\mathcal{K}}
\def\Acal{\mathcal{A}}
\def\Lcg{\mathcal{\Lc}_{\rm GLE}}
\def\Kbf{{\bf K}}
\def\thetabf{{  \bm{\theta}}} 
\def\Gammabf{{  \bm{\Gamma} }} 
\def\Sigmabf{{  \bm{\Sigma} }}
\def\mubf{{  \bm{\mu} }} 
\def\Omegabf{{  \bm{\Omega} }} 
\def\Psibf{{\bm{\Psi}}} 
\def\numin{\hat{\Phi}}
\def\numint{\numin_{\deltat}}
\newtcolorbox{eqtbox}[1][]{%
    enhanced,breakable,
    colframe=black,
    colback=white,
    arc=1mm,
    borderline={0.1mm}{0mm}{white},
    attach boxed title to top center={yshift=-3mm},
    boxed title style={size=small,colback=white,boxsep=1.0mm,left=0mm,right=0mm}, 
    coltitle=black,
    boxsep=1mm,
    left=0mm,
    right=0mm,
    before={\noindent},
    segmentation style={thin, red!30},
    title=#1%
}
\newtheorem{assumption}{Assumption}
\crefname{thm}{Theorem}{Theorems}
\crefname{assumption}{Assumption}{Assumptions}
\crefname{theorem}{Theorem}{Theorems}
\crefname{proposition}{Proposition}{Propositions}
\crefname{lemma}{Lemma}{Lemmas}
\crefname{collorary}{Collorary}{Colloraries}
\crefname{assumption}{Assumption}{Assumptions}
\crefname{remark}{Remark}{Remark}
\crefname{example}{Example}{Example}
\crefname{definition}{Definition}{Definitions}
\crefname{section}{Section}{Sections}
\crefname{figure}{Figure}{Figures}
\crefname{chapter}{Chapter}{Chapters}
\crefname{table}{Table}{Tables}
\crefname{algorithm2e}{Algorithm}{Algorithms}
\newcommand{\mscom}[1]{\textcolor{magenta}{MS: #1}}
\newcounter{mycounter}
\begin{document}

\maketitle

\begin{abstract}
We 
study the design and implementation of numerical methods to solve the generalized Langevin equation (GLE) focusing on canonical sampling properties of numerical integrators. For this purpose, we cast the GLE in an extended phase space formulation and derive a family of splitting methods which generalize existing Langevin dynamics integration methods. We show exponential convergence in law and the validity of a central limit theorem for the Markov chains obtained via these integration methods, and we show that the dynamics of a suggested integration scheme is consistent with asymptotic limits of the exact dynamics and can reproduce (in the short memory limit) a superconvergence property for the analogous splitting of underdamped Langevin dynamics.   We then apply our proposed integration method to several model systems, including a Bayesian inference problem.  We demonstrate in numerical experiments that
our method outperforms other proposed GLE integration schemes in terms of the accuracy of  sampling.  Moreover, using a parameterization of the memory kernel in the GLE as proposed by Ceriotti et al \cite{Ceriotti2010}, our experiments indicate that the obtained GLE-based sampling scheme outperforms state-of-the-art sampling schemes based on underdamped Langevin dynamics in terms of robustness and efficiency.
\end{abstract}

\begin{keywords}
Generalized Langevin dynamics, approximate Markov chain Monte Carlo, symmetric splitting, BAOAB
\end{keywords}

\begin{AMS}
65C30, 65C40, 82M37
\end{AMS}

\section{Introduction}\label{sec:introduction}
In this article we study numerical discretization schemes for a generalized Langevin equation (GLE) of the form
\begin{equation}\label{eq:GLE:nonmark}\tag{GLE}
\begin{aligned}
\dot{\q} &=  \M^{-1}\p, \\
\dot {\p} &=  -\nabla_{\q}U(\q) -  \int_{0}^{t}\K(t-s)\M^{-1}\p(s) \dd s  + \etabf(t).
\end{aligned}
\end{equation}
where the dynamic variables $\q \in \qDomain = \RR^{n}, \p \in \pDomain =\RR^{n}$ denote the position and momenta of a  Hamiltonian system with energy function 
\begin{equation}\label{eq:hamiltonian:gen}
H(\q,\p) = U(\q) + \frac{1}{2} \p^{\trans}\M^{-1}\p.
\end{equation}
 The mass matrix $\M \in \mathbb{R}^{n\times n}$ is assumed to be symmetric positive definite, and $U$ is a smooth confining potential function (i.e., $U\in \mathcal{C}^{\infty}(\qDomain, \RR)$ and  $U(\q) \rightarrow \infty$ as $\norm{\q} \rightarrow \infty$) such that $\int_{\qDomain} {\rm e}^{-U(\q)} \dd \q <\infty$. $\K : [0,\infty) \rightarrow \mathbb{R}^{n\times n }$ is a matrix-valued (generalized) function, which is referred to as the memory kernel, and $\etabf $ is a stationary Gaussian process with vanishing mean taking values in $\mathbb{R}^{n}$ and which (in equilibrium) is assumed to be statistically independent of $\q$ and $\p$. The dynamical variable $\etabf$ models a random force, which is such that a fluctuation dissipation relation holds. That is, the  auto-covariance function of the random force and the memory kernel $\K$ coincide up to a constant pre-factor, i.e., 
 \[
 \EE[\etabf(s+t) \etabf^{\top}(s)] = \beta^{-1} \K(t), \text{ for all } t,s>0.
 \]
 In its general form \eqref{eq:GLE:nonmark} the GLE is a non-Markovian dynamical model, meaning that the evolution of the state of the described system depends not only on the state itself but on the state history. The {\em underdamped Langevin equation} is obtained as a special Markovian variant of the GLE when the memory kernel is chosen as $\K(t)=\GammaLD\delta(t)$ and $\eta = \sqrt{2\GammaLD \beta^{-1}}\dot{\W}$, where $\delta(\cdot)$ denotes the Dirac delta function,
 $\dot{\W}$ is a Gaussian white noise in $\RR^{n}$ with independent components, i.e., $\dot{\W}=[\dot{W}_{i}]_{1\leq i \leq n}$, such that, $\dot{W}_{i}\sim \mathcal{N}(0,1)$ and $\mathbb{E}[\dot{W}_{i}(t) \dot{W}_{j}(s)] = \delta_{ij} \delta(t-s)$,
  and $\GammaLD$ is a symmetric positive  definite matrix which is commonly referred to as the friction matrix. Under this parameterization \eqref{eq:GLE:nonmark} simplifies to the It\^o diffusion 
\begin{equation}\label{eq:LD}\tag{LD}
\dot{\q} =  \M^{-1}\p, ~~~\dot {\p} =  -\nabla_{\q}U(\q) -  \GammaLD\M^{-1} \p  + \sqrt{2\GammaLD\beta^{-1}} \dot{\W}.
\end{equation}

\subsection{The GLE as a dynamical model}\label{sec:int:dyn:model}
Traditionally, the generalized Langevin equation is widely used in thermodynamics to model the dynamics of an open system which exchanges energy with one or more heat baths. The equation can be formally derived via the Mori-Zwanzig projection formalism \cite{Mori1965,Zwanzig1973}. As such, it provides a dynamical description of the projection of a physical system onto a finite subset of its degrees of freedom. In the absence of a clear scale separation in the time evolution of explicitly modeled degrees of freedom and the traverse degrees of freedom, Markovian approximations in the form of \eqref{eq:LD} fail to reproduce the dynamical properties of the system. Incorporation of memory effects via the stochastic integro-differential equation  \eqref{eq:GLE:nonmark} are key to an accurate description of the system dynamics in such a setup. As such the GLE is used as a dynamical model in a wide range of applications, including coarse grained meso- and macro-scale molecular particle dynamics models \cite{Givon2004,Li2015a,li2017computing}, simulation of solids \cite{Kantorovich2008,Ness2015}, non-equilibrium dynamics in open systems with temperature gradient \cite{Eckmann1998,Rey-Bellet2002,Ness2016}, 
complex fluids and (anomalous) diffusive transport in soft matter \cite{Fricks2009a,McKinley2009,vasquez2015complex}.

\subsection{Application in sampling}\label{sec:int:sampling}
Besides its application as a dynamical model, the GLE has been used in molecular sampling to design (approximate) Markov chain Monte Carlo (MCMC) methods with enhanced sampling properties \cite{Ceriotti2009,Ceriotti2010,Morrone2011,Wu2015,gle4md}, and there are theoretical results showing that GLE-based sampling schemes \cite{mou2019high} and annealing schemes \cite{chak2020generalised} can exhibit better convergence properties in comparison to schemes based on an  (underdamped) Langevin equation. Indeed, under certain conditions  (see \cref{sec:ergodicity}) on the memory kernel $\K$, the process defined by \eqref{eq:GLE:nonmark} is exponentially ergodic with unique invariant measure given by the Gibbs-Boltzmann distribution
\begin{equation}\label{eq:GB}
\muinv(\dd \q\,\dd\p) = \frac{1}{Z} {\rm e}^{-\beta H(\q,\p) }\dd \q \dd \p,
\end{equation}
so that in particular, the process $\q(t), t \geq 0$ can be used to draw samples from the marginal measure 
\begin{equation}
\muinv_{q} (\dd \q ) \propto {\rm e}^{-\beta U(\q)} \dd \q. 
\end{equation}
In a nutshell, the idea behind the enhanced sampling methods developed in \cite{Ceriotti2009,Ceriotti2010} is to equip the GLE with  a memory kernel of the form $\K(t) =K(t) \I_{n}$ which is constructed such that mixing times associated to \eqref{eq:GLE:nonmark} with $U(\q) =  \frac{1}{2}\omega \q^{2} $ are minimized over a prescribed frequency range $\omega \in [\omega_{\min},\omega_{\max}] \subset (0,\infty)$. The resulting memory kernel  is such that the effect of the convolution $\int_{0}^{t}\K(t-s)\M^{-1}\p(s) \dd s$  
behaves like a high-pass filter on the momentum trajectory. This allows slower modes to evolve almost ballistically while faster modes are sufficiently damped  to avoid resonance effects in numerical discretizations of the dynamics. Even though this construction assumes the target $\muinv_{q}$ to be Gaussian, the enhanced sampling properties have been shown in practice to extend to the non-Gaussian case \cite{Ceriotti2010,Ceriotti2010b}. In particular in sampling problems where $\muinv_{q}$ is ill conditioned (in the sense that the associated covariance matrix has a large condition number), such constructed sampling schemes may result in drastically improved sampling efficiency and robustness in comparison to schemes obtained by discretization of an underdamped Langevin equation \cite{Ceriotti2010,tayu2020}.

\subsection{Scope and main results of this article}
The purpose of this article is to provide a class of numerical integrators with well understood theoretical properties 
which are suitable for simulation of the GLE in a wide range of applications. 
Within this class of numerical integrators we identify one scheme, \textnormal{gle-BAOAB}, which we show analytically and in numerical experiments to outperform previously proposed schemes in terms of numerical discretization error and numerical stability. 

The fact that in any computer simulation only finite memory is available means that any computer simulation of \eqref{eq:GLE:nonmark} inevitably results in a quasi-Markovian process. That is, the obtained process is Markovian in some (generally obscure) state space. Here, we focus on a particular class of quasi-Markovian instances of  \eqref{eq:GLE:nonmark}, whose Markovian form can be related directly to a certain class of It\^o diffusion processes in an extended state space (see \cref{sec:extend:var:1}). Instead of attempting to directly discretize \eqref{eq:GLE:nonmark}, we numerically integrate the corresponding equivalent stochastic differential equation (SDE). This approach allows us in applications where the solution process of \eqref{eq:GLE:nonmark} is not quasi-Markovian (this is for example the case if $\K(t)$ has the form of a power law) to clearly separate the error due to time-discretization of the process --the focus of this article-- from the error induced by a quasi-Markovian approximation. 

The focus of this article is on the construction and analysis of integration schemes for the quasi-Markovian approximation. The proposed time discretizations (Sec. \cref{sec:numint}) of the equivalent SDE are constructed as  symmetric stochastic splitting schemes. The decomposition (of the associated generator) which is used as the basis for these stochastic splitting schemes is informed by results on stochastic splitting schemes for the underdamped Langevin equation \cite{bou2010long,Leimkuhler2013a,LeMaSt2015,abdulle2014high,Abdulle2015long}. 

In terms of the analysis we focus on the properties of the Markov chain obtained by such time-discretization of the quasi-Markovian approximation. That is, we discuss the existence of a stepsize dependent invariant measure $\muinv_{\deltat}$ of that Markov chain (here, $\deltat>0$ denotes the stepsize of the time discretization) and provide conditions for geometric convergence and the validity of a central limit theorem (\cref{sec:error:analysis}). Moreover, we provide a detailed analysis of the marginal measure $\muinv_{\deltat}(\dd \q)$ of the position variable $\q$ and its convergence to  the exact marginal measure $\muinv(\dd \q)$ as $\deltat \rightarrow 0$ (\cref{sec:num:error}). While the approximation accuracy of the measure $\muinv_{\deltat}(\dd \q)$ obviously is highly relevant for sampling applications, we emphasize that an accurate approximation of the marginal measure $\muinv(\dd \q)$ is also relevant for  accurately recovering dynamical properties  \cite{leimkuhler2016efficient,LeMaSt2015}.

Another aspect covered in this article is the behavior of the introduced stochastic splitting schemes in certain limits of parametrization (\cref{sec:limit:methods}). We show that the obtained numerical schemes behave consistently with well known homogenization results (summarized in \cref{sec:limit:dyn}) of the continuous dynamics and reduce to numerical integrators of the corresponding limiting dynamics with well known numerically favorable properties.  This has important implications for both sampling applications and for applications where the GLE is used as a dynamical model. In the former case our results ensure that the convergence order of the stepsize dependent error incurred in the invariant measure $\muinv_{\deltat}$ is not reduced in the respective limits. In fact we show that in the overdamped limit of \eqref{eq:GLE:nonmark} we obtain an increase of convergence order (fourth order instead of second order). A property which--in accordance with previous results \cite{Leimkuhler2013a,LeMaSt2015}--we refer to as {\em super-convergence}. In the latter case our results ensure that the dynamical properties of the simulated dynamics remain consistent with the underdamped/white-noise limit of \eqref{eq:GLE:nonmark}. 

In the remainder of this section we set up the basic framework for studying the Markovian reformulation of the GLE. We briefly review the above-mentioned homogenization results for the continuous dynamics as well as some results on the ergodic properties of the continuous dynamics. 

\subsection{Quasi-Markovian generalized Langevin equations (QGLE)}\label{sec:extend:var:1}
Consider the SDE defined on the extended space $\xDomain = \qDomain \times \pDomain \times \RR^{m}$,
\begin{equation}\label{eq:qgle}\tag{QGLE}
\begin{aligned}
\dot{\q} &= \M^{-1} \p ~ \dd t, \\
\begin{pmatrix}
\dot{\p}\\
\dot{\s}
\end{pmatrix} & =
\begin{pmatrix}
-\nabla_{\q}U(\q)\\
\0
\end{pmatrix}
-\Gammabf
\begin{pmatrix}
\M^{-1}\p\\
\s
\end{pmatrix}
+\sqrt{\beta^{-1}}\Sigmabf \dot{\W}, 
\end{aligned}
\end{equation}
where $\Gammabf,\Sigmabf$ are block matrices of the form
\begin{equation*}
\Gammabf := 
\begin{bmatrix}
   \Gammabf_{1,1} & \Gammabf_{1,2}\\
   \Gammabf_{2,1} & \Gammabf_{2,2}
\end{bmatrix}  \in \RR^{(n+m) \times (n+m)}, 
\hspace{0.1in} 
\Sigmabf := 
\begin{bmatrix}
\Sigmabf_{1}\\
\Sigmabf_{2}
\end{bmatrix}
=
\begin{bmatrix}
\Sigmabf_{1,1} & \Sigmabf_{1,2}\\
\Sigmabf_{2,1} & \Sigmabf_{2,2}
\end{bmatrix} \in \RR^{(n+m) \times (n+m)},
\end{equation*}
with $m\geq n$, and where  $\dot{\W}$ is a Gaussian white noise in $\RR^{n+m}$ with independent components, i.e., $\dot{\W}=[\dot{W}_{i}]_{1\leq i \leq n+m}$, such that, $\dot{W}_{i}\sim \mathcal{N}(0,1)$ and $\mathbb{E}[\dot{W}_{i}(t) \dot{W}_{j}(s)] = \delta_{ij} \delta(t-s)$.\\

In what follows we first provide a set of sufficient conditions which ensure that the SDE \eqref{eq:qgle} can be rewritten in the form of the stochastic integro-differential equation \eqref{eq:GLE:nonmark} and possesses an invariant measure which is such that its marginal in $\q,\p$ coincides with the Gibbs-Boltzmann distribution.  

\begin{assumption}\label{as:gle}
\hfill
\begin{enumerate}[label=(\roman*)]
\item \label{as:gle:it:1}
There exists a symmetric positive definite matrix $\Q \in\RR^{m\times m}$ such that,
\begin{equation}\label{eq:Lyapunov:Gamma:q}
\Gammabf 
\begin{pmatrix}
\I_{n} & \0 \\
\0 & \Q
\end{pmatrix}
 + 
 \begin{pmatrix}
\I_{n} & \0 \\
\0 & \Q
\end{pmatrix}
\Gammabf^{\trans} = \Sigmabf  \Sigmabf^{\trans}.
\end{equation}
\item \label{as:gle:it:2}
 the real parts of all eigenvalues of the matrix
\begin{equation}
\Gammabf_{\M}:=\Gammabf \, \begin{pmatrix}
\M^{-1} & \0 \\
\0 & \I_{m}
\end{pmatrix}
\end{equation}
are positive. That is $-\Gammabf_{\M} $ is a stable matrix.
\item the matrices $\Gammabf_{1,1}$ and $\M$ commute. 
\end{enumerate}
\end{assumption}
A derivation of the following proposition can be found in \cite{Ceriotti2010} (see also \cite{leimkuhler2017ergodic}).
\begin{proposition}\label{prop:GLE:cons}
Let  \cref{as:gle} hold and let $\Q \in \RR^{m\times m}$ be as specified therein. The SDE \cref{eq:qgle} 
conserves the probability measure $\muinv(\dd \q\,\dd \p \, \dd \s)$ with density
\begin{equation}\label{eq:invariant:measure}
\rho_{\Q,\beta}(\q,\p,\s) \propto e^{-\beta [ U(\q) + \frac{1}{2}\p^{\trans} \M^{-1}\p + \frac{1}{2}\s^{\trans} \Q^{-1} \s]}.\end{equation}
If further, $\s(0)\sim \mathcal{N}(\0,\Q)$ (independent of $\q(0),\p(0),\dot{\W}$), 
then the SDE  \cref{eq:qgle}  can be rewritten in the form of a stochastic integro-differential equation  \cref{eq:GLE:nonmark} with 
\begin{equation}\label{eq:auto-cov}
\K(t) = \Kbf_{\Gammabf}(t) := \Gammabf_{1,1} \delta(t) - \Gammabf_{1,2} e^{-t \Gammabf_{2,2}} \Gammabf_{2,1}.
\end{equation}
\end{proposition}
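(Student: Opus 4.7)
The plan is to split the proposition into two independent pieces: (a) showing $\rho_{\Q,\beta}$ is an invariant density of \cref{eq:qgle}, and (b) deriving the integro-differential form with the kernel $\Kbf_{\Gammabf}$. Part (a) is a Fokker--Planck computation that isolates the role of the Lyapunov identity \cref{eq:Lyapunov:Gamma:q} and of the commutation assumption (iii); part (b) is a variation-of-constants computation on the $\s$-equation followed by a stationarity check on the resulting coloured noise.

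For (a), I would write the infinitesimal generator of \cref{eq:qgle} as $\Lc=\Lc_{H}+\Lc_{\rm FD}$, where $\Lc_{H}=(\M^{-1}\p)\cdot\nabla_{\q}-\nabla_{\q}U\cdot\nabla_{\p}$ is the Hamiltonian part and $\Lc_{\rm FD}$ is the linear/OU part acting on $(\p,\s)$ with drift matrix $\Gammabf_{\M}$ and diffusion $\beta^{-1}\Sigmabf\Sigmabf^{\trans}$. A direct differentiation gives $\Lc_{H}^{*}\rho_{\Q,\beta}=0$ because the $\q$- and $\p$-derivatives of the $\p$-independent factors vanish and the Hamiltonian flow preserves $e^{-\beta H}$. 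For $\Lc_{\rm FD}^{*}\rho_{\Q,\beta}=0$, I would use the fact that a linear SDE $\dot{\bm y}=-A\bm y+B\dot{\W}$ admits a centered Gaussian invariant density with covariance $C$ if and only if $AC+CA^{\trans}=BB^{\trans}$. Setting $\bm y=(\p,\s)^{\trans}$, $A=\Gammabf_{\M}$, $B=\sqrt{\beta^{-1}}\Sigmabf$, and $C=\beta^{-1}\diag(\M,\Q)$, assumption (iii) lets me rewrite $\Gammabf_{\M}\diag(\M,\Q)=\Gammabf\,\diag(\I_{n},\Q)$ (and symmetrically), so the required identity reduces verbatim to \cref{eq:Lyapunov:Gamma:q}. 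Assumption (ii) guarantees that this is the unique invariant Gaussian.

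For (b), I would treat the $\s$-equation as an inhomogeneous linear SDE with forcing $-\Gammabf_{2,1}\M^{-1}\p(t)$ and apply the variation-of-constants formula,
\begin{equation*}
\s(t)=e^{-t\Gammabf_{2,2}}\s(0)-\int_{0}^{t}e^{-(t-u)\Gammabf_{2,2}}\Gammabf_{2,1}\M^{-1}\p(u)\,\dd u+\sqrt{\beta^{-1}}\int_{0}^{t}e^{-(t-u)\Gammabf_{2,2}}\Sigmabf_{2}\,\dd\W(u).
\end{equation*}
Substituting this into the $\p$-equation and grouping the $\p$-dependent terms produces a convolution whose kernel is exactly $\Kbf_{\Gammabf}$, once the instantaneous term $\Gammabf_{1,1}\M^{-1}\p(t)$ is re-expressed as $(\Gammabf_{1,1}\delta(t-\cdot))\ast(\M^{-1}\p)(t)$. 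The remaining $\s(0)$-term, the stochastic convolution, and the original $\sqrt{\beta^{-1}}\Sigmabf_{1}\dot{\W}$ are then grouped as the random forcing $\etabf(t)$.

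The final and technically delicate step is to confirm that $\etabf$ is stationary Gaussian with $\EE[\etabf(t+s)\etabf(s)^{\trans}]=\beta^{-1}\Kbf_{\Gammabf}(t)$. Gaussianity and zero mean are automatic; stationarity uses crucially $\s(0)\sim\mathcal{N}(\0,\Q)$ independent of $\W$, which is exactly the distribution that makes the Markovian $\s$-process equivalent to a two-sided stationary OU driven by a noise agreeing with $\Sigmabf_{2}\dot\W$ on $[0,\infty)$. I would compute the covariance by splitting $\etabf$ into the independent $\s(0)$-contribution and the stochastic-integral-plus-white-noise contribution, using Itô isometry for the latter; the $\s(0)$-piece produces $\Gammabf_{1,2}e^{-t\Gammabf_{2,2}}\Q\Gammabf_{1,2}^{\trans}$, the Itô term adds a contribution of the form $\Gammabf_{1,2}\bigl[\int_{0}^{\min(t,t+s)\wedge s}\ldots\bigr]\Gammabf_{1,2}^{\trans}$, and the $\delta$-part comes from $\Sigmabf_{1}\Sigmabf_{1}^{\trans}$ together with the Itô cross-terms. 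The main obstacle is precisely this bookkeeping: one has to use all three blocks of \cref{eq:Lyapunov:Gamma:q} --- the (2,2) block to evaluate the stationary OU covariance, the off-diagonal blocks $\Gammabf_{1,2}\Q+\Gammabf_{2,1}^{\trans}=\Sigmabf_{1}\Sigmabf_{2}^{\trans}$ to collapse the $\s(0)$- and stochastic-integral pieces into the single exponential $-\Gammabf_{1,2}e^{-t\Gammabf_{2,2}}\Gammabf_{2,1}$, and the (1,1) block $\Gammabf_{1,1}+\Gammabf_{1,1}^{\trans}=\Sigmabf_{1}\Sigmabf_{1}^{\trans}$ to match the $\delta(t)$ coefficient --- after which the claimed identity $\EE[\etabf(t+s)\etabf(s)^{\trans}]=\beta^{-1}\Kbf_{\Gammabf}(t)$ follows and completes the equivalence with \cref{eq:GLE:nonmark}.
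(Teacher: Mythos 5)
The paper does not actually prove \cref{prop:GLE:cons}; it defers to Ceriotti et al.\ (2010) and to Leimkuhler and Sachs (2017). Your reconstruction is correct and follows exactly the route those references take: a Fokker--Planck calculation that reduces stationarity of the $\z=(\p,\s)$-marginal $\mathcal{N}(\0,\beta^{-1}\diag(\M,\Q))$ to the Lyapunov identity \cref{eq:Lyapunov:Gamma:q}, and a variation-of-constants elimination of $\s$ producing the kernel \cref{eq:auto-cov} together with the coloured noise $\etabf$, whose covariance is matched to $\beta^{-1}\Kbf_{\Gammabf}$ via the block structure of \cref{eq:Lyapunov:Gamma:q}.

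Two details deserve correction. First, the identity $\Gammabf_{\M}\diag(\M,\Q)=\Gammabf\diag(\I_{n},\Q)$ does not use \cref{as:gle}\,(iii): since $\Gammabf_{\M}=\Gammabf\diag(\M^{-1},\I_m)$, one has $\Gammabf_{\M}\diag(\M,\Q)=\Gammabf\diag(\M^{-1}\M,\Q)=\Gammabf\diag(\I_n,\Q)$ unconditionally, and $\diag(\M,\Q)\Gammabf_{\M}^{\trans}=\diag(\I_n,\Q)\Gammabf^{\trans}$ uses only that $\M$ is symmetric. The commutation hypothesis is invoked elsewhere in the paper, namely to ensure $-\G$ is a stable matrix in the Lyapunov-function construction for \cref{prop:ergodicity:discrete}, not in the invariance computation. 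Second, since the paper requires the fluctuation--dissipation relation only for $t,s>0$, the $\delta(t)$ term of $\Kbf_{\Gammabf}$ contributes nothing at positive lag; the verification there uses the $(2,2)$-block of \cref{eq:Lyapunov:Gamma:q} (giving the stationary OU covariance $\beta^{-1}\Q$ of the noise-driven part of $\s$) and the $(2,1)$-block $\Gammabf_{2,1}+\Q\Gammabf_{1,2}^{\trans}=\Sigmabf_{2}\Sigmabf_{1}^{\trans}$ to reduce $\Q\Gammabf_{1,2}^{\trans}-\Sigmabf_{2}\Sigmabf_{1}^{\trans}$ to $-\Gammabf_{2,1}$, but the $(1,1)$-block you cite plays no role in that check.
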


We refer to generalized Langevin equations whose memory kernel is of the form specified in \cref{eq:invariant:measure} as {\em Quasi-Markovian Generalized Langevin equations} (QGLEs). While more general parametrization of \eqref{eq:qgle} are possible, we focus in this article on two classes of memory kernels 
which can be characterized by some additional constraints on the form of $\Gammabf$ as summarized in \cref{as:gle:kernel}. Memory kernels falling into either of these two classes (or which are positive linear combinations of instances of either class) make up almost all parameterizations of \eqref{eq:qgle} appearing in the literature. 
\begin{assumption}\label{as:gle:kernel}
The matrices $\Gammabf$ and $\Q$ are such that either 
\begin{enumerate}[label=(\roman*)]
\item \label{as:gle:kernel:it:2}
$\Gammabf_{1,1} = \0$ and  $\Gammabf_{1,2 }\Q=  - \Gammabf_{2,1}^{\trans}$
\setcounter{mycounter}{\value{enumi}}
\end{enumerate}
or
\begin{enumerate}[label=(\roman*)]
\setcounter{enumi}{\value{mycounter}}
\item  \label{as:gle:kernel:it:1}
$\Gammabf_{1,1}$ is symmetric positive definite, $\Q= \I_{m}$, and $\Gammabf_{1,2}= \Gammabf_{2,1}^{\trans}$
\end{enumerate}
\end{assumption}

Parametrization according to \cref{as:gle:kernel}\,\ref{as:gle:kernel:it:2} allows for the representation of memory kernels of the form
\[
\K(t) =\I_{n} \sum_{l=1}^{R} c_{l} e^{-a_{l} t} \cos( b_{l} t ), \qquad c_{l}, a_{l} >0, \; b_{l} \geq 0, \; R \in \NN.
\] 
By choosing the matrices $\Gammabf_{1,2}$ and $\Q$ appropriately, one can additionally include cross-correlations between components. Typically such parameterizations are used in applications where the GLE is being treated as a dynamical model; see \cref{sec:int:dyn:model}.

Parametrization according to \cref{as:gle:kernel}\,\ref{as:gle:kernel:it:1} allows for the representation of memory kernels where $\K$ acts as a high-pass filter. Such parameterizations are most commonly used in sampling applications; see \cref{sec:int:sampling}. For example, the memory kernel $\K(t) = \I_{n} \left ( \gamma \delta(t) - \lambda \frac{\gamma}{\tau} e ^{-t/\tau} \right ), \lambda \in [0,1),\gamma>0, \tau>0$ used in \cite{Wu2015} can be represented in the form \eqref{eq:auto-cov} using $\Gammabf_{1,1} = \gamma  \I_{n}$, $\Gammabf_{1,2}=\Gammabf_{2,1} = \sqrt{{\lambda \gamma}/{\tau}} \I_{n} $ and $\Gammabf_{2,2} =  \frac{1}{\lambda}  \I_{n}$.

\subsubsection{Ergodicity and central limit theorem for QGLEs}\label{sec:ergodicity}
Under additional conditions on the coefficients of \eqref{eq:qgle}, the solution process is ergodic with unique invariant measure $\muinv$ and satisfies a central limit theorem. This follows from the fact that the semi-group of the associated evolution operators decays exponentially in a weighted $L_{\infty}$-norm.

In order to state this result, we need to first set some notation. 
For prescribed $\Kc \in \mathcal{C}^{\infty}(\xDomain, [1,\infty))$ with well-defined limit $\lim_{\left \|x \right\|\rightarrow \infty}\mathcal{K}(x) = \infty$, we define the set of functions
\[
L^{\infty}_{\Kc}(\xDomain) := \left \{ \varphi : \xDomain \rightarrow \RR, \text{ measurable } :\sup_{x \in \xDomain} \abs*{\frac{\varphi(x)}{\Kc(x)}} < \infty \right  \},
\]
which, when equipped with the norm $\varphi \mapsto \norm{\varphi}_{L^{\infty}_{\Kc}} :=\sup_{\x \in \xDomain} \abs*{\varphi(\x) / \Kc(\x)}$,
forms a Banach space. We denote by $\mathcal{C}_{P}(\xDomain,\RR) = \bigcap_{l \in \NN} L^{\infty}_{\Kc_{l}}(\xDomain)$ with $\Kc
_{l}(\x) = 1+\abs{\x}^{2l}$ the set of at most polynomially growing real-valued functions and by $\mathcal{C}_{P}^{p}(\xDomain,\RR)\subseteq \mathcal{C}_{P}(\xDomain,\RR)$ the set of real-valued function whose partial derivatives up to order $p\in \NN$ exist and grow at most polynomially, i.e., $\varphi \in \mathcal{C}_{P}^{p}(\xDomain,\RR) \iff \partial_{\x_{i_{1}}}\dots  \partial_{\x_{i_{m}}} \varphi \in \mathcal{C}_{P}(\xDomain,\RR)$ for any differential operator $\partial_{\x_{i_{1}}}\dots  \partial_{\x_{i_{m}}}$ with $m\leq p$. Unless stated otherwise, we consider operators introduced in the following to be defined on the core 
\begin{equation}\label{eq:def:testfunc}
\testfunc := \bigcap_{p=1}^{\infty}\mathcal{C}_{P}^{p}(\xDomain,\RR).
\end{equation}
In particular, the infinitesimal generator, $\Lcg$, of \cref{eq:qgle} when constrained to this set of test functions takes the form
\begin{equation}\label{def:gle:generator}
\Lcg = -\nabla_{\q}U(\q)\cdot \nabla_{\p} + \M^{-1}\p \cdot \nabla_{\q}- \Gammabf_{\M}\begin{pmatrix} \p \\ \s\end{pmatrix} \cdot \nabla_{\z} + \frac{\beta^{-1}}{2} \Sigmabf \Sigmabf^{\trans}  : \nabla^{2}_{\z},
\end{equation}
where $ \Sigmabf\Sigmabf^{\trans}  : \nabla^{2}_{\z} = \sum_{i=1}^{M} \sum_{j=1}^{M}\left [  \Sigmabf\Sigmabf^{\trans}  \right ]_{i,j} \partial_{\z_{i}}\partial_{\z_{j}}, ~M=n+m$. Here, as well as in the remainder of this article, $\z=(\p,\s)$ is used as  shorthand for the combined vector of momenta and auxiliary variables. 
We denote the formal $L^{2}$ adjoint (also known as the Fokker-Planck operator) of $\Lcg$ as $\Lcg^{\dagger}$. For $t\geq 0$, we denote the evolution operator associated with the SDE \cref{eq:qgle} as $e^{t\Lcg}$, i.e.,
$ \left ( e^{t\Lcg}\varphi\right )(x) = \EE \left [ \varphi(\x(t)) \given \x(0) = x \right ]$,
where  the expectation is with respect to the driving Wiener process, $\W$ of \cref{eq:qgle} and $\x=(\q,\p,\s)$ is used as a shorthand for the combined vector of positions, momenta, and auxiliary variables. 

\begin{assumption}\label{as:gle:2}
\hfill
\begin{enumerate}[label=(\roman*)]
\item \label{as:gle:it:3} The matrices $\Gammabf_{\M}$ and $\Sigmabf$ are such that the operator $\partial_{t}-\Lc_{\rm GLE}$ is hypoelliptic. (see \cite[Proposition 7]{leimkuhler2017ergodic} for sufficient algebraic conditions on $\Gammabf_{\M},\Sigmabf$ for Hypoellipticity of the operator.) In particular,
\[
\RR^{M} = \bigcup_{k=0}^{M}  \bigcup_{i=0}^{M}  \Gammabf_{\M}^{k} \Sigma_{i}, \quad \Sigmabf = ( \Sigma_{1},\dots,\Sigma_{M}),\quad M=n+m.
\]
\item \label{as:gle:it:4} The potential function $U$ is of the form $U(\q) = \Ug(\q) + \Ub(\q)$,
where $\Ug(\q) = \frac{1}{2} \q^{\trans} {\bm \Omega} \q$, with ${\bm \Omega}\in \RR^{n\times n}$ being a symmetric positive definite matrix, and $\Ub \in \mathbb{C}^{\infty}(\RR^{n},\RR)$ is such that its derivatives are uniformly bounded in $\RR^{n}$, i.e.,  $\sup_{\q \in \RR^{n}}\norm{\partial_{\q_{i_{1}}}\partial_{\q_{i_{2}}}\cdots\partial_{\q_{i_{k}}} \Ub(\q)} < \infty$ for any $k\in \NN$, and $i_{1},\dots,i_{k} \in \{1,\dots,n\}$. 
\end{enumerate}
\end{assumption}

Under the above stated assumptions exponential convergence of the associated semi-group and a central limit theorem for trajectory averages can be established:
\begin{proposition}[\cite{leimkuhler2017ergodic}]\label{prop:exp:conv}
Let \cref{as:gle,as:gle:kernel,as:gle:2} be satisfied. 
\begin{enumerate}[label=(\roman*)]
\item 
For any $l\in \mathbb{N}$, there exist constants $\kappa_{l}>0, C_{l}>0$ such that
\begin{equation*}\label{eq:L2:exp:conv}
\hspace{-.1in}
\forall t \geq 0,~ \forall \varphi \in L^{\infty}_{\Kc_{l}}, ~\norm*{{\rm e}^{t\Lcg}\varphi - \int \varphi \,\dd \muinv }_{L^{\infty}_{\Kc_{l}}} \leq C_{l}{\rm e}^{-t\kappa_{l} } \norm*{ \varphi - \int \varphi \,\dd \muinv }_{L^{\infty}_{\Kc_{l}}}.
\end{equation*}
\item Let $\overline{\varphi}_{t} :=  t^{-1}\int_{0}^{t}\varphi(\x(t)) \dd t$. If $\varphi \in L_{\Kc_{l}}^{\infty}(\xDomain)$ for some $l\in \NN$, then there is a finite $\sigma_{\varphi}^{2}>0$ so that 
\begin{equation}\label{eq:clt}
\sqrt{t} \left (\overline{\varphi}_{t}-  \int \varphi \, \dd \muinv \right ) \xrightarrow[t \to +\infty]{\mathrm{law}}  \mathcal{N}(0,\sigma_{\varphi}^{2}).
\end{equation}
\end{enumerate}
\end{proposition}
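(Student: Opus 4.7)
The plan is to follow the standard Meyn--Tweedie / Harris framework for hypoelliptic diffusions: combine a Lyapunov drift condition with a minorization on compact sets to get geometric ergodicity in weighted $L^{\infty}$, and then derive the CLT from a Poisson-equation / martingale-CLT argument. I will carry this out in three main steps, after which the two statements follow from a standard application of Hairer--Mattingly-type theorems.

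\textbf{Step 1 (Lyapunov drift).} I will build a Lyapunov function of the form $\Kc_l(\x) = \bigl(1 + H_{\rm ext}(\x) + \epsilon \, F(\q,\p,\s)\bigr)^{l}$, where $H_{\rm ext}(\q,\p,\s) = U(\q) + \tfrac{1}{2}\p^{\trans}\M^{-1}\p + \tfrac{1}{2}\s^{\trans}\Q^{-1}\s$ and $F$ is a suitably chosen small coupling term such as an appropriately rescaled $\q \cdot \p$. A direct computation using \cref{def:gle:generator} gives $\Lcg H_{\rm ext} = -(\p,\s)\Gammabf_{\M} \cdot (\M^{-1}\p,\Q^{-1}\s) + \tfrac{\beta^{-1}}{2}\operatorname{tr}(\Sigmabf\Sigmabf^{\trans}\operatorname{diag}(\M^{-1},\Q^{-1}))$, which together with the Lyapunov identity \eqref{eq:Lyapunov:Gamma:q} and the stability of $-\Gammabf_{\M}$ from Assumption \ref{as:gle}\,(ii) yields dissipation in the momentum/auxiliary variables but not in $\q$. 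Adding $\epsilon F$ produces a term involving $-\epsilon \nabla U(\q) \cdot (\text{something involving }\q)$; using the coercivity of $U$ implied by Assumption \ref{as:gle:2}\,(ii) (the quadratic part $\Ug$ dominates at infinity, with $\Ub$ and its derivatives bounded), one obtains $\Lcg \Kc_l \leq -c_l \Kc_l + d_l$ for $\epsilon$ sufficiently small, and iteratively for all $l \in \NN$.

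\textbf{Step 2 (Minorization).} Assumption \ref{as:gle:2}\,(i) ensures that $\partial_t - \Lcg$ is hypoelliptic, so by H\"ormander's theorem the transition kernel $P_t(\x,\dd \y)$ admits a smooth density $p_t(\x,\y)$. The accompanying bracket/span condition is exactly the algebraic controllability hypothesis needed for the Stroock--Varadhan support theorem, which in the present affine-drift / constant-diffusion setting implies that $p_t(\x,\cdot)$ has full support for any $t>0$, uniformly on compact sets. This yields a minorization condition $P_t(\x,\cdot) \geq \alpha \nu(\cdot)$ on any compact ``small'' set, for some probability measure $\nu$ and $\alpha>0$.

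\textbf{Step 3 (Ergodicity and CLT).} Combining Steps 1 and 2, the standard Harris/Meyn--Tweedie theorem (in the form of Hairer--Mattingly) yields existence of a unique invariant measure $\muinv$ and the exponential contraction in $\norm{\cdot}_{L^{\infty}_{\Kc_l}}$ stated in part~(i). For part~(ii), for $\varphi \in L^{\infty}_{\Kc_l}$ with zero $\muinv$-mean I define the resolvent $\psi(\x) = -\int_{0}^{\infty}\! (e^{t\Lcg}\varphi)(\x)\,\dd t$, which is well-defined and belongs to $L^{\infty}_{\Kc_l}$ by~(i), and which solves the Poisson equation $\Lcg \psi = \varphi$. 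Applying It\^o's formula along \eqref{eq:qgle} produces the decomposition $\int_{0}^{t}\varphi(\x(s))\,\dd s = \psi(\x(0)) - \psi(\x(t)) + M_{t}$, where $M_t$ is a square-integrable martingale with quadratic variation $\langle M \rangle_t = \beta^{-1} \int_0^t |\Sigmabf^{\trans} \nabla_\z \psi(\x(s))|^2 \,\dd s$. Since the boundary terms are $O(1)$ in $\sqrt{t}$ and ergodicity from~(i) gives $\langle M\rangle_t/t \to \sigma_\varphi^2 := \beta^{-1}\int |\Sigmabf^{\trans}\nabla_\z \psi|^2 \,\dd \muinv$ in probability, the martingale CLT yields \eqref{eq:clt}.

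The main obstacle is Step~1: the natural candidate $H_{\rm ext}$ is conserved by the Hamiltonian part and is dissipated only in the $\z=(\p,\s)$ directions, so it cannot by itself control the $\q$-growth. Constructing the coupling correction $\epsilon F$ and verifying the drift inequality uniformly in $\q$ despite the non-quadratic perturbation $\Ub$ is the technically delicate part; the gentle growth assumption on $\Ub$ in \cref{as:gle:2}\,(ii) is precisely what is needed to absorb the cross terms that $\epsilon F$ introduces without destroying the dissipation obtained from the stable matrix $-\Gammabf_{\M}$.
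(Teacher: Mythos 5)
The paper does not prove this proposition; it simply cites \cite{leimkuhler2017ergodic}. So there is no in-text proof to compare against, only the framework used in that reference (and echoed in the paper's own Appendix A proof of the \emph{discrete-time} analog, \cref{prop:ergodicity:discrete}). Against that yardstick, your overall architecture — Lyapunov drift, hypoellipticity/support-theorem minorization, Harris/Hairer--Mattingly convergence, and Poisson equation plus martingale CLT — is exactly the right framework and matches what the cited reference does.

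The genuine gap is in Step~1, and you have correctly flagged it as the delicate part, but the specific ansatz you propose does not survive inspection in the hard case, namely \cref{as:gle:kernel}\,\ref{as:gle:kernel:it:2}, where $\Gammabf_{1,1}=\0$. There the diffusion acts only on $\s$, and $\Lcg H_{\rm ext}$ dissipates only in $\s$; the $\p$-drift $-\nabla U - \Gammabf_{1,2}\s$ contains no direct friction on $\p$. Your correction $\epsilon\,\q\cdot\p$ produces $-\epsilon\,\nabla U(\q)\cdot\q$ (good, gives $-\epsilon\,\q^{\trans}\Omegabf\q$ up to lower order) but also $+\epsilon\,\p^{\trans}\M^{-1}\p$, which is a \emph{positive} term in $\p$ that nothing in $\Lcg(H_{\rm ext}+\epsilon\,\q\cdot\p)$ cancels: there is simply no $-c|\p|^{2}$ available. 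Dissipation must pass through the chain $\q\to\p\to\s$, so one needs at least a second cross term of the type $\epsilon_{2}\,\p\cdot\s$ to transfer the $\s$-dissipation onto $\p$, and then a balancing argument involving Young inequalities and the sign structure imposed by $\Gammabf_{1,2}\Q=-\Gammabf_{2,1}^{\trans}$. A cleaner and more robust route — and the one the paper actually uses for the discrete case in Appendix~\ref{subsec:lya} and which the cited reference uses in continuous time — is to take $\Kc_{l}(\x)=1+(\x^{\trans}\C\x)^{l}$ where $\C\succ 0$ solves a Lyapunov matrix inequality $\G^{\trans}\C+\C\G\succ 0$ for the linearized drift matrix $\G$; the existence of such $\C$ is guaranteed by Assumption~\ref{as:gle}\,\ref{as:gle:it:2} (stability of $-\Gammabf_{\M}$), and the nonlinear perturbation $\Ub$ is then absorbed using Assumption~\ref{as:gle:2}\,\ref{as:gle:it:4}. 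Your Steps~2 and~3 are fine modulo a sign on the martingale (harmless) and the unaddressed regularity needed to apply It\^o's formula to the Poisson solution $\psi$ (which hypoellipticity supplies, but should be said).
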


If not explicitly stated otherwise, we assume throughout the remainder of this article that the parameterization of \eqref{eq:qgle} is such that \cref{as:gle,as:gle:kernel,as:gle:2} are all satisfied.

\subsection{Limiting dynamics}\label{sec:limit:dyn}

The underdamped Langevin and overdamped Langevin dynamics can be obtained as limiting dynamics of the Markovian reformulation of the GLE. In what follows we briefly review two key results from \cite{Ottobre2011} and \cite{lim2017homogenization}, which we will later show to hold in slightly modified form for the discretized dynamics. For this purpose we consider the following rescaled process, which is obtained from \cref{eq:qgle} by a change of variable corresponding to a time rescaling as $(\q^{\lambda}(t),\p^{\lambda}(t),\s^{\lambda}(t))= (\q(\lambda t),\p(\lambda t),\s(\lambda t))$, with $\lambda >0$.

\begin{equation}\label{eq:GLE:mark:rescaled}\tag{QGLE-scaled}
\begin{aligned}
\dot{\q}^{\lambda} &= \lambda \p^{\lambda}, \\
\begin{pmatrix}
\dot{\p}^{\lambda} \\
\dot{\s}^{\lambda}
\end{pmatrix}
& =
\left [ 
\lambda
\begin{pmatrix}
-\nabla_{\q}U(\q^{\lambda})\\
\0
\end{pmatrix}
-\lambda\Gammabf^{\mu} 
\begin{pmatrix}
\p^{\lambda}\\
\s^{\lambda}
\end{pmatrix} \right ]
+\sqrt{\lambda\beta^{-1}}\Sigmabf^{\mu} \dot{\W}, 
\end{aligned}
\end{equation}
with $\s^{\lambda}(0) \sim \mathcal{N}(\0,\I_{n})$ and 
\begin{equation}\label{eq:Gamma:rescale}
\Gammabf^{\mu} = 
 \begin{pmatrix}
\0 & -\mu_{1}{\bm D}_{a}\\
\mu_{1}{\bm D}_{a}& \mu_{2}{\bm D}_{b}
\end{pmatrix}\in \RR^{2n\times2n},
\qquad
\Sigmabf^{\mu} = 
 \begin{pmatrix}
\0 &  \0 \\
\0 & \sqrt{2 \mu_{2}}{\bm D}_{b}
\end{pmatrix}\in \RR^{2n\times2n},
\end{equation}
where ${\bm D}_{a} := \diag(a_{1}, a_{2}, \dots a_{n})$ and ${\bm D}_{b} := \diag(b_{1}, b_{2}, \dots b_{n})$ are positive diagonal matrices. In the view of the stochastic integro-differential equation \eqref{eq:GLE:nonmark} the form of the matrices $\Gammabf^{\mu},\Sigmabf^{\mu}$ corresponds to a rescaling of the memory kernel $\K(t) =  \diag( a_{1}^{2} {\rm e}^{-t b_{1}},\dots, a_{n}^{2} {\rm e}^{-t b_{n}} )$ as $\K^{\mu}(t) = \mu_{1}^{2}\K(\mu_{2} t)$. Without loss of generality we assume $\M=\I_{n}$ here.\footnote{ Note that \eqref{eq:qgle} can always be transformed to a system with a isotropic mass matrix by applying a change of variable of the form $\q \mapsto \M^{1/2}\q $, $\p \mapsto \M^{-1/2}$ to the process resulting in modifications  as  $-\nabla U \mapsto -\M^{-1/2} \nabla U$, $\Gammabf \mapsto \diag(\M^{-1/2},\I_{m})\,\Gammabf\, \diag(\M^{-1/2},\I_{m})$, $\Sigmabf\mapsto \diag(\M^{-1/2},\I_{m})\Sigmabf$, of the force, friction matrix and diffusion matrix, respectively. }

For the parameter choice $ \lambda = 1, \mu_{1} =\epsilon^{-1}, \mu_{2}=\epsilon^{-2}$, the rescaled process \eqref{eq:Gamma:rescale} converges weakly to the solution of an underdamped Langevin equation as $\epsilon\rightarrow 0$ as detailed in the following proposition. 

\begin{proposition}[White noise limit, \cite{Ottobre2011}]\label{prop:wn:limit}
Let $ \lambda = 1, \mu_{1} =\epsilon^{-1}, \mu_{2}=\epsilon^{-2}$. 
 For finite $T>0$ we have 
\[
(\q^{\lambda}(t),\p^{\lambda}(t))  \xrightarrow[\varepsilon \to 0]{\mathrm{law}}  (\q(t),\p(t)),
\]
uniformly in $t\in [0,T]$, where $(\q(t),\p(t))_{t\geq 0}$ denotes the solution process of the underdamped Langevin equation \eqref{eq:LD} with $\GammaLD = {\bm D}_{a}^{2}{\bm D}_{b}^{-1}$, $\M=\I_{n}$, and initial values $(\q(0),\p(0))= (\q^{\lambda}(0),\p^{\lambda}(0))$.
\end{proposition}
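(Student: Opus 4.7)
The plan is to prove this via the homogenization / corrector (perturbed test-function) framework of Papanicolaou--Stroock--Varadhan, which is precisely the approach adapted to the GLE setting in the cited source \cite{Ottobre2011}. With the choice $\lambda = 1$, $\mu_{1} = \epsilon^{-1}$, $\mu_{2} = \epsilon^{-2}$, $\M = \I_{n}$, the rescaled system \eqref{eq:GLE:mark:rescaled} has slow variables $(\q^{\lambda},\p^{\lambda})$ evolving at $O(1)$ speed and a fast auxiliary variable $\s^{\lambda}$ relaxing on timescale $\epsilon^{2}$, coupled into the slow dynamics through the term $\epsilon^{-1}{\bm D}_{a}\,\s^{\lambda}$. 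The generator accordingly decomposes as $\Lc^{\epsilon} = \Lc_{0} + \epsilon^{-1}\Lc_{1} + \epsilon^{-2}\Lc_{2}$, in which $\Lc_{0}$ is the Hamiltonian generator in $(\q,\p)$, $\Lc_{1}$ is the (antisymmetric) slow--fast coupling involving $\s$, and $\Lc_{2}$ is an Ornstein--Uhlenbeck operator acting in $\s$ alone. The OU operator $\Lc_{2}$ admits a unique Gaussian invariant density $\rho_{\infty}$ and satisfies a Poincar\'e inequality, so its pseudo-inverse is a bounded operator on zero-mean $L^{2}(\rho_{\infty})$ functions.

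For each test function $\varphi\in\testfunc$ depending only on $(\q,\p)$, I would first verify the centering condition $\int\Lc_{1}\varphi\,\rho_{\infty}\,\dd\s = 0$, which holds because $\s$ has zero mean under $\rho_{\infty}$. Then the cell problem $\Lc_{2}\chi_{1} = -\Lc_{1}\varphi$ can be solved explicitly: since $\Lc_{1}\varphi$ is linear in $\s$, the corrector $\chi_{1}(\q,\p,\s)$ can be taken linear in $\s$ with coefficients of the form $A\s\cdot\nabla_{\p}\varphi$ for an appropriate matrix $A$ constructed from ${\bm D}_{a},{\bm D}_{b}$. Taking the $\rho_{\infty}$-average of $\Lc_{1}\chi_{1}$ and using the stationary covariance of the OU process yields the effective generator
\begin{equation*}
\bar\Lc\,\varphi \;=\; \Lc_{0}\varphi + \int \Lc_{1}\chi_{1}\,\rho_{\infty}\,\dd\s,
\end{equation*}
which, consistently with the memory-kernel representation \eqref{eq:auto-cov} giving $\int_{0}^{\infty}\K(t)\,\dd t = {\bm D}_{a}^{2}{\bm D}_{b}^{-1}$, identifies with the Langevin generator $\LcLD$ of \eqref{eq:LD} for the friction matrix $\GammaLD = {\bm D}_{a}^{2}{\bm D}_{b}^{-1}$.

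To upgrade this formal calculation to convergence in law in $C([0,T],\RR^{2n})$, two ingredients remain. First, tightness of $\{(\q^{\lambda},\p^{\lambda})\}_{\epsilon>0}$ can be established by applying It\^o to the Lyapunov function $\Psi(\q,\p,\s) = H(\q,\p) + \tfrac{1}{2}\s^{\trans}\Q^{-1}\s$ of \cref{prop:GLE:cons} and exploiting the dissipation contributed by the $\Lc_{2}$-block to get $\epsilon$-uniform moment bounds on $(\q^{\lambda},\p^{\lambda},\s^{\lambda})$, together with a Kolmogorov--Chentsov argument to obtain equicontinuity of the slow coordinates. Second, the limit is identified by applying It\^o to the perturbed test function $\varphi(\q^{\lambda},\p^{\lambda}) + \epsilon\chi_{1} + \epsilon^{2}\chi_{2}$ --- with $\chi_{2}$ solving a second Poisson equation $\Lc_{2}\chi_{2} = \bar\Lc\varphi - \Lc_{0}\varphi - \Lc_{1}\chi_{1}$ --- and showing that this process is an asymptotic martingale for $\bar\Lc$. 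Uniqueness of the martingale problem for the non-degenerate hypoelliptic diffusion \eqref{eq:LD} then picks out a unique limit point, giving convergence in law uniformly on $[0,T]$.

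The main obstacle is the control of the $O(\epsilon)$ and $O(\epsilon^{2})$ remainder terms arising in the expansion of $\Lc^{\epsilon}(\varphi + \epsilon\chi_{1} + \epsilon^{2}\chi_{2})$, in particular the cross-terms $\epsilon\Lc_{0}\chi_{1}$ and $\epsilon\Lc_{1}\chi_{2}$ and the quadratic variations contributed by the correctors, which must vanish in expectation as $\epsilon\to 0$. This relies on polynomial-growth bounds on $\chi_{1},\chi_{2}$ combined with the $\epsilon$-uniform moment bounds from the tightness step, and on the spectral gap of $\Lc_{2}$ in $L^{2}(\rho_{\infty})$. A secondary technicality is the mismatch between the prescribed initial law $\s^{\lambda}(0)\sim\mathcal{N}(\0,\I_{n})$ and the invariant measure of the fast dynamics, which produces an initial transient of duration $O(\epsilon^{2})$ that must be absorbed separately into the error bounds.
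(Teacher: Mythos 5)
The paper itself offers no proof of this proposition---it simply cites \cite{Ottobre2011}---and your sketch correctly outlines the perturbed-test-function (corrector) argument used in that reference: the generator decomposition $\Lc^{\varepsilon}=\Lc_{0}+\varepsilon^{-1}\Lc_{1}+\varepsilon^{-2}\Lc_{2}$, the centering condition and cell problem, identification of the effective friction $\GammaLD={\bm D}_{a}^{2}{\bm D}_{b}^{-1}$ via the OU stationary covariance (consistent with $\int_{0}^{\infty}\K(t)\,\dd t$), tightness from a Lyapunov/It\^{o} bound, and limit identification via the martingale problem with higher-order correctors. One small slip: the limiting diffusion \eqref{eq:LD} is degenerate (the noise acts only on $\p$), not non-degenerate; it is, however, hypoelliptic, so well-posedness of its martingale problem still holds and your conclusion is unaffected.
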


Similarly, when considering the scaling  $\lambda = \mu_{1} =\mu_{2}=\epsilon^{-1}$ one can show that the solution of \eqref{eq:GLE:mark:rescaled} converges weakly to the solution of an overdamped Langevin equation of the form
\begin{equation}\label{eq:OLD}\tag{BD}
\dot{\q} = -\Lambda \nabla U(\q) + \sqrt{2\beta^{-1}}\Lambda^{1/2} \dot{\W}.
\end{equation}

\begin{proposition}[Overdamped limit]\label{prop:od:limit}
Let $\lambda = \mu_{1} =\mu_{2}=\epsilon^{-1}$. $
\s^{\lambda}(0) \sim \mathcal{N}(\0,\I_{n})$, and $(\q^{\lambda}(0), \p^{\lambda}(0))\in \RR^{2n}$. For finite $T>0$ we have 
\[
\q^{\lambda}(t)  \xrightarrow[\varepsilon \to 0]{\mathrm{law}} \q(t),
\]
uniformly in $t\in [0,T]$, where $(\q(t))_{t\geq 0}$ denotes the solution process of the overdamped Langevin equation \eqref{eq:OLD} with $\Lambda = \left ({\bm D}_{a}^{2}{\bm D}_{b}^{-1}\right)^{-1}$.
\end{proposition}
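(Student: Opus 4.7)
The plan is to apply standard multi-scale homogenization at the level of generators, as in \cite{lim2017homogenization}. Inserting $\lambda = \mu_{1} = \mu_{2} = \epsilon^{-1}$ into the generator of \eqref{eq:GLE:mark:rescaled} yields a clean separation by powers of $\epsilon$,
\begin{equation*}
\Lc^{\epsilon} = \epsilon^{-2}\Lc_{0} + \epsilon^{-1}\Lc_{1},
\end{equation*}
where $\Lc_{1} = \p\cdot\nabla_{\q} - \nabla U(\q)\cdot\nabla_{\p}$ and $\Lc_{0}$ is the generator of a linear hypoelliptic Ornstein--Uhlenbeck process on $(\p,\s)$ with $\q$ frozen. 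The sub-system generated by $\Lc_{0}$ matches \cref{as:gle:kernel}\,\ref{as:gle:kernel:it:2} with $\Q = \I_{n}$, so by \cref{prop:GLE:cons} its unique invariant measure is the centered Gaussian $\mu_{0} = \mathcal{N}(\0,\beta^{-1}\I_{2n})$, and decay to equilibrium is exponential at a rate independent of $\q$.

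Next I would formally expand $u^{\epsilon} = u_{0} + \epsilon u_{1} + \epsilon^{2} u_{2} + O(\epsilon^{3})$ in the backward equation $\partial_{t}u^{\epsilon} = \Lc^{\epsilon}u^{\epsilon}$ and collect powers of $\epsilon$. The $\epsilon^{-2}$ balance forces $u_{0} = u_{0}(\q,t)$; the $\epsilon^{-1}$ balance is the cell problem $\Lc_{0}u_{1} = -\p\cdot\nabla_{\q}u_{0}$, which admits an explicit linear solution in $(\p,\s)$ thanks to the identities $\Lc_{0}p_{i} = a_{i}s_{i}$ and $\Lc_{0}s_{i} = -a_{i}p_{i}-b_{i}s_{i}$ (the ansatz $u_{1} = \sum_{i}\bigl(\tfrac{b_{i}}{a_{i}^{2}}p_{i} + \tfrac{1}{a_{i}}s_{i}\bigr)\partial_{q_{i}}u_{0}$ works by direct substitution); finally, the $\epsilon^{0}$ balance $\Lc_{0}u_{2} = \partial_{t}u_{0} - \Lc_{1}u_{1}$ admits a solution only under the Fredholm compatibility condition $\partial_{t}u_{0} = \int \Lc_{1}u_{1}\,d\mu_{0}$. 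Evaluating this Gaussian integral using $\EE_{\mu_{0}}[p_{i}p_{j}] = \beta^{-1}\delta_{ij}$ and $\EE_{\mu_{0}}[p_{i}s_{j}] = 0$ identifies the effective generator
\begin{equation*}
\Lc^{\rm eff}u_{0} = -\Lambda\nabla U(\q)\cdot\nabla u_{0} + \beta^{-1}\Lambda:\nabla^{2}u_{0}, \qquad \Lambda = ({\bm D}_{a}^{2}{\bm D}_{b}^{-1})^{-1},
\end{equation*}
which is exactly the generator of \eqref{eq:OLD} for the claimed diffusion matrix.

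To promote this formal expansion to rigorous weak convergence on $C([0,T];\RR^{n})$, I would use the perturbed test function method of Kurtz (Ethier--Kurtz). For a smooth test function $u_{0}$ set $\varphi^{\epsilon} = u_{0} + \epsilon u_{1} + \epsilon^{2} u_{2}$ and show that
\begin{equation*}
M_{t}^{\epsilon} := \varphi^{\epsilon}(\x^{\lambda}_{t}) - \varphi^{\epsilon}(\x^{\lambda}_{0}) - \int_{0}^{t}\Lc^{\rm eff}u_{0}(\q^{\lambda}_{s})\,ds
\end{equation*}
is a martingale modulo an $O(\epsilon)$ correction. Combining this with tightness of $\{\q^{\lambda}\}_{\epsilon>0}$ in $C([0,T];\RR^{n})$ and uniqueness of the martingale problem for $\Lc^{\rm eff}$ (standard under the confinement on $U$ from \cref{as:gle:2}\,\ref{as:gle:it:4}) identifies the unique weak limit as the solution of \eqref{eq:OLD}. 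The off-equilibrium initial data $\s^{\lambda}(0) \sim \mathcal{N}(\0,\I_{n})$ relaxes to $\mu_{0}$ on the fast $O(\epsilon^{2})$ time scale and thus only produces an initial layer of vanishing duration, which does not affect the limit.

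The main technical obstacle is obtaining uniform moment bounds on $(\p^{\lambda}_{t},\s^{\lambda}_{t})$ on $[0,T]$ needed to control the remainder in $M_{t}^{\epsilon}$ uniformly in $\epsilon \in (0,1]$. These bounds come from combining the hypoelliptic dissipativity of $\Lc_{0}$ (acting on the fast scale $\epsilon^{-2}$) with the growth control on $\nabla U$ furnished by \cref{as:gle:2}\,\ref{as:gle:it:4}, which absorbs the slow $\q$-transport. Once such bounds are in hand, the remaining steps reduce to standard multiscale convergence arguments and direct Gaussian integration.
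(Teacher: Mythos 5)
Your derivation is correct but takes a genuinely different route from the paper. The paper disposes of this proposition in two lines by invoking Theorem IV.1 of \cite{lim2017homogenization}, a general homogenization theorem for state-dependent GLEs that covers this scaling as a special case (with a further pointer to \cite{schuss2010diffusion} for a more specialized treatment). You instead carry out the multi-scale expansion from scratch: identifying the generator decomposition $\Lc^{\epsilon} = \epsilon^{-2}\Lc_{0} + \epsilon^{-1}\Lc_{1}$, solving the cell problem explicitly via the ansatz $u_{1} = \sum_{i}\bigl(b_{i}a_{i}^{-2}p_{i} + a_{i}^{-1}s_{i}\bigr)\partial_{q_{i}}u_{0}$ (which does satisfy $\Lc_{0}u_{1} = -\p\cdot\nabla_{\q}u_{0}$ given $\Lc_{0}p_{i}=a_{i}s_{i}$ and $\Lc_{0}s_{i}=-a_{i}p_{i}-b_{i}s_{i}$), and reading off $\Lambda = {\bm D}_{a}^{-2}{\bm D}_{b} = ({\bm D}_{a}^{2}{\bm D}_{b}^{-1})^{-1}$ from the Fredholm solvability condition, which matches the claimed diffusion tensor. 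The payoff of your approach is transparency and self-containment --- the reader sees exactly where the effective generator and $\Lambda$ come from --- but it transfers the technical burden (uniform-in-$\epsilon$ moment bounds on the fast variables, tightness of $\{\q^{\lambda}\}$ in $C([0,T];\RR^{n})$, well-posedness of the limiting martingale problem) onto the proof, and you only sketch these closing steps. The cited theorem bundles those verifications into its hypotheses, which is precisely why the paper can reduce the proof to a reference. One small precision worth flagging: \cref{prop:GLE:cons} only asserts that the indicated Gaussian is \emph{conserved} by the frozen-$\q$ Ornstein--Uhlenbeck dynamics; its \emph{uniqueness} as an invariant measure (needed for the Fredholm alternative) comes from the hypoellipticity and drift-stability of $\Lc_{0}$, which you implicitly use and which is guaranteed under \cref{as:gle:2}\,\ref{as:gle:it:3}.
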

\begin{proof}
This result is a direct consequence of Theorem IV.1 in \cite{lim2017homogenization}. The same limit has been also studied in a less general setup in \cite{schuss2010diffusion}.
\end{proof}

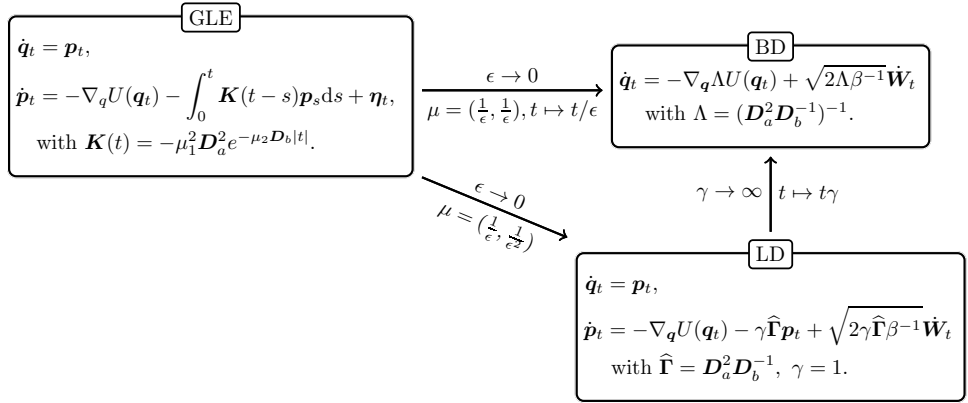
\begin{figure}
\begin{tikzpicture}[thick,scale=0.8, every node/.style={scale=0.8}]
    \node (GLE) [align=left] {\begin{eqtbox}[%
        GLE,
                width=.52\textwidth
    ]
    $\begin{aligned}
   \dot{\q}_{t} &= \p_{t},\\ 
    \dot{\p}_{t} &= -\nabla_{q} U(\q_{t}) - \int_{0}^{t}\K(t-s)\p_{s} {\rm d} s  + \etabf_{t},\\
    &\text{with }  \K(t) =  - \mu_{1}^{2}{\bm D}_{a}^{2} e^{- \mu_{2}{\bm D}_{b}|t|}.
    \end{aligned}$
    \end{eqtbox}};
     \node (BD) [right=2.4cm of GLE,align=left]{\begin{eqtbox}[%
         BD,
                width=.41\textwidth
    ]
     $   \begin{aligned}
   \dot{\q}_{t} &= -\nabla_{\q}\Lambda U(\q_{t})+ \sqrt{2 \Lambda\beta^{-1}} \dot{ \W}_{t}\\
   &\text{ with } \Lambda =  (  {\bm D}_{a}^{2}{\bm D}_{b}^{-1} )^{-1}.
      \end{aligned}$
      
    \end{eqtbox}};
    \node (LD)  [below=1cm of BD,align=right] {\begin{eqtbox}[%
        LD,
                width=.5\textwidth
    ]
     $  \begin{aligned}
   \dot{ \q}_{t} &=  \p_{t},\\
   \dot{\p}_{t} &= -\nabla_{\q} U(\q_{t}) - \gamma \GammaLD \p_{t} + \sqrt{2\gamma\GammaLD{\beta}^{-1}}\dot{\W}_{t}\\
   &\text{ with } \GammaLD = {\bm D}_{a}^{2}{\bm D}_{b}^{-1},~ \gamma =1.
      \end{aligned}$
    \end{eqtbox}};
   \draw[->] (GLE)--(BD) node [midway,above,sloped] {$\epsilon \rightarrow 0$};
   \draw[->] (GLE)--(BD) node [midway,below,sloped] {$\mu=(\frac{1}{\epsilon},\frac{1}{\epsilon}), t \mapsto t/\epsilon$};
\draw[->] (LD)--(BD) node [midway,right]  {$t \mapsto t \gamma $}; 
\draw[->] (LD)--(BD) node [midway,left] {$\gamma \rightarrow \infty$};
\draw[->] (GLE)--(LD) node [midway,above,sloped] {$\epsilon \rightarrow 0$};
\draw[->] (GLE)--(LD) node [midway,below,sloped]{$\mu=(\frac{1}{\epsilon},\frac{1}{\epsilon^{2}})$};  
\end{tikzpicture}
\caption{Diagram of Langevin limits. The overdamped limit  of the GLE (GLE $\rightarrow$ BD) is detailed in \cref{prop:od:limit}. The white noise limit (GLE $\rightarrow$ LD) is detailed in \cref{prop:wn:limit}. The overdamped limit of the underdamped Langevin equation (LD $\rightarrow$ BD) is a well known result in the literature (see, e.g.,\cite{Pavliotis2014}).  } 
\end{figure}

\section{Symmetric stochastic splitting methods for the QGLE}\label{sec:numint}
In this section we present the basic construction and implementation of the class of proposed stochastic splitting schemes for the Markovian reformulation  \cref{eq:qgle} of the GLE as well as an elementary analysis of the incurred weak error. 

\subsection{Construction of numerical methods based on splitting}
As mentioned in the introduction, we construct  splitting schemes using a similar procedure as that employed  for the underdamped Langevin equation in \cite{Leimkuhler2013a,LeMaSt2015}. 
Such schemes are based on a decomposition of the generator of the underdamped Langevin equation as 
$
\LcLD = \Lca + \Lcb+ \LcLDo,
$
where $\Lca= -\nabla U(\q)\cdot \nabla_{\p}$, $\Lcb= -\p \cdot \M^{-1}\nabla_{\q}$, and $\LcLDo = -\GammaLD  \M^{-1}\cdot \nabla \p +\beta^{-1}\GammaLD   : \nabla^{2}_{\p}$. By applying a Strang splitting with stepsize $\deltat>0$ twice (typically first to treat the Liouville operator $\Lca + \Lcb$ associated with the Hamiltonian vector field, and then subsequently to compute the combination of that operator with the term $\Lco$), a symmetric stochastic splitting scheme with associated evolution operator is obtained:
\[
\hat{\Pc}^{{\textup{ld-OBABO}}}_{\deltat} = \exp \left ( (\deltat/2) \LcLDo\right ) \exp\left ( (\deltat/2) \Lcb\right )\exp\left ( \deltat \Lca\right )\exp\left ( (\deltat/2)  \Lcb\right )\exp\left ( (\deltat/2)  \LcLDo\right ).
\] 
Similarly, by either changing the ordering within the Strang splitting or by changing the pair of operators selected for the first application of the Strang splitting, other splitting schemes can be obtained which are uniquely identified by palindromes of the form ${\rm XYZYX}$ where ${\rm X,Y,Z}\in \{ {\rm A}, {\rm B}, \widehat{\rm O} \} $ are distinct placeholders.   (The symmetry of this decomposition is not essential but typically improves the accuracy and efficiency of the resulting scheme with little added computational cost.)
 
This construction can be easily generalized to the Markovian reformulation of the GLE \cref{eq:qgle} by using the fact that \eqref{eq:qgle} structurally resembles the underdamped Langevin equation \eqref{eq:LD}. That is, we consider a decomposition of $\Lcg$ as $\Lcg = \Lca + \Lcb+ \Lco,
$
where $\Lca,\Lcb$ are defined as above and 
\[
\Lco = - \Gammabf\begin{pmatrix} \M^{-1}\p \\ \s\end{pmatrix} \cdot \nabla_{\z} + \frac{\beta^{-1}}{2} \Sigmabf \Sigmabf^{\trans}  : \nabla^{2}_{\z}.
\]
The only difference between this decomposition and the decomposition of the operator $\LcLD$ is that $\Lco$ corresponds to the generator of a linear SDE in $\p$ and $\s$, whereas  the operator $\LcLDo$ in the otherwise identical decomposition of $\LcLD$ is the generator of a linear SDE in $\p$ only. Thus, symmetric splitting schemes for \eqref{eq:qgle} can be constructed in the same way as for the underdamped Langevin equation resulting in numerical integrators with associated evolution operators of the form 
\begin{equation}\label{eq:propagator}
\hat{\Pc}^{{\textrm{gle-XYZYX}}}_{\deltat} = \exp \left ( \frac{\deltat}{2} \Lc_{{\rm X}}\right ) \exp\left ( \frac{\deltat}{2} \Lc_{{\rm Y}}\right )\exp\left ( \deltat \Lc_{{\rm Z}}\right )\exp\left ( \frac{\deltat}{2}  \Lc_{{\rm Y}}\right )\exp\left ( \frac{\deltat}{2}  \Lc_{{\rm X}}\right ),
\end{equation}
where ${\rm X,Y,Z}\in \{ {\rm A}, {\rm B}, {\rm O} \} $ are again distinct placeholders.

\subsection{Implementation}\label{sub:splitting:implentation}
By construction the numerical integrator for the associated evolution operator $\hat{\Pc}^{{\textrm{gle-XYZYX}}}_{\deltat}$ is of the form 
\begin{equation}
\hat{\Phi}^{{\textrm{gle-XYZYX}}}_{\deltat} = \Phi^{\rm X}_{\deltat/2} \circ \Phi^{\rm Y}_{\deltat/2} \circ \Phi^{\rm Z}_{\deltat} \circ \Phi^{\rm Y}_{\deltat/2} \circ \Phi^{\rm X}_{\deltat/2},
\end{equation}
where $\Phi^{\rm X}_{\deltat}, {\rm X} \in \{{\rm A},{\rm B},{\rm O}\}$ are the solutions maps of the differential equations associated with the operators $\Lc_{{\rm X}}, {\rm X} \in \{{\rm A},{\rm B},{\rm O}\}$, respectively. A practical implementation of the above-described splitting schemes therefore requires that each differential equation associated with the operators $\Lca,\Lcb$ and $\Lco$ can be solved exactly. Indeed, in the case of the operators $\Lca$ and $\Lcb$ the solution of the associated differential equations $\dot{\q}= \M^{-1}\p$, and $\dot{\p}= -\nabla U(\q)$ correspond to Euler updates of the form 
\begin{equation}
\Phi^{\rm A}_{\deltat}: (\q,\p,\s)  \mapsto  (\q + \deltat \M^{-1} \p, \p, \s), ~~\Phi^{\rm B}_{\deltat}: (\q,\p,\s)  \mapsto  (\q, \p - \deltat\nabla_{\q} U(\q), \s),
\end{equation}
respectively. The solution of the SDE associated with the operator $\Lco$,
\begin{equation}
\dot{\z}
 =
-\Gammabf_{\M}\z
+\sqrt{\beta^{-1}}\Sigmabf \dot{\W}, \\
\end{equation}
coincides in law with $\z(\deltat) =   \F\z(0) + \S \Rand$, (see e.g. \cite{Gardiner,Pavliotis2014}) where $\Rand \sim \mathcal{N}( {\bm 0}, \I_{n+m})$ denotes a vector of independent and standard normal distributed random variables in $\RR^{n+m}$, $\F = \exp(-\deltat\Gammabf_{\M})$ is the matrix exponential of the matrix $-\deltat\Gammabf_{\M}$, and $\S$ solves the equation 
\[
\S\S^{\trans} = \beta^{-1} \left [ \begin{pmatrix} \M & \0 \\ \0 & \Q \end{pmatrix} - \F\begin{pmatrix} \M & \0 \\ \0 & \Q \end{pmatrix} \F^{\trans} \right ].  
\]
The corresponding stochastic flow map which updates the combined state vector $\x$ accordingly, is of the form 
\[
\Phi^{{\rm O}}_{\deltat}:  (\q,\z) \mapsto  \left (\q, \F\; \z + \S \Rand \right ),
\]
where $\Rand$ is independently resampled at every application of $\Phi^{{\rm O}}_{\deltat}$. With the definition of the updates  $\Phi^{{\rm A}}_{\deltat}, \Phi^{{\rm B}}_{\deltat}, \Phi^{{\rm O}}_{\deltat}$ at hand, one can find explicit algorithmic forms for the integration map $\hat{\Phi}^{{\textrm{gle-XYZYX}}}_{\deltat}$. We provide an algorithmic implementation of $\hat{\Phi}^{{\textrm{gle-BAOAB}}}_{\deltat}$ in  {\em Algorithm} \ref{alg:gle:baoab}.

\begin{algorithm2e}
\caption{ gle-BAOAB}
\label{alg:gle:baoab}
\SetKwData{Left}{left}\SetKwData{This}{this}\SetKwData{Up}{up}
\SetKwFunction{Union}{Union}\SetKwFunction{FindCompress}{FindCompress}
\SetKwInOut{Input}{input}
\SetKwInOut{Output}{output}
\Input{$(\q,\p,\s)$}
$\p \gets  \p -  \frac{\deltat}{2} \nabla U(\q)$\;
$\q \gets \q +  \frac{\deltat}{2} \M^{-1}\p$\;
$\begin{bmatrix}\p \\ \s \end{bmatrix} \gets \F \begin{bmatrix}\p \\ \s \end{bmatrix} + \S \Rand$\;
$\q \gets \q + \frac{\deltat}{2} \M^{-1}\p$\;
$\p \gets  \p - \frac{\deltat}{2}\nabla U(\q)$\;
\Output{$(\q,\p,\s)$}
\BlankLine
\end{algorithm2e}

\subsection{Order of convergence of weak error}\label{sec:weak:error}
A numerical scheme with associated evolution operator $\Pc_{\deltat}$ is said to have global weak order $p$ when applied to \eqref{eq:qgle} with $\x(0)\sim \muinv_{0}$ if, for all $\varphi \in \mathcal{C}^{\infty}_{P}(\xDomain,\RR)$, and for all $T>0$, there exists a constant $C(T,\varphi)>0$, such that
\[
\abs*{ \left ( \Pc_{\deltat}^{n}\varphi \right )(x) - ({\rm e}^{ n \deltat \Lcg } \varphi ) (x) } \leq \deltat^{p}C(T,\varphi),
\]
for all $t_{n}=n\deltat \in [0,T]$, for $\muinv_{0}$-almost all $x$, and for all sufficiently small $\deltat>0$. Here, as well as in sequel, $\Pc^{n} = \Pc^{n-1} \Pc$ with $\Pc^{0}={\rm Id}$, denotes the $n$-th power of the evolution operator $\Pc$. The above 
discretization schemes all have weak order $p=2$:
\begin{proposition}
Let \cref{as:gle:2} be satisfied. Then, any symmetric stochastic splitting schemes with evolution operator $\hat{\Pc}^{{\textnormal{gle-XYZYX}}}_{\deltat}$ has global weak order 2.
\end{proposition}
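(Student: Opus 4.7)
\medskip

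\noindent\textbf{Proof proposal.} My plan is to follow the standard route for symmetric splitting schemes: establish an $O(\deltat^3)$ local weak error via a Taylor/BCH expansion of the composed propagator, then lift this to an $O(\deltat^2)$ global weak error by telescoping and uniform-in-$\deltat$ moment bounds on the numerical iterates. \Cref{as:gle:2}\,\ref{as:gle:it:4}---the quadratic-plus-bounded-derivative decomposition of $U$---is what makes both halves of this argument work for test functions in $\testfunc$.

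First I would fix $\varphi \in \mathcal{C}^{p}_{P}(\xDomain,\RR)$ for sufficiently large $p$ and expand each of the five factors in \cref{eq:propagator} as $\exp(t\Lc_{\mathrm X}) = \sum_{k=0}^{K} t^k \Lc_{\mathrm X}^k/k! + R_{\mathrm X}(t)$ on $\testfunc$, noting that $\Lca,\Lcb,\Lco$ all map $\mathcal{C}^{k}_{P}$ into $\mathcal{C}^{k-2}_{P}$ under \cref{as:gle:2}\,\ref{as:gle:it:4} (the at-most-linear growth of $\nabla U$ and of the drift in $\Lco$, together with the constant-coefficient second-order part of $\Lco$, preserve polynomial growth). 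Composing the five expansions up to order $\deltat^3$ and collecting terms yields
\[
\hat{\Pc}^{\mathrm{gle\text{-}XYZYX}}_{\deltat}\varphi = \varphi + \deltat\,\Lcg\varphi + \tfrac{\deltat^{2}}{2}\Lcg^{2}\varphi + \deltat^{3}\,\Psi(\varphi) + O(\deltat^{4}),
\]
with $\Psi(\varphi) \in \mathcal{C}_P(\xDomain,\RR)$ a fixed polynomial-growth expression in iterated commutators of $\Lca,\Lcb,\Lco$. The $\deltat^{0}$, $\deltat^{1}$, $\deltat^{2}$ terms match those of $\exp(\deltat\Lcg)\varphi$ automatically (the $\deltat^{2}$ coincidence is exactly where the palindromic symmetry of XYZYX is used: odd-in-$\deltat$ commutator contributions cancel in pairs). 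Subtracting the Taylor expansion of $\exp(\deltat\Lcg)\varphi$ and bounding the remainder in an appropriate weighted norm $L^{\infty}_{\Kc_{l}}$ then gives the local weak error estimate
\[
\bigl\lvert \hat{\Pc}^{\mathrm{gle\text{-}XYZYX}}_{\deltat}\varphi(x) - {\rm e}^{\deltat \Lcg}\varphi(x) \bigr\rvert \leq \deltat^{3}\, C_{1}(\varphi)\,\Kc_{l}(x),
\]
for some $l$ depending on $\varphi$.

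Second, I would pass to the global error by the Talay--Tubaro-style telescoping
\[
\bigl(\hat{\Pc}_{\deltat}^{n} - {\rm e}^{n\deltat\Lcg}\bigr)\varphi = \sum_{k=0}^{n-1} \hat{\Pc}_{\deltat}^{k}\bigl(\hat{\Pc}_{\deltat} - {\rm e}^{\deltat\Lcg}\bigr){\rm e}^{(n-k-1)\deltat\Lcg}\varphi.
\]
To close this I need two auxiliary facts: (a) for each $k$, the map $\varphi \mapsto {\rm e}^{k\deltat\Lcg}\varphi$ sends $\mathcal{C}^{p}_{P}$ into itself with polynomially growing derivatives on $[0,T]$ (standard smoothness/regularity for hypoelliptic diffusions under \cref{as:gle:2}\,\ref{as:gle:it:3}), and (b) $\hat{\Pc}_{\deltat}^{k}\Kc_{l}(x) \leq C_{2} \Kc_{l}(x)$ uniformly for $k\deltat \leq T$ and all small $\deltat$. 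Applying the local bound from step one inside the telescope with $\tilde\varphi_{k}:={\rm e}^{(n-k-1)\deltat\Lcg}\varphi$ then gives $|\hat{\Pc}_{\deltat}^{n}\varphi(x) - {\rm e}^{n\deltat\Lcg}\varphi(x)| \leq n\deltat^{3} \cdot C \leq T\deltat^{2}\, C(T,\varphi)\Kc_{l}(x)$, as required.

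The main obstacle is establishing the uniform-in-$\deltat$ Lyapunov bound (b) on the numerical iterates. The $\Phi^{\mathrm A}_{\deltat}$ and $\Phi^{\mathrm B}_{\deltat}$ substeps are explicit Euler-like updates in $(\q,\p)$ and, absent any dissipation, iterating them on a generic potential could blow up moments; this is precisely where the decomposition $U=\Ug+\Ub$ with $\Ug$ harmonic and $\nabla^{k}\Ub$ bounded is essential. I would build a Lyapunov function of the form $\Kc(\q,\p,\s) = 1 + a H(\q,\p) + \frac{b}{2}\s^{\trans}\Q^{-1}\s + c\,\q^{\trans}\p$ (or a polynomial thereof) and, using the explicit form of $\F$ and $\S$ (whose exponential contraction is guaranteed by \cref{as:gle}\,\ref{as:gle:it:2} of the earlier assumption) together with the harmonic-plus-bounded structure of $U$, verify a discrete drift condition $\EE[\Kc(\Phi^{\mathrm{gle\text{-}XYZYX}}_{\deltat}(x))] \leq (1-\alpha\deltat)\Kc(x) + \beta\deltat$ for small $\deltat$. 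Iterating yields the required uniform bound on $\Kc_{l}$ for every $l$, completing the argument.
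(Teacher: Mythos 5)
Your proposal is correct and its two halves line up closely with the paper's argument. For the local error, the paper also Taylor-expands the palindromic propagator and identifies an $O(\deltat^{3})$ leading term (explicitly written out as a double-commutator expression $\Acal_{3}$, \cref{eq:ac2}), and likewise uses the quadratic-plus-bounded structure of $U$ in \cref{as:gle:2}\,\ref{as:gle:it:4} to keep $\Acal_{3}\varphi$ and the remainder in $\testfunc$. Where you diverge is in the passage from local to global: the paper simply cites \cite[Theorem 2]{mil1986weak}, which packages exactly the three ingredients you reconstruct by hand --- local weak order 3, uniform-in-$\deltat$ moment bounds on the numerical iterates, and regularity of $\varphi\mapsto e^{t\Lcg}\varphi$ --- into a single black-box statement that local order implies global order. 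Your explicit Talay--Tubaro telescoping is the more self-contained route and makes transparent why the moment bound and semigroup regularity are needed, at the cost of rederiving a standard result. Your Lyapunov-function sketch for the discrete moment bound is of the same type as what the paper actually uses (a quadratic form $\Kc_{l}(\x)=(\x^{\trans}\C\x)^{l}+1$ with $\C$ chosen so that $\G^{\trans}\C+\C\G$ is positive definite; cf.\ \cref{subsec:lya}); the paper points to that construction rather than introducing a Hamiltonian-plus-cross-term function, but either works and both rely on \cref{as:gle}\,\ref{as:gle:it:2} for the contraction in the $\Phi^{\mathrm O}$ substep and on $U=\Ug+\Ub$ for controlling the $\Phi^{\mathrm B}$ substep.
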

\begin{proof}
This result is a direct consequence of Theorem 2 in \cite{mil1986weak} which provides a set of sufficient conditions for the local weak error to coincide with that of the global weak order. We therefore only provide a brief outline of the proof. By Taylor expanding (at $\deltat=0$) both $({\rm e}^{ n \deltat \Lcg } \varphi ) (x)$  and $\left ( \Pc_{\deltat}^{n}\varphi \right )(x)$ with $\Pc_{\deltat}=\hat{\Pc}^{{\textrm{gle-XYZYX}}}_{\deltat}$, and comparing powers in $\deltat$   we obtain an expansion of the local weak error as
\[
\begin{aligned}
\abs*{ \left ( \Pc_{\deltat} \varphi \right )(x) - ({\rm e}^{\deltat \Lcg } \varphi ) (x) } &= h^{3}\Acal_{3} \varphi (x) +  \deltat^{5}r_{\varphi,\delta,5}(x), 
\end{aligned}
\]
with 
\begin{equation}\label{eq:ac2}
\begin{aligned}
\Acal_{3} &= \frac{1}{12} \Big ( \left [ \Lcz, \left [ \Lcz,\Lcy \right ] \right] + \left [ \Lcy + \Lcz, \left [ \Lcy + \Lcz, \Lcx \right ] \right ]  \\
&  \qquad- \frac{1}{2} \left [ \Lcy, \left [ \Lcy,\Lcz \right] \right]- \frac{1}{2} \left [ \Lcx, \left [ \Lcx,\Lcy+\Lcz \right ] \right ]  \Big ),
\end{aligned}
\end{equation}
where $[A,B] = AB - BA$ denotes the commutator of the two linear operators $A,B$. In other words the convergence order of the local weak error is 2. \cref{as:gle:2} ensures that for $\varphi \in \testfunc$ the remainder term $r_{\varphi,\deltat,5}$ as well as $\Acal_{3} \varphi$ are both contained in $\testfunc$. For sufficiently small stepsize $\deltat>0$ the existence of a suitable Lyapunov functions (see proof of \cref{prop:ergodicity:discrete}) ensures that moments, $\EE \big [\norm{\xnum_{k}}^{2m} \big ]$, of any order $m\in \NN$ are uniformly bounded in the iteration index $k\in \NN$. All together, the conditions of \cite[Theorem 2]{mil1986weak} are met, which implies that the global weak convergence order coincides with the local weak convergence order. 
\end{proof}

\section{Error analysis of ergodic averages}\label{sec:error:analysis}
As in the case of the underdamped Langevin equation and the overdamped Langevin equation, Markov chains of the discretized dynamics
\begin{equation}\label{eq:iterate}
\xnum_{k+1} = \hat{\Phi}^{{\textrm{gle-XYZYX}}}_{\deltat} (\xnum_{k}), ~\xnum_{0} = \x(0), ~k \in \NN,
\end{equation}
can be used as  approximate Markov chain Monte Carlo methods for the computation of expectations with respect to the extended Gibbs-Boltzmann distribution $\muinv$. That is, expectations of observables $\varphi \in L^{2}(\muinv)$, where $L^{2}(\muinv) := \{ \varphi : \xDomain \rightarrow \RR \; \text{measurable} \mid \int \varphi^{2}\dd \muinv\}$, are approximately computed as  trajectory averages of the form $\widehat{\varphi}_{N}:=\frac{1}{N}\sum_{k=1}^{N-1}\varphi(\xnum_{k})$ from a finite trajectory $(\x_{k})_{k=1,\dots,N}$. Such approximate computations are performed under the premise that $(\xnum_{k})_{k\in \NN}$ is ergodic with respect to the  invariant measure $\muinv_{\deltat} \approx \muinv$, so that 
\[
\lim_{N \rightarrow \infty} \widehat{\varphi}_{N} = \int   \varphi(\x)  \muinv_{\deltat}(\dd \x),
\]
for almost all realizations of the Markov chain $(\xnum_{k})_{k \in \NN}$.
In this section we provide theoretical justification for such a computation by showing that the above-mentioned assumptions are indeed satisfied. We first show in  \cref{sec:num:ergodic} that the proposed numerical schemes result in ergodic Markov chains and the validity a central limit theorem for the Monte Carlo error in the following decomposition of the approximation error
\begin{equation}
\widehat{\varphi}_{N}-\EE_{x\sim \muinv}\left [ \varphi(x) \right ]  =  \underbrace{\left (\widehat{\varphi}_{N} - \EE_{x\sim \muinv_{\deltat}}\left [ \varphi(x) \right ]  \right )}_{\text{Monte Carlo error}} + \underbrace{\left (\EE_{x\sim \muinv_{\deltat}}\left [ \varphi(x) \right ] -  \EE_{x\sim \muinv}\left [ \varphi(x) \right ]\right )}_{\text{Systematic bias}}.
\end{equation}
In the next subsection (\cref{sec:num:error}) we provide an analysis of the stepsize dependent systematic bias.

\subsection{Ergodic properties and central limit theorem}\label{sec:num:ergodic}
In addition to showing the existence and uniqueness of the invariant measure $\muinv_{\deltat}$, we show geometric ergodicity of the Markov chain $(\xnum_{k})_{k \in \NN}$. Geometric ergodicity is equivalent to exponential convergence of the corresponding evolution operator  $\hat{\Pc}^{{\textrm{gle-XYZYX}}}_{\deltat}$ in some suitably weighted $L_{\infty}$ space. By \cite{Bhattacharya1982}, the latter property implies the validity of a central limit theorem.
\begin{theorem}\label{prop:ergodicity:discrete}
Let \cref{as:gle} and \cref{as:gle:2} be satisfied and let $\Pc_{h} = \hat{\Pc}^{{\textnormal{gle-XYZYX}}}_{\deltat}$.  Fix $l\in \NN, l >0$ and consider 
\begin{equation}\label{eq:def:lya}
\mathcal{K}_{l}(\q,\p,\s) = \left ( \x^{\trans} \C \x  \right )^{l} + 1, ~ l \in \mathbb{N},
\end{equation}
where $\C \in \RR^{n+m}$ is a suitably chosen symmetric positive definite matrix (see \cref{subsec:lya} for details) Then, there exists $\deltat^{*}>0$, such that for any $\deltat \in (0,\deltat^{*})$
\begin{enumerate}
\item  the Markov chain associated 
with $\Pc_{\deltat}$ has a unique invariant probability measure $\muinv_{\deltat}$, which admits a density with respect to the Lebesgue measure on $\xDomain$ and has finite moments, i.e.,
\begin{equation}
\int_{\xDomain}\Kc_{l}\dd \muinv_{\deltat} < \infty, \quad \text{ for all }  \; \;l \in \NN.
\end{equation}
\item 
there are constants $C_{l} >0$, $r_{l} \in (0,1)$ such that
\begin{equation}
\forall \,\varphi \in L_{\Kc_{l}}^{\infty} \quad \forall \,k\in \NN, \quad  \norm*{ (\Pc_{\deltat}^{k} \varphi) -  \EE_{\muinv_{\deltat}} \varphi }_{L^{\infty}_{\Kc_{l}}} \leq C_{l} r_{l}^{k}\norm{\varphi}_{L^{\infty}_{\Kc_{l}}}.
\end{equation}
 \end{enumerate}
 \end{theorem}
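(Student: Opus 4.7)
The plan is to apply the Harris--Meyn--Tweedie drift-plus-minorization machinery to $\Pc_{\deltat} = \hat{\Pc}^{{\textrm{gle-XYZYX}}}_{\deltat}$: establish a geometric drift condition $\Pc_{\deltat}\Kc_{l} \leq a_{l}\Kc_{l} + b_{l}$ with $a_{l}\in(0,1)$, show that every compact set is small, and conclude $V$-uniformly geometric ergodicity with $V=\Kc_{l}$. Both items of the theorem (existence and uniqueness of $\muinv_{\deltat}$ with finite moments, and the exponential decay in $L^{\infty}_{\Kc_{l}}$) then follow in one stroke, and the central limit theorem alluded to afterward is immediate via \cite{Bhattacharya1982}.

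For the drift, I would track the action of $\Pc_{\deltat}$ on the test function $\varphi_{2}(\x)=\x^{\trans}\C\x$ substep by substep. The A and B updates are deterministic translations, so their effect on $\varphi_{2}$ is an explicit algebraic computation; decomposing $U=U_{1}+U_{2}$ as in \cref{as:gle:it:4}, with $U_{1}$ quadratic and $\nabla U_{2}$ uniformly bounded, the harmonic part is absorbed into a modification of $\C$ while $U_{2}$ contributes only additive $O(\deltat)$ terms. The O step maps $\z\mapsto\F\z+\S\Rand$, and since $-\Gammabf_{\M}$ is stable (\cref{as:gle:it:2}) the matrix $\F=\exp(-\deltat\Gammabf_{\M})$ is a strict contraction in the norm induced by any positive definite $\Dbm$ solving a discrete Lyapunov inequality $\F^{\trans}\Dbm\F-\Dbm\preceq -\deltat\,\I$; such a $\Dbm$ exists by Lyapunov's theorem applied to $\Gammabf_{\M}$. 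Using the commutativity of $\Gammabf_{1,1}$ and $\M$ from \cref{as:gle} to close the $(\p,\s)$-block estimate in closed form, I would pick $\C$ as a small perturbation of $\diag(\Omegabf+c\I_{n},\M^{-1},\Q^{-1})$ augmented by a small off-diagonal coupling between the $\q$ and $\p$ blocks---this coupling is necessary because $\Phi^{\rm A}_{\deltat}$ does not intrinsically dissipate in $\q$, and it is a standard device in quadratic Lyapunov constructions for Langevin-type splittings. For $\deltat\in(0,\deltat^{*})$ this yields $(\Pc_{\deltat}\varphi_{2})(\x)\leq(1-\alpha\deltat)\varphi_{2}(\x)+\beta\deltat$, and raising to the $l$-th power via $(1+y)^{l}\leq 2^{l-1}(1+y^{l})$ gives the required drift for $\Kc_{l}$.

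For the minorization, the O-step covariance $\S\S^{\trans}=\beta^{-1}[\diag(\M,\Q)-\F\diag(\M,\Q)\F^{\trans}]$ is strictly positive definite for every small $\deltat>0$: if it degenerated, the Ornstein--Uhlenbeck block driven by $(-\Gammabf_{\M},\Sigmabf)$ would fail Kalman controllability, contradicting the spanning condition of \cref{as:gle:it:3}. Hence $\Phi^{\rm O}_{\deltat}$ admits a strictly positive Gaussian transition density in $\z=(\p,\s)$ conditional on $\q$, and composing with the deterministic A and B translations---which funnel the randomness of $\p$ into $\q$ through $\q\mapsto \q+(\deltat/2)\M^{-1}\p$---produces a one-step kernel $\Pc_{\deltat}(x,\cdot)$ with a jointly continuous, strictly positive Lebesgue density on $\xDomain$. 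A routine continuity-plus-compactness argument then turns every compact set into a small set. Because $\Kc_{l}\to\infty$ at infinity its sub-level sets are compact and therefore small, so drift plus small set is the standard Harris input, yielding existence and uniqueness of $\muinv_{\deltat}$, the moment bound, and the $V$-geometric convergence of~(2). I expect the main obstacle to be pinning down the common quadratic form $\C$ so that the contraction survives every permutation XYZYX and every parametrization allowed by \cref{as:gle:kernel}: in the degenerate case $\Gammabf_{1,1}=\0$ (\cref{as:gle:kernel:it:2}) there is no direct friction on $\p$, so the cross-coupling $(\Gammabf_{1,2},\Gammabf_{2,1})$ must be used to transport dissipation from the $\s$ block to the $\p$ block, and the symplectic A-step's lack of $\q$-contraction forces the inclusion of the $(\q,\p)$ off-diagonal term in $\C$. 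Once $\C$ is chosen, the remaining estimates reduce to Taylor expansions in $\deltat$.
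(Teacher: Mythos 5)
Your overall scaffolding matches the paper's: a geometric drift in a quadratic Lyapunov norm, plus a small-set/minorization step, fed into a Harris-type theorem (the paper uses \cite{Hairer2011a}, Theorem 1.2). The drift construction is also in the same spirit. The paper's version is somewhat tidier: instead of assembling $\C$ block by block (discrete Lyapunov inequality for the $(\p,\s)$ block, then an ad hoc $(\q,\p)$ cross term), the paper writes the one-step map as $\x_{k+1} = \Ed\x_k + \text{noise} + \deltat R$, identifies $\Ed = \Psi_{\rm B}\Psi_{\rm A}\widetilde{\F}\Psi_{\rm A}\Psi_{\rm B}$ as a symmetric splitting of $\exp(-\deltat\G)$ where $\G$ is the full linearized drift (including the $\Omega$--block coupling $\q$ and $\p$), takes $\C$ to solve the continuous Lyapunov inequality $\G^{\trans}\C + \C\G \succ 0$ (existence from stability of $-\G$, \cite[Lemma 3]{leimkuhler2017ergodic}), and transfers the contraction from $\exp(-\deltat\G)$ to $\Ed$ by a Baker--Campbell--Hausdorff estimate. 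This delivers the $(\q,\p)$ coupling for free and covers all placeholders XYZYX and both parametrizations of \cref{as:gle:kernel} uniformly, which is precisely the ``main obstacle'' you flag at the end.

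The genuine gap is in your minorization step. You claim that one step of the chain already has a ``jointly continuous, strictly positive Lebesgue density on $\xDomain$''. This cannot be true: a single splitting step uses one Gaussian draw $\Rand \in \RR^{n+m}$, so, conditional on $\x_0$, the map $\Rand \mapsto \x_1$ sends $\RR^{n+m}$ into $\xDomain = \RR^{2n+m}$. Since $n+m < 2n+m$, its image is (at most) an $(n+m)$-dimensional manifold and the law of $\x_1$ is singular with respect to Lebesgue measure. In particular, $\q_1 = \q_0 + (\deltat/2)\M^{-1}\p_0 + (\deltat/2)\M^{-1}\p_{\text{post-O}}$ is an affine function of the same noise that produced $\p_1$, so $(\q_1,\p_1,\s_1)$ satisfies a deterministic constraint given $\x_0$. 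Strict positivity of $\S\S^{\trans}$ (which your hypoellipticity argument does correctly establish) makes the $\z$-marginal nondegenerate, but it does not give a joint density over $\xDomain$. The paper handles this by working with the \emph{two-step} map $\Psi_\deltat(\x,\Rand_1,\Rand_2)$, so that the randomness lives in $\RR^{2(n+m)} \supseteq \RR^{2n+m}$, then shows the Jacobian $D_t\Psi_{\deltat,\x}$ has full rank $2n+m$ for all $\x$ (using the uniform Hessian bound from \cref{as:gle:2} to control the A/B-block and the invertibility of $\S$ to control the noise block) and applies \cite[Lemma 6.3]{benaim2015qualitative} plus a compactness/covering argument. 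Your compactness argument at the end is fine, but the object it is applied to must be the two-step kernel; this is exactly why \cref{as:umc} in the paper carries $k=2$.
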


A complete proof of this result can be found in \cref{sec:proofs}. Here, we provide a brief outline:
the proof of the theorem relies on an application of Theorem 1.2 of \cite{Hairer2011a} (see also \cite{Meyn1997,Rey-Bellet2006a} for similar results) and as such includes the standard steps commonly followed for proving geometric ergodicity of a Markov chain. We first show that under the conditions of \cref{prop:ergodicity:discrete} a uniform minorization condition is satisfied. That is,
\begin{assumption}[Minorization condition]\label{as:umc}
Fix any $x_{\rm max} > 0$. There exist $\deltat^{*}>0$ such that for any $\deltat\in (0,\deltat^{*})$, there is $\alpha>0$ so that
\begin{equation}\label{eq:as:umc}
\forall \,\varphi \in \mathcal{C}_{0}(\xDomain,\RR) \qquad \inf_{|\x| \leq x_{\rm max}}\left(\Pc^{k}_{\deltat}\varphi \right ) (\x) \geq \alpha\int_{\xDomain}\varphi(x) \nu(d x),
\end{equation}
where $k=2$, and $\nu$ denotes the Lebesgue measure on $\xDomain$.
\end{assumption}
The validity of this minorization condition ensures that within any compact ball which is centered at the origin, the Markov chain $(\x_{k})_{k\in \NN}$ is mixing within a finite number of steps. As such it already ensures irreducibility of the Markov chain and thus guarantees the uniqueness of the invariant measure $\muinv_{\deltat}$ provided the latter exists. In order to ensure the existence of an invariant measure and exponential converge to that measure, the existence of a suitable Lyapunov function is shown in the second step of the proof. That is,
\begin{assumption}[Uniform Lyapunov condition]\label{as:ulc}
For any $l \in \mathbb{N},\;l>0$, there exists $\deltat^{*}>0$ and $a_{l},b_{l}>0$ such that for any $\deltat,~0 < \deltat \leq \deltat^{*}$, 
\[
\Pc_{\deltat}\mathcal{K}_{l} \leq e^{-a_{l} \deltat}\mathcal{K}_{l} +b_{l} \deltat.
\]
\end{assumption}
Given the validity of both \cref{as:umc} and \cref{as:ulc}, the remaining statements of \cref{prop:ergodicity:discrete} then follow as a consequence of \cite[Theorem 1.2]{Hairer2011a}.

By \cite[Corollary 2.26]{Lelievre2016a} exponential convergence in the sense of \cref{prop:ergodicity:discrete} implies that the operator $({\rm Id} - \Pc_{\deltat})$ when constrained to the subspace 
\[
L_{\Kc_{l},0}^{\infty}=\left \{ \varphi \in L_{\Kc_{l}}^{\infty} ~:  \int \varphi \muinv_{\deltat} = 0 \right \},
\] 
is invertible and the corresponding inverse operator is bounded in terms of the operator norm induced by $\norm{\cdot}_{L_{\Kc_{l}}^{\infty}}$. By the results in \cite{Bhattacharya1982}, this is sufficient for a functional central limit theorem to hold (\ref{eq:clt:2}).

\begin{corollary}[Central Limit theorem]
Let $l\in \NN $ and $\varphi \in  L_{\Kc_{l}}^{\infty} $. For sufficiently small $\deltat>0$, there is finite $\widehat{\sigma}_{\varphi}^{2}>0$ so that 
\begin{equation}\label{eq:clt:2}
\sqrt{N} \left (\widehat{\varphi}_{N}-  \int \varphi \, \dd \muinv_{\deltat} \right ) \xrightarrow[N \to +\infty]{\mathrm{law}}  \mathcal{N}(0,\widehat{\sigma}_{\varphi}^{2}).
\end{equation}
\end{corollary}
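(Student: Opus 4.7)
The plan is to deduce the central limit theorem directly from the geometric ergodicity already established in \cref{prop:ergodicity:discrete}, using the standard Kipnis–Varadhan-type martingale decomposition combined with a Poisson equation. The argument has essentially been sketched in the paragraph preceding the corollary; here I describe how I would flesh it out.

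First, I would fix $l\in\NN$ and $\varphi \in L_{\Kc_{l}}^{\infty}$. Setting $\bar{\varphi} := \varphi - \int \varphi\,\dd\muinv_{\deltat}$, note $\bar{\varphi}\in L_{\Kc_{l},0}^{\infty}$. By \cref{prop:ergodicity:discrete}\,(ii), the evolution operator $\Pc_{\deltat}$ contracts in $\norm{\cdot}_{L^{\infty}_{\Kc_{l}}}$ with rate $r_{l}<1$ on $L_{\Kc_{l},0}^{\infty}$, so the geometric series $\sum_{k=0}^{\infty}\Pc_{\deltat}^{k}\bar{\varphi}$ converges absolutely in this norm; this is precisely the invocation of \cite[Corollary~2.26]{Lelievre2016a} cited in the excerpt, and it defines
\[
\psi := ({\rm Id}-\Pc_{\deltat})^{-1}\bar{\varphi} \;\in\; L_{\Kc_{l},0}^{\infty},
\qquad \norm{\psi}_{L^{\infty}_{\Kc_{l}}} \leq \tfrac{C_{l}}{1-r_{l}}\,\norm{\bar{\varphi}}_{L^{\infty}_{\Kc_{l}}}.
\]
The function $\psi$ solves the discrete Poisson equation $({\rm Id}-\Pc_{\deltat})\psi = \bar{\varphi}$.

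Second, I would write the standard martingale decomposition. Set $M_{k+1} := \psi(\xnum_{k+1}) - (\Pc_{\deltat}\psi)(\xnum_{k})$ for $k\geq 0$; then $(M_{k})_{k\geq 1}$ are martingale increments with respect to the filtration generated by $(\xnum_{k})_{k\geq 0}$, and
\[
\bar{\varphi}(\xnum_{k}) = \psi(\xnum_{k}) - (\Pc_{\deltat}\psi)(\xnum_{k})
= \bigl[\psi(\xnum_{k}) - \psi(\xnum_{k+1})\bigr] + M_{k+1}.
\]
Summing from $k=0$ to $N-1$ yields
\[
N(\widehat{\varphi}_{N}-\EE_{\muinv_{\deltat}}\varphi)
= \psi(\xnum_{0}) - \psi(\xnum_{N}) + \sum_{k=1}^{N} M_{k}.
\]
The boundary term $\psi(\xnum_{0})-\psi(\xnum_{N})$ is $O_{\PP}(1)$ after division by $\sqrt{N}$ because $\psi\in L^{\infty}_{\Kc_{l}}$ and the Lyapunov bound of \cref{as:ulc} ensures uniform-in-$k$ bounds $\EE[\Kc_{l}(\xnum_{k})]<\infty$, so it vanishes in probability under $N^{-1/2}$ scaling. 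It therefore suffices to apply a martingale central limit theorem to $N^{-1/2}\sum_{k=1}^{N} M_{k}$.

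Third, I would invoke the martingale CLT for stationary, ergodic Markov chains as formulated in \cite{Bhattacharya1982}. The required hypotheses are: (a) existence and uniqueness of the invariant measure $\muinv_{\deltat}$ with polynomial moments, which is \cref{prop:ergodicity:discrete}\,(i); (b) ergodicity of the chain under $\muinv_{\deltat}$, which follows from the geometric ergodicity in \cref{prop:ergodicity:discrete}\,(ii); and (c) $L^{2}(\muinv_{\deltat})$-integrability of the martingale increments, which is immediate since $\psi\in L^{\infty}_{\Kc_{l}}$ and $\Kc_{l}^{2}=\Kc_{2l}$ has finite $\muinv_{\deltat}$-moment by \cref{prop:ergodicity:discrete}\,(i). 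Under these hypotheses, $N^{-1/2}\sum_{k=1}^{N}M_{k}$ converges in law to $\mathcal{N}(0,\widehat{\sigma}_{\varphi}^{2})$ with the asymptotic variance identified by
\[
\widehat{\sigma}_{\varphi}^{2} = \EE_{\muinv_{\deltat}}\!\bigl[(\Pc_{\deltat}\psi^{2} - (\Pc_{\deltat}\psi)^{2})(\xnum_{0})\bigr]
=\int \psi^{2}\,\dd\muinv_{\deltat} - \int (\Pc_{\deltat}\psi)^{2}\,\dd\muinv_{\deltat},
\]
which is finite and nonnegative; positivity is the generic case (vanishing occurs only for $\varphi$ that is $\muinv_{\deltat}$-a.s. constant). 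Combining with the vanishing of the boundary term yields the claimed convergence. The only real obstacle is technical bookkeeping to ensure the Markov chain started from an arbitrary deterministic $\xnum_{0}$ (not from $\muinv_{\deltat}$) still satisfies the CLT; this follows from a standard coupling argument using the minorization in \cref{as:umc} together with the geometric contraction, which allows one to replace the initial distribution by $\muinv_{\deltat}$ up to an error that is negligible on the $\sqrt{N}$ scale.
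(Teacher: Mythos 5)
Your proposal is correct and follows essentially the same route the paper indicates: geometric ergodicity yields the bounded inverse $({\rm Id}-\Pc_{\deltat})^{-1}$ on $L_{\Kc_{l},0}^{\infty}$ (which is the content of \cite[Corollary~2.26]{Lelievre2016a}), the resulting Poisson solution gives the martingale decomposition, and the martingale CLT for ergodic Markov chains (as in \cite{Bhattacharya1982}) closes the argument. You have simply unpacked the steps that the paper compresses into its two citations, including the standard handling of the boundary term and the arbitrary initial condition.
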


\subsection{Analysis of the systematic bias}\label{sec:num:error}
In this section we provide results regarding the convergence order in $\deltat$ of the discretization bias in ergodic averages of  symmetric splitting schemes. As discussed in \cref{sec:weak:error}, the weak convergence order of these schemes is two, which together with the above shown ergodicity result implies that also the convergence order of the systematic discretization bias in ergodic averages is at least two, i.e.,
\[
\lim_{N \rightarrow \infty} \widehat{\varphi}_{N} = \int   \varphi(\x)  \muinv_{\deltat}(\dd \x) = \int \varphi(\x) \muinv(\dd \x) + O(\deltat^{2}),
\]
as $\deltat \rightarrow 0$. In what follows we discuss two special cases where the second order convergence can be improved upon. We first derive the explicit form of the measure $\muinv_{\deltat}$ of the gle-BAOAB scheme in the situation where the target measure $\muinv$ is Gaussian. Secondly, we analyze the behavior of the discretization error of the gle-BAOAB scheme in the overdamped limit.

\subsubsection{Systematic bias for quadratic potentials}
An important property of the gle-BAOAB scheme is that its invariant measure $\muinv_{\deltat}$, when applied to a system with quadratic potential, is such that the marginal in $\q$, $\muinv_{\deltat}(\dd \q)$, coincides with the marginal in $\q$ of the exact invariant measure $\muinv(\dd \q) \propto e^{-\beta\q \Omegabf^{-1}\q}\,\dd\q $, so that
\[
\lim_{N\rightarrow \infty}\widehat{\varphi}_{N} =  \int   \varphi(\q)  \muinv_{\deltat}(\dd \q),
\]
for $\varphi \in L^{2} \left (\muinv(\dd \q) \right)$. More specifically, we have the following theorem. \begin{theorem}\label{prop:gaussian:error}
Let $U(\q) = \frac{1}{2}\q^{\trans}\Omegabf\;\q$ with $\Omegabf \in \RR^{n \times n}$ symmetric  positive definite. The Gaussian measure $\muinv_{\deltat}(\dd \x) 
\propto \exp\left({-(\x-\mubf_{\deltat})^{\trans}{\bm V}^{-1}_{\deltat}(\x-\mubf_{\deltat})}\right) \dd \x$, with
\begin{equation}\label{inv:var:BAOAB}
\mubf_{h} = {\bm 0},~~{\bm V}_{\deltat} = \beta^{-1} \diag \left (\Omegabf^{-1},\;  \(1-\deltat^{2}/4\)\M, \; \Q \right ),
\end{equation}
is invariant under $\textnormal{gle-BAOAB}$ with $\Q$ as defined in \cref{prop:GLE:cons}.
\end{theorem}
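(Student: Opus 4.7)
The plan is to reduce the claim to a covariance fixed-point calculation. Because $U(\q) = \tfrac{1}{2}\q^{\trans}\Omegabf\q$ is quadratic, each substep of \textnormal{gle-BAOAB} is either linear in $\x = (\q, \p, \s)$ (the B and A substeps) or affine with additive Gaussian noise independent of the state (the O substep). One step of the integrator therefore takes the form $\x_{k+1} = T\x_{k} + \xi_{k}$ for a fixed matrix $T$ and a centered Gaussian $\xi_{k}$ independent of $\x_{k}$. The chain preserves the class of centered Gaussian measures, so by Theorem~3.6 the unique invariant measure is itself a centered Gaussian, characterized solely by its covariance. The theorem thus reduces to verifying that the specific covariance ${\bm V}_\deltat$ satisfies the fixed-point equation ${\bm V}_\deltat = T{\bm V}_\deltat T^{\trans} + \Cov(\xi)$.

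The verification proceeds by direct calculation, tracking the block decomposition of the covariance through the five substeps. Denote by $T_B, T_A, T_O$ the linear operators on $\x$ induced by the respective substeps: $T_B$ encodes $\p \mapsto \p - (\deltat/2)\Omegabf\q$, $T_A$ encodes $\q \mapsto \q + (\deltat/2)\M^{-1}\p$, and $T_O = \mathrm{blockdiag}(\I_n, \F)$ acts on $(\p, \s)$ by $\F = \exp(-\deltat\Gammabf_{\M})$. Starting from ${\bm V}_0 = {\bm V}_\deltat$, one computes the intermediate covariances ${\bm V}_1 = T_B {\bm V}_0 T_B^{\trans}$, ${\bm V}_2 = T_A {\bm V}_1 T_A^{\trans}$, ${\bm V}_3 = T_O {\bm V}_2 T_O^{\trans} + \Sigma_O$, ${\bm V}_4 = T_A {\bm V}_3 T_A^{\trans}$, ${\bm V}_5 = T_B {\bm V}_4 T_B^{\trans}$, where $\Sigma_O$ has zero $\q$-block and $(\p, \s)$-block equal to $\beta^{-1}[\diag(\M, \Q) - \F\diag(\M, \Q)\F^{\trans}]$ by Proposition~1.2. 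The goal is to verify ${\bm V}_5 = {\bm V}_\deltat$. The essential algebraic inputs are the fluctuation-dissipation identity $\F\diag(\M, \Q)\F^{\trans} + \beta(\Sigma_O)_{(\p,\s)} = \diag(\M, \Q)$, the symmetry of $\Omegabf$, and Assumption~1.1(iii) on the commutativity of $\Gammabf_{1,1}$ with $\M$.

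The main obstacle lies in the bookkeeping of cross-block correlations: the first BA pair creates $\q$-$\p$ correlations of order $\deltat$, which mix into the auxiliary block $\s$ via $\F$ during the O substep, and the terminal AB pair must absorb all such contributions exactly so as to restore the block-diagonal form of ${\bm V}_\deltat$. The specific factor $(1 - \deltat^{2}/4)$ in the $\p$-block of ${\bm V}_\deltat$ is precisely the correction compatible with the Verlet-like outer sandwich $T_B T_A T_A T_B$ that encloses the central O substep, and is the quasi-Markovian analogue of the exact-sampling property of pure BAOAB for the position marginal of underdamped Langevin dynamics established in~\cite{LeMaSt2015}. Organizing the computation by exploiting the palindromic structure of \textnormal{gle-BAOAB} together with the fluctuation-dissipation identity reduces the verification to a handful of matrix identities in $\Omegabf$, $\M$, and $\Gammabf$ that close by direct expansion.
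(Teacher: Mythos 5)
Your proposal is correct and follows essentially the same route as the paper's proof: both reduce the claim to a covariance fixed-point equation under the affine-Gaussian substep maps $T_B, T_A, T_O$ (the paper's $\Psibf_B, \Psibf_A, \widetilde{\F}$), invoke the fluctuation-dissipation identity in the O-step, and appeal to ergodicity for uniqueness. The only difference is cosmetic framing — the paper phrases the fixed-point condition as stationarity under the pushforward operator $\big(\hat{\Pc}^{\textnormal{gle-BAOAB}}_\deltat\big)^\dagger$ acting on Gaussian densities, while you phrase it via the pathwise recursion $\x_{k+1} = T\x_k + \xi_k$; these are the same calculation.
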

\begin{proof}
We take a dual perspective and show that the above specified Gaussian measure $\muinv_{\deltat}$ is the unique stationary solution of the corresponding forward equation, i.e.,
\begin{equation}\label{eq:forward}
 \left (  \hat{\Pc}^{{\textnormal{gle-BAOAB}}}_{\deltat} \right)^{\dagger}\muinv_{\deltat} = \muinv_{\deltat},
\end{equation}
where $  \big(  \hat{\Pc}^{{\textnormal{gle-BAOAB}}}_{\deltat} \big)^{\dagger} = \exp( \frac{\deltat}{2} \mathcal{L}_{B}^{\dagger})\exp( \frac{\deltat}{2} \mathcal{L}_{A}^{\dagger})\exp( \deltat \mathcal{L}_{O}^{\dagger})\exp( \frac{\deltat}{2}  \mathcal{L}_{A}^{\dagger})\exp( \frac{\deltat}{2}  \mathcal{L}_{B}^{\dagger})$, is the forward operator of the gle-BAOAB scheme, which by construction is simply the concatenation of the forward operators corresponding to the respective B/A/O- steps in the order given by the splitting scheme. The action of these forward operators when applied to a multivariate Gaussian measure \[\mathcal{N}(\x | \mubf, {\bm V}) \dd \x \propto \exp\left({-(\x-\mubf)^{\trans}{\bm V}^{-1}(\x-\mubf)}\right) \dd \x,\] is found to be
 \begin{align*}
\exp \left ( \frac{\deltat}{2}\mathcal{L}_{A}^{\dagger} \right)\mathcal{N}(\;\cdot\; \;| \mubf,{\bm V}) &= \mathcal{N}(\;\cdot\; \;| ~\Psibf_{A} \mubf,\Psibf_{A} {\bm V} \Psibf_{A}^{\trans} ),\\
\exp\left ( \frac{\deltat}{2} \mathcal{L}_{B}^{\dagger}\right)\mathcal{N}(\;\cdot\; \;| \mubf,{\bm V}) &= \mathcal{N}(\;\cdot\; \;| ~\Psibf_{B} \mubf,\Psibf_{B} {\bm V} \Psibf_{B}^{\trans} ),
\end{align*}
where 
\[
\Psibf_{A}=
 \begin{bmatrix}
\I_{n} & \frac{\deltat}{2} \M^{-1} & \0 \\
\0 &  \I_{n}   & \0 \\
 \0 &  \0  & \I_{m} 
\end{bmatrix} ,
\quad
\Psibf_{B}=
 \begin{bmatrix}
\I_{n}   & \0 &{\bm 0} \\
-\frac{\deltat}{2}\Omegabf &  \I_{n}  & \0 \\
 \0  &  \0 & \I_{m} 
\end{bmatrix},
\]
and 
\begin{align*}
\exp( \deltat \mathcal{L}_{O}^{\dagger})\mathcal{N}(\;\cdot\; \;| \mubf,{\bm V}) &=  \mathcal{N}(\;\cdot\; \; | \widetilde{\F}\mubf, \widetilde{\F}{\bm V} \widetilde{\F}^{\trans} + \widetilde{\S}\widetilde{\S}^{\trans}),
\end{align*}
where
\begin{equation}\label{eq:F:S}
\widetilde{\F} = 
\begin{pmatrix}
\I_{n} & \0\\
\0 & \F
\end{pmatrix},
\quad
\widetilde{\S} = 
\begin{pmatrix}
\0 & \0\\
\0 & \S
\end{pmatrix}.
\end{equation}
Thus, the Gaussian measure with density $\mathcal{N}(\;\cdot\; \;| \mubf,{\bm V})$ is invariant under the action of $\big(  \hat{\Pc}^{{\textnormal{gle-BAOAB}}}_{\deltat} \big)^{\dagger} $ exactly if 
\begin{equation*}
\begin{aligned}
\mubf &= \Psibf_{B}\Psibf_{A}  \widetilde{\F} \Psibf_{A}\Psibf_{B} \mubf,\\
{\bm V} &= \Psibf_{B}\Psibf_{A}\widetilde{\F}\Psibf_{A}\Psibf_{B}{\bm V} \Psibf^{\trans}_{B}\Psibf^{\trans}_{A}\widetilde{\F}^{\trans}\Psibf^{\trans}_{A}\Psibf^{\trans}_{B}
+ \Psibf_{B}\Psibf_{A}\widetilde{\S}\widetilde{\S}^{\trans} \Psibf^{\trans}_{A}\Psibf^{\trans}_{B}.
\end{aligned}
\end{equation*}
which is satisfied if $\mubf = \mubf_{\deltat}$ and ${\bm V} ={\bm V}_{\deltat}$ with $\mubf_{\deltat}=0$ and ${\bm V}_{\deltat}$ as specified in the proposition. Since for sufficiently small $\deltat>0$ the discretized dynamics are ergodic (see \cref{sec:num:ergodic}), this solution is also the unique solution of equation \eqref{eq:forward}. 
\end{proof}
\begin{remark}\label{rem:error:other}
With the same techniques as in the proof of \cref{prop:gaussian:error}, one may show that for the same quadratic potential function, the invariant measure of the Markov chain generated by \textrm{gle-ABOBA} is identical to the invariant measure of \textrm{gle-BAOAB}, and that the unique invariant measure of the Markov chains generated by \textrm{gle-OBABO} and \textrm{gle-OABAO} is the Gaussian measure $\mathcal{N}(\x;\mubf_{\deltat}, {\bm V}_{\deltat}) \dd \x$ with $\mubf_{\deltat} = {\bm 0}$, and ${\bm V}_{\deltat} = \beta^{-1} \diag \left ( \(1-\deltat^{2}/4\)\Omegabf^{-1},\; \M,
\; \Q \right)$.

\end{remark}
\subsubsection{Superconvergence of gle-BAOAB in the overdamped limit}\label{sec:superconv}
The gle-BAOAB scheme possesses a superconvergence property in the discrete time version of the overdamped limit (see \cref{sec:disc:overdamped}). That is, for observables $\varphi \in L^{2}(\muinv)$ which are purely functions of the position variable $\q$, the incurred discretization bias of the corresponding ergodic average when  computed using the gle-BAOAB scheme applied to rescaled process \eqref{eq:GLE:mark:rescaled} with $\lambda=1$ and $\mu_{1}=\mu_{2}=\varepsilon^{-1}$ behaves as 
\begin{equation}\label{eq:superconv:close}
\abs*{\int_{\xDomain} \varphi(\q) \muinv_{\deltat}(\dd \x)- \EE_{\muinv}\varphi } = \bigO(\epsilon \deltat^{2}) + \bigO(\deltat^{4}),
\end{equation}
as $\varepsilon\rightarrow 0$ and $\deltat \rightarrow 0$. For sufficiently small values of $\varepsilon$, the magnitude of the leading order term of the discretization bias decreases linearly in $\varepsilon$. In particular, in the limit $\varepsilon = 0$, the leading error term $\bigO(\epsilon \deltat^{2})$ in \eqref{eq:superconv:close} vanishes. This results in the discretization bias to decrease at fourth order in $h$ (instead of second order as one would expect by construction) -- a property which we refer to as ``superconvergence''.

We formally show this result for a particle of unit mass in a one dimensional positional domain and memory kernel corresponding to a matrix of the $\Gammabf$ of the generic form
\[
\Gammabf =
\begin{pmatrix}
\Gamma_{1,1} & \Gamma_{1,2}\\
\Gamma_{2,1} & \Gamma_{2,2}
\end{pmatrix} \in \RR^{2}.
\]
which is assumed to satisfy \cref{as:gle}. Our derivation can be extended to more general forms of \eqref{eq:qgle}, but we refrain from doing so in order to keep notation simple.  As a starting point of the derivation we consider again a Taylor expansion of the evolution operator 
\begin{equation}
\hat{\Pc}^{{\textrm{gle-BAOAB}}}_{\deltat} = {\rm Id} + \deltat\Acal_{1}  +  h^{3}\Acal_{3} \varphi (x) + O(h^{5}) ,
\end{equation}
where $\Acal_{1} = \Lcg$ and $\Acal_{3}$ as defined in \eqref{eq:ac2} with $X={\rm B},Y={\rm A},Z={\rm O}$. By \cite[Theorem 3.3]{Lelievre2016a} and under suitable regularity conditions on the generator $\Lcg$ and on the operators $\Acal_{k}, k \geq 1$ (see \cref{rem:expansion:reg}), there exists $\deltat^{*}>0$ so that the expectation of test functions $\varphi\in L^{2}(\muinv)$ with respect to the perturbed invariant measure $\muinv_{\deltat}$ can be expanded as
\begin{equation}\label{eq:error:expantion:general}
\int_{\xDomain} \varphi(\x) \muinv_{\deltat}(\dd \x) =  \EE_{\muinv}\varphi +  \deltat^{2}\int_{\xDomain} \varphi(\x) f_{3}(\x) \muinv (\dd \x) + \deltat^{4} R_{\varphi,\deltat }
\end{equation}
with $\abs{R_{\varphi,\deltat}}$ uniformly bounded for $\deltat\in(0,\deltat^{*}]$. The correction term $f_{3}$ is obtained as the solution of 
\begin{equation}\label{eq:pde:f3}
\Lcg^{*} f_{3}  = \Acal_{3}^{*}{\bf 1},
\end{equation}
  where the explicit form of the right hand side can be computed as
\[
\begin{aligned}
\Acal_{3}^{*}{\bf 1} &= -\frac{1}{4}\beta\left( \p^2 \Gamma_{1,1}+   \p\; \s \Gamma_{2,1}  - \beta ^{-1}\Gamma_{1,1} \right)  U''(\q)  \\
& ~~-\frac{1}{12} \beta  \p^3 U^{(3)}(\q)+\frac{1}{4} \beta  \p U'(\q) U''(\q).
\end{aligned}
\]
Here, and below, we denote $L^{2}(\muinv)$-adjoint of an operator $\Acal$ by $\Acal^{*}$ so that $\innerLmu{\Acal g,f}= \innerLmu{g,\Acal^{*}f}$ for all $g,f \in L^{2}(\muinv)$, where $\innerLmu{f,g} := \int f g \; \dd \muinv$. By virtue of the Fredholm alternative equation \eqref{eq:pde:f3} possesses a solution iff  $ \innerLmu{g,\Acal_{3}^{*}{\bf 1}} = 0$ for all functions $g$ contained in the null space of $\Lcg$. Since the SDE associated with the generator $\Lcg$ is by assumption ergodic,
the null space of $\Lcg$ only contains constant functions for which $\innerLmu{g,\Acal_{3}^{*}{\bf 1}} \propto \innerLmu{{\bf 1},\Acal_{3}^{*}{\bf 1}}  = 0$ is  indeed true. 

Finding a closed form solution of the PDE \eqref{eq:pde:f3} is still intractable for general potentials. Instead, we employ a singular perturbation approach. Under the scaling $\lambda=1, \mu_{1}=\mu_{2}=\varepsilon^{-1}$ the generator decomposes as $\Lcg = \varepsilon^{-1}\Lco + \Lch$ and we can expand the solution $f_{3}$ in $\varepsilon$ as $f_{3}= f_{3,0}+ \varepsilon f_{3,1} + \varepsilon^{2} f_{3,2} +O(\varepsilon^{3})$. By plugging this into \eqref{eq:pde:f3} we get
\begin{equation}\label{eq:exp:epsilon}
\left ( \frac{1}{\varepsilon} \Lco^{*} + \Lch^{*} \right ) \left (f_{3,0}+ \varepsilon f_{3,1} + \varepsilon^{2} f_{3,2} +O(\varepsilon^{3}) \right )  = \Acal_{3}^{*}{\bf 1}
\end{equation}
from which we obtain the following collection of PDEs by  equating powers of $\varepsilon$
\begin{align}
\Lco^{*}f_{3,0} & = \frac{1}{4} \beta  \p^2 \Gamma_{1,1} U''(\q)-\frac{1}{4} \Gamma_{1,1} U''(\q) + \frac{1}{4}\beta s^{T}  \Gamma_{2,1}  \p  U''(\q) , \label{pde1b}\\
\Lch^{*} f_{3,0} + \Lco^{*} f_{3,1}&=\frac{1}{12} \beta  \p^3 U^{(3)}(\q)-\frac{1}{4} \beta  \p U'(\q) U''(\q) , \label{pde2b}\\
\Lch^{*} f_{3,1} + \Lco^{*} f_{3,2}&= 0,\label{pde3b}\\
\Lch^{*} f_{3,i} + \Lco^{*} f_{3,i+1}&= 0, ~ i \geq 2.\label{pde4b}
\end{align}
Solving this system iteratively, we find (see \cref{supp:sec:pde:solution} for details):
\[
 f_{3,0}(\q,\p,\s) = -\frac{1}{8 }\beta  \p^2 U''(\q) +  \frac{1}{8}U''(\q),
\]
which can be verified to satisfy $\int_{\xDomain } \varphi(\q) f_{3,0}(\x) \muinv( \dd \x) = 0$, for any observable $\varphi\in \testfunc$ which is purely a function of $\q$. Thus, for such $\varphi$, \eqref{eq:error:expantion:general} can be written as 
\[
\begin{aligned}
\int_{\xDomain} \varphi(\q) \muinv_{\deltat}(\x) \dd \x 
=&\EE_{\muinv}\varphi + \epsilon \deltat^{2}\int_{\xDomain} \varphi(\q)  f_{3,1}(\q)\muinv(\dd \x) +\bigO(\epsilon^{2}\deltat^{2})   + \bigO(\deltat^{4}),
\end{aligned}
\] 
as $\varepsilon\rightarrow 0$ and $\deltat \rightarrow 0$, which is the desired statement.
\begin{remark}\label{rem:expansion:reg}
The formal error analysis can be made rigorous by showing that the remainder terms in expansions \eqref{eq:error:expantion:general} and \eqref{eq:exp:epsilon} are uniformly bounded for sufficient small $\deltat$, and $\varepsilon$, respectively. For the expansion \eqref{eq:error:expantion:general} it would be sufficient to show that the conditions of  \cite[Theorem 3.3]{Lelievre2016a} are indeed satisfied. In particular, this would entail showing that the function set $\mathcal{C}_{P,0}^{\infty}(\xDomain,\RR):= \left \{ \varphi \in \testfunc : \int \varphi\, \dd \muinv = 0 \right \}$,
is invariant under application of the operators $\Lcg^{-1}$ and $\left( \Lcg^{*} \right )^{-1}$, as well as that $\testfunc$ is invariant under application of the operators $\Acal_{k}, k\in \NN$. Analogous estimates have been shown in \cite{Kopec,Kopeca,redon2016error} for the generators of the overdamped Langevin equation, the underdamped Langevin equation, and Langevin equations with generalized kinetic energies, respectively. Moreover, in order to make the expansion of \eqref{eq:exp:epsilon} rigorous one would need to show --as in \cite{LeMaSt2015}-- a uniform Hypocoercivity property of the form: there is a $K>0$ such that $\norm{(\varepsilon^{-1}\Lco + \Lch)^{-1}\varphi }_{H^{1}(\muinv)} \leq K \norm{\varphi}_{H^{1}(\muinv)}$
for any $\varepsilon>0$ and all test functions $\varphi$ contained in the weighted Sobolev space $H^{1}(\muinv)$ and which are such that for almost all $\q$ the mean with respect to the marginal measure $\muinv(\dd \p \, \dd \s)$ vanishes.
\end{remark}

\section{White noise and overdamped limit of the gle-BAOAB method}\label{sec:limit:methods}
In this section we analyze the behavior of the gle-BAOAB splitting method in the overdamped and white noise limit discussed in \cref{sec:limit:dyn}. 
For this purpose consider the stochastic flow-map of the gle-BAOAB method when applied to the rescaled process \eqref{eq:GLE:mark:rescaled},
\begin{equation}
\hat{\Phi}^{{\textrm{gle-BAOAB}}}_{\deltat,\mu} = \Phi^{\rm B}_{\deltat/2} \circ \Phi^{\rm A}_{\deltat/2} \circ \Phi^{\rm O}_{\deltat,\mu} \circ \Phi^{\rm A}_{\deltat/2} \circ \Phi^{\rm B}_{\deltat/2},
\end{equation}
where 
\begin{equation}
\Phi^{\rm O}_{\deltat,\mu}: (\q,\p,\s) \mapsto  (\q, {\bm F}^{\mu}_{\deltat}\; (\p, \s)^{\trans} + {\bm S}^{\mu}_{\deltat} \Rand), \; \Rand \sim \mathcal{N}( {\bm 0}, \I_{n+m}),
\end{equation}
with 
\[
{\bm F}^{\mu}_{\deltat} := \exp (-\deltat \Gammabf^{\mu}), \quad 
\left ( {\bm S}^{\mu}_{\deltat}\right ) ^{\trans}{\bm S}^{\mu}_{\deltat} = 
\begin{pmatrix}
\I_{n} & \0 \\
\0 & \Q
\end{pmatrix} - {\bm F}^{\mu}_{\deltat}\begin{pmatrix}
\I_{n} & \0 \\
\0 & \Q
\end{pmatrix} { {\bm F}^{\mu\trans}_{\deltat}},
\] 
and $\Gammabf^{\mu}$ as defined in \eqref{eq:Gamma:rescale}. In both limits \textrm{gle-BAOAB} converge to state-of-the-art numerical integration schemes for the corresponding limiting dynamics which have been shown exhibit particularly low discretization bias (see \cite{Leimkuhler2013a,LeMaSt2015}). 
\subsection{White noise limit}
The \textup{gle-BAOAB} integration scheme, when applied to the rescaled process \eqref{eq:GLE:mark:rescaled} with  $\lambda = 1, \mu_{1} =\epsilon^{-1},\; \mu_{2}=\epsilon^{-2}$, reduces to the \textup{ld-BAOAB} discretization of an underdamped Langevin equation in the white noise limit $\epsilon \rightarrow 0$.
More precisely,  we have the following result:
\begin{theorem}[White noise limit of gle-BAOAB]\label{col:discrete:WN}
Let $(\q_{k},\p_{k},\s_{k})_{k\in \NN}$ be the Markov chain obtained by with the \textup{gle-BAOAB} method $\hat{\Phi}^{\textup{gle-BOAOB}}_{\deltat,(\epsilon^{-1},\epsilon^{-2})}$. Let $(\hat{\q}_{k},\hat{\p}_{k})_{k\in \NN}$ denote the Markov chain generated by the \textup{ld-BAOAB} method of \cite{Leimkuhler2013a} (see also {\em Algorithm} \ref{alg:ld:baoab})
when applied to \eqref{eq:LD} with friction tensor $\widehat{\Gammabf} = {\bm D}_{a}^{2}{\bm D}_{b}^{-1}$, diffusion tensor $\widehat{\Sigmabf}= \sqrt{2}{\bm D}_{a}{\bm D}_{b}^{-1/2}$ and stepsize $\deltat$. Then, for all $N\in \NN$, we have
\[
(\q_{k},\p_{k})_{0\leq k \leq N}  \xrightarrow[\varepsilon \to 0]{\mathrm{law}} (\hat{\q}_{k},\hat{\p}_{k})_{0\leq k\leq N}.
\]
\end{theorem}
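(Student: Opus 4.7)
The gle-BAOAB and ld-BAOAB integrators share their deterministic \textrm{B} and \textrm{A} substeps: both act as the standard half-step force kick and half-step drift on $(\q,\p)$, and gle-BAOAB leaves $\s$ untouched during these substeps. All $\epsilon$-dependence is therefore concentrated in the middle \textrm{O} substep, and it suffices to show that the conditional law of the momentum update under $\Phi^{\textrm{O}}_{\deltat,(\epsilon^{-1},\epsilon^{-2})}$ converges, as $\epsilon\to 0$, to the Ornstein--Uhlenbeck update of ld-BAOAB with friction $\widehat{\Gammabf}=\bm{D}_a^{2}\bm{D}_b^{-1}$ and diffusion $\widehat{\Sigmabf}=\sqrt{2}\bm{D}_a\bm{D}_b^{-1/2}$. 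Starting from the common initial data with $\s_0\sim\mathcal{N}(\bm{0},\I_n)$, an induction on the step index $k$ --- using the continuous mapping theorem for the identical B and A updates and conditioning on $(\p',\s')$ before the O update --- then propagates joint convergence in law for $(\q_k,\p_k)_{0\le k\le N}$.

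\textbf{Analysis of the O-step.} Since $\bm{D}_a$ and $\bm{D}_b$ are diagonal, the matrix $\Gammabf^{\mu}$ decouples the momentum--auxiliary coordinates into $n$ independent $2\times 2$ blocks
\[
\gamma^{\mu}_i=\begin{pmatrix}0 & -a_i/\epsilon\\ a_i/\epsilon & b_i/\epsilon^{2}\end{pmatrix},
\]
reducing the whole analysis to an asymptotic expansion of $\exp(-\deltat\,\gamma^{\mu}_i)$. The eigenvalues satisfy $\lambda_{+}\lambda_{-}=a_i^{2}/\epsilon^{2}$ and $\lambda_{+}+\lambda_{-}=b_i/\epsilon^{2}$, giving a fast mode $\lambda_{+}=b_i/\epsilon^{2}+O(1)$ aligned with the $\s_i$-direction and a slow mode $\lambda_{-}=a_i^{2}/b_i+O(\epsilon^{2})$ that is, up to $O(\epsilon)$, aligned with the $\p_i$-direction. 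Propagating these asymptotics through the spectral representation of $\bm{F}^{\mu}_{\deltat}$ and the Lyapunov identity that defines $\bm{S}^{\mu}_{\deltat}$, one obtains
\[
\bm{F}^{\mu}_{\deltat}\longrightarrow \begin{pmatrix}e^{-\deltat\widehat{\Gammabf}} & \bm{0}\\ \bm{0} & \bm{0}\end{pmatrix},\qquad (\bm{S}^{\mu}_{\deltat})^{\trans}\bm{S}^{\mu}_{\deltat}\longrightarrow\begin{pmatrix}\I_n-e^{-2\deltat\widehat{\Gammabf}} & \bm{0}\\ \bm{0} & \I_n\end{pmatrix},
\]
as $\epsilon\to 0$. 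The upper-left entries are precisely the multiplicative factor and noise covariance of the ld-BAOAB O-step, so the conditional Gaussian law of $\p_{+}$ given $(\p,\s)$ converges to $\mathcal{N}\bigl(e^{-\deltat\widehat{\Gammabf}}\p,\,\I_n-e^{-2\deltat\widehat{\Gammabf}}\bigr)$. Tightness of $\s_k$ in the induction is automatic, because the fast eigenvalue drives the $\s$ component to its stationary $\mathcal{N}(\bm{0},\I_n)$ law after a single step.

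\textbf{Main obstacle and alternative route.} The principal technical point is to carry the $\epsilon\to 0$ matrix-exponential asymptotics with enough uniformity in $(\p,\s)$ that the conditional Gaussian laws actually converge, and in particular to verify that the off-diagonal block $F^\mu_{12}$ --- the $\s\to\p$ coupling --- really vanishes on the fixed time scale $\deltat$ and not merely after a further rescaling. A cleaner alternative that bypasses the explicit $2\times 2$ computation is to observe that the O-substep is the exact flow, over the window $[0,\deltat]$, of the sub-SDE obtained from \eqref{eq:GLE:mark:rescaled} by setting $U\equiv 0$ and freezing $\q$; then \cref{prop:wn:limit}, applied to this linear sub-SDE with vanishing potential, yields convergence in law of the momentum marginal to the corresponding Ornstein--Uhlenbeck flow of \eqref{eq:LD} at time $\deltat$, which is exactly the ld-BAOAB O-update. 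Either route then combines with the inductive propagation through the identical B and A steps to yield the claim.
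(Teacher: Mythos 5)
Your proposal is correct and follows essentially the same route as the paper: reduce the comparison to the O-substep, use that $\bm D_a,\bm D_b$ are diagonal so $\Gammabf^{\mu}$ block-decouples into $n$ independent $2\times2$ problems, and extract the $\epsilon\to0$ limit of $\exp(-\deltat\Gammabf^{\mu})$ (the paper does this by an explicit $2\times2$ diagonalization in \cref{lem:exp:limit}, which in particular shows that the $\s\to\p$ off-diagonal entry $(e^{-\lambda_2^\epsilon}-e^{-\lambda_1^\epsilon})a\epsilon/c_\epsilon\to0$, settling the point you flag as the "principal technical obstacle"). Your discussion of the induction over $k$ and the fact that $\s_k$ is driven to its stationary $\mathcal{N}(\bm 0,\I_n)$ law after one O-step makes explicit the step the paper's proof compresses into "it is sufficient to show that the transition probabilities converge"; your "alternative route" via \cref{prop:wn:limit} applied to the frozen-$\q$, $U\equiv0$ sub-SDE is conceptually tidy but, since that sub-SDE is exactly the linear OU system whose $2\times2$ blocks one must analyze anyway, it does not actually avoid the matrix-exponential asymptotics.
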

\begin{proof}
Since $(\hat{\q}_{k},\hat{\p}_{k})_{k\in \NN}$ is a Markov processes, it is sufficient to show that the transition probabilities converge appropriately, i.e., 
\[
\Pi \left (\hat{\Phi}^{\textup{gle-BAOAB}}_{\deltat,(\varepsilon^{-1},\varepsilon^{-2})}(q,p,s) \right)  \xrightarrow[\varepsilon \to 0]{\mathrm{law}} \hat{\Phi}^{\textup{ld-BAOAB}}_{\deltat}(q,p),
\]
for all  $(q,p,s) \in \xDomain$, where $\hat{\Phi}^{{\textrm{ld-BAOAB}}}_{\deltat,\mu}$ denotes the stochastic flow map of the \textup{ld-BAOAB} splitting scheme, and $\Pi  :\,(q,p,s) \mapsto (q,p)$ denotes the projection operator on the position and momentum component. 

The two methods only differ in terms of their respective O-steps. It is therefore sufficient to show that in the limit $\varepsilon \rightarrow 0$, these become identical in distribution, which is exactly the case if
\[
\lim_{\epsilon \rightarrow 0} \exp \left (-\deltat \Gammabf^{(\epsilon^{-1},\epsilon^{-2})}  \right )
= \begin{pmatrix} \exp( -\deltat {\bm D}_{a}^{2}{\bm D}_{b}^{-1}) & \0 \\ \0 & \0 \end{pmatrix}.
\]
where \[
\Gammabf^{(\epsilon^{-1},\epsilon^{-2})}=\begin{pmatrix}
\0 &  -  \epsilon^{-1}{\bm D}_{a}\\
  \epsilon^{-1} {\bm D}_{a} &  \epsilon^{-2}{\bm D}_{b}
\end{pmatrix} .
\]
We show this by applying a suitable similarity transformation:  without loss of generality let  $\deltat = 1$, and consider the orthogonal matrix,
\[
{\bf O}=\hat{\I}_{2n-1,2n}^{(2n)}\hat{\I}_{2n-3,2n-1}^{(2n)}\hdots  \hat{\I}_{5,n+3}^{(2n)}  \hat{\I}_{3,n+2}^{(2n)}\hat{\I}_{1,n+1}^{(2n)},
\]
where $\hat{\I}_{i,j}^{(2n)}$ denotes the elementary matrix whose action when multiplied from the left to a matrix ${\bm A}\in \RR^{2n\times 2n}$ corresponds to a swap of $i$-th and $j$-th rows of ${\bm A}$, so that 
\[
\Gammabf^{(\epsilon^{-1},\epsilon^{-2})}
=
{\bf O}^{\trans}
\diag \left ( 
A_{1}^{\epsilon},A_{2}^{\epsilon},\dots, A_{n}^{\epsilon} \right ) 
{\bf O},
\quad 
\text{with} ~
A_{i}^{\epsilon}  =\begin{pmatrix} 0 & - \epsilon^{-1}a_{i} \\ \epsilon^{-1}a_{i} & \epsilon^{-2}b_{i}\end{pmatrix}, ~ 1\leq i \leq n.
\]
By \cref{lem:exp:limit} we have $\lim_{\epsilon \rightarrow 0} A_{i}^{\epsilon} = B_{i}$ with $B_{i}= \begin{pmatrix} e^{-a_{i}^{2}/b_{i}} & 0 \\ 0 & 0 \end{pmatrix}$. Thus, 
\[
\begin{aligned}
\lim_{\epsilon\rightarrow 0}\exp(-\Gammabf^{(\epsilon^{-1},\epsilon^{-2})} ) &= \lim_{\epsilon\rightarrow 0} {\bf O}^{\trans}
 \diag \left (\exp(-A_{1}^{\epsilon}),\dots, \exp(-A_{n}^{\epsilon}) \right )   {\bf O}\\
 &=
 {\bf O}^{\trans} \diag \left ( \lim_{\epsilon\rightarrow 0} \exp(-A_{1}^{\epsilon}),\dots,  \lim_{\epsilon\rightarrow 0}\exp(-A_{n}^{\epsilon}) \right )   {\bf O}\\
 &=
 {\bf O}^{\trans} \diag \left (B_{1},
 \dots, B_{n}\right )   {\bf O}
= \begin{pmatrix} \exp( -\deltat {\bm D}_{a}^{2}{\bm D}_{b}^{-1}) & \0 \\ \0 & \0 \end{pmatrix}.
\end{aligned}
\]
\end{proof}

\subsection{Overdamped limit}\label{sec:disc:overdamped}
When applied to the rescaled process \eqref{eq:GLE:mark:rescaled} with  $\lambda = 1, \mu_{2} =\epsilon^{-1},\; \mu_{3}=\epsilon^{-1}$ the gle-BAOAB method reduces to the BAOAB-limit method (``Leimkuhler-Matthews method'') of \cite{Leimkuhler2013a} in the asymptotic limit $\epsilon \rightarrow 0$ as shown in the following:
\begin{theorem}\label{prop:method:od:limit}
Let $(\q_{k},\p_{k},\s_{k})_{k\in \NN}$ be the Markov chain obtained by with the \textup{gle-BAOAB} method $\hat{\Phi}^{\textup{gle-BOAOB}}_{\deltat,(\epsilon^{-1},\epsilon^{-1})}$ with $\p_{0} \sim \mathcal{N}(\0,\I_{n})$. Let $(\widetilde{\q}_{k})_{k\in \NN}$ denote the Markov chain generated by the BAOAB-limit method,
\begin{equation}\label{alg:lm}
\widetilde{\q}_{k+1} \gets  \widetilde{\q}_{k} - \tilde{\deltat} \Lambda \nabla U(\q_{k}) +\sqrt{2 \tilde{\deltat} \Lambda} \frac{1}{2}(\widetilde{\Rand}_{k} +\widetilde{\Rand}_{k+1} ),
\end{equation}
 with $ \widetilde{\q}_{0}$ and $\q_{0}$ being identically distributed, $\widetilde{\Rand}_{k} \sim \mathcal{N}( \0,\I_{n}),\, k\in \NN$ independent, and stepsize $\tilde{\deltat} = \deltat^{2}/2$, and $\Lambda =\I_{n}$. Then, for all $N\in \NN$, we have
\[
(\q_{k})_{0\leq k \leq N}  \xrightarrow[\varepsilon \to 0]{\mathrm{law}} (\hat{\q}_{k})_{0\leq k\leq N}.
\]
\end{theorem}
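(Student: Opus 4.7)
The plan is to mirror the strategy of the proof of \cref{col:discrete:WN}: since only the O-step $\Phi^{{\rm O}}_{\deltat,\mu}$ of the splitting depends on $\epsilon$ through $\mu=(\epsilon^{-1},\epsilon^{-1})$, it suffices to identify its weak limit as $\epsilon\to 0$ and then lift that to convergence of the Markov chain through a coupling/continuous-mapping argument, using that $\Phi^{{\rm A}}_{\deltat/2}$ and $\Phi^{{\rm B}}_{\deltat/2}$ are $\epsilon$-independent and locally Lipschitz. The analytical heart is to show that $\lim_{\epsilon\to 0}\bm F^{\mu}_{\deltat}=\bm 0$ and $\lim_{\epsilon\to 0}\bm S^{\mu}_{\deltat}(\bm S^{\mu}_{\deltat})^{\trans}=\diag(\I_n,\Q)$. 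Exactly as in the proof of \cref{col:discrete:WN}, an orthogonal permutation block-diagonalises $\Gammabf^{\mu}$ into $n$ blocks $A_i^{\epsilon}$ whose eigenvalues $\tfrac{1}{2\epsilon}(b_i\pm\sqrt{b_i^2-4a_i^2})$ have real parts of order $\epsilon^{-1}$; an explicit diagonalisation (over $\mathbb{C}$ if $b_i^2<4a_i^2$) shows that every entry of $\exp(-\deltat A_i^{\epsilon})$ is $O(e^{-c_i\deltat/\epsilon})$ for some $c_i>0$. The discrete Lyapunov identity $\bm S^{\mu}_{\deltat}(\bm S^{\mu}_{\deltat})^{\trans}=\diag(\I_n,\Q)-\bm F^{\mu}_{\deltat}\diag(\I_n,\Q)(\bm F^{\mu}_{\deltat})^{\trans}$ then yields the second limit, and hence in the limit the O-step produces a fresh Gaussian draw $(\zeta_k,\xi_k)\sim\mathcal N(\bm 0,\diag(\I_n,\Q))$ independent of its input $(\p_k,\s_k)$.

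Substituting this limiting O-step into the composition $\Phi^{{\rm B}}_{\deltat/2}\circ\Phi^{{\rm A}}_{\deltat/2}\circ\Phi^{{\rm O}}_{\deltat,\mu}\circ\Phi^{{\rm A}}_{\deltat/2}\circ\Phi^{{\rm B}}_{\deltat/2}$ and tracing through the five substeps (using $\M=\I_n$) yields the limiting recursion
\[
\q_{k+1}=\q_k+\tfrac{\deltat}{2}(\p_k+\zeta_k)-\tfrac{\deltat^2}{4}\nabla U(\q_k),\quad \p_{k+1}=\zeta_k-\tfrac{\deltat}{2}\nabla U(\q_{k+1}),\quad \s_{k+1}=\xi_k,
\]
so that the $\s$-variables decouple. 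For $k\ge 1$ the incoming momentum $\p_k=\zeta_{k-1}-\tfrac{\deltat}{2}\nabla U(\q_k)$ is the B-output of the preceding iteration, and substituting this back collapses the two $\tfrac{\deltat^2}{4}\nabla U(\q_k)$ contributions into
\[
\q_{k+1}=\q_k-\tfrac{\deltat^2}{2}\nabla U(\q_k)+\tfrac{\deltat}{2}(\zeta_{k-1}+\zeta_k),
\]
which is precisely the Leimkuhler--Matthews recurrence \eqref{alg:lm} with $\tilde\deltat=\deltat^2/2$, $\Lambda=\I_n$ and $\widetilde\Rand_k:=\zeta_{k-1}$, since $\sqrt{2\tilde\deltat}\cdot\tfrac12=\deltat/2$ and the $(\zeta_k)_{k\ge 0}$ are i.i.d.\ $\mathcal N(\bm 0,\I_n)$. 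To promote this one-step identity to convergence in law of the finite-dimensional distributions, I would couple the $\epsilon>0$ chain and the limit chain by driving the O-step of gle-BAOAB and the fresh draws of the limit chain with a common sequence of standard normal innovations $\Rand_k$; the uniform-on-compacts convergence of $\bm F^{\mu}_{\deltat},\bm S^{\mu}_{\deltat}$ combined with the local Lipschitz continuity of $\Phi^{\rm A},\Phi^{\rm B}$ then allows a straightforward induction on $k$ to give convergence in probability along the coupling, hence the claimed convergence in law of $(\q_k)_{0\le k\le N}$.

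The main obstacle lies in the first step: because $\p_0\sim\mathcal N(\bm 0,\I_n)$ is not itself the output of a preceding O-step, the limiting formula produces $\q_1=\q_0+\tfrac{\deltat}{2}(\p_0+\zeta_0)-\tfrac{\deltat^2}{4}\nabla U(\q_0)$, with drift only $-\tilde\deltat/2\,\nabla U(\q_0)$ rather than $-\tilde\deltat\,\nabla U(\q_0)$ as produced by \eqref{alg:lm} at $k=0$. Noise at $k=0$ matches by identifying $\widetilde\Rand_0\stackrel{\mathrm d}{=}\p_0$ (legitimate since $\p_0\sim \mathcal N(\bm 0,\I_n)$), but the missing half of the initial drift must be absorbed into the interpretation of $\widetilde\q_0=\q_0$ up to a single deterministic $O(\deltat^2)$ transient; from $k\ge 1$ onwards the two chains coincide in law and the theorem follows.
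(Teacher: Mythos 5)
Your proposal follows essentially the same route as the paper's own proof: for $\mu=(\epsilon^{-1},\epsilon^{-1})$ the matrix $\Gammabf^{\mu}$ is $\epsilon^{-1}$ times a fixed stable matrix, so ${\bm F}^{\mu}_{\deltat}\to\0$ and hence ${\bm S}^{\mu}_{\deltat}({\bm S}^{\mu}_{\deltat})^{\trans}\to\diag(\I_{n},\Q)$; the O-step therefore degenerates to an independent Gaussian resampling of $(\p,\s)$, the $\s$-channel decouples, and telescoping the B half-steps of consecutive iterations yields the Leimkuhler--Matthews recurrence with $\tilde{\deltat}=\deltat^{2}/2$. Your explicit substep tracing and the eigenvalue argument for ${\bm F}^{\mu}_{\deltat}\to\0$ supply useful detail that the paper asserts without proof. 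You have also correctly spotted the initialization mismatch: since $\p_{0}$ is drawn fresh rather than being the output of a preceding B half-step, the first position update has drift $-\tfrac{\deltat^{2}}{4}\nabla U(\q_{0})$ rather than $-\tfrac{\deltat^{2}}{2}\nabla U(\q_{0})$. The paper sidesteps this with the terse annotation ``$\Rand_{-1}=\p_{0}$'', which shifts the noise index so that the BAOAB momentum plays the role of the zeroth Leimkuhler--Matthews innovation, but this does not account for the halved drift in the first step. Your closing claim that ``from $k\ge 1$ onwards the two chains coincide in law and the theorem follows'' is too optimistic, though: the deterministic offset $\tfrac{\deltat^{2}}{4}\nabla U(\q_{0})$ at $k=1$ does not vanish as $\epsilon\to 0$ (only $\epsilon$, not $\deltat$, is sent to zero) and it propagates through all later iterates. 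A clean fix, which you essentially suggest, is to take $\p_{0}$ to be the B half-step image of a fresh innovation, i.e.\ $\p_{0}=\widetilde{\Rand}_{0}-\tfrac{\deltat}{2}\nabla U(\q_{0})$, after which the limiting gle-BAOAB position chain agrees in law with \eqref{alg:lm} at every step. This is a wrinkle in the statement of the theorem rather than a gap specific to your argument; otherwise the two proofs are essentially the same.
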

\begin{proof}
For $\mu=(\varepsilon,\varepsilon^{-1})$, we have $\Gammabf^{\mu} = \varepsilon^{-1}\begin{pmatrix} 
\0 & -{\bm D}_{a}\\
{\bm D}_{a}&{\bm D}_{b}
\end{pmatrix}
$, thus,
\[
\F=  \exp \left ({-\frac{\deltat}{\epsilon}\begin{pmatrix} 
\0 & -{\bm D}_{a}\\
{\bm D}_{a}&{\bm D}_{b}
\end{pmatrix}} \right )\rightarrow {\bm 0} \quad  \text{as} \quad  \epsilon \rightarrow 0,
\]
 and therefore also $\S\rightarrow \beta^{-1/2} 
\I_{n+m}, \text{ as }\;\epsilon \rightarrow 0$.
Thus, 
\[
\lim_{\epsilon\rightarrow 0}\Phi^{\rm O}_{\deltat,(\epsilon^{-1},\epsilon^{-1})}(\q,\p,\s)= (\q, \beta^{-1/2} \Rand), \; \Rand \sim \mathcal{N}( {\bm 0}, \I_{n+m}),
\]
which removes any coupling between the auxiliary variable $\s$ and $(\q,\p)$. Consequently, in the limit of $\varepsilon \rightarrow 0$ we can disregard the $\s$-component in the corresponding updating sequence of the positions and momenta. Moreover, since the momentum variables are independently resampled at every iteration, we can eliminate the momentum component from the  updating sequence to obtain \eqref{alg:lm} with $\Lambda = \I_{n}$, $\tilde{\deltat} = \deltat^{2}/2$, $\Rand_{-1} = \p_{0}$.
\end{proof}

\section{Numerical experiments}\label{sec:numExp}
In this section we  assess the performance of the splitting methods which we introduced in \cref{sec:numint} in numerical experiments. 
\subsection{Comparison of proposed splitting schemes}\label{sec:comp:HOU:methods}
We first compare the performance of the methods discussed in this article against each other. For this purpose we consider a simple QGLE on a one-dimensional positional domain with potential function
\begin{equation}\label{dwpotential}
U_{DW}(\q) =  \frac{1}{2}\q^{2} + \sin(1/4 + 2 \q), 
\end{equation}
which is  an uneven double-well. We evaluate the performance in terms of the incurred stepsize-dependent discretization bias for observables which are purely functions of the position variable. For the parameterization of the noise process in the GLE we consider the memory kernels
\begin{align}\label{kernel:prony1}
\K (t) = 2^{r}K(t 2^{r}), \quad K(t) := \frac{5}{2} \exp(-t/4) + \frac{1}{2}\exp(-t/8),
\end{align}
where we let $r$ take values in $\{0,1,2\}$. The $r$-dependent parameterization of the memory kernels is chosen such that in the limit of $r \rightarrow \infty$, the corresponding GLE approaches an underdamped Langevin equation.

We consider as an error measure 
\[
\error_{\rm MAE}((\q_{k})_{1\leq k\leq N}) 
= n_{B}^{-1}\sum_{i=1}^{n_{B}}\abs*{\left (  \frac{1}{N} \sum_{k=0}^{N-1} \mathbbm{1}_{B_{i}}(\q_{k}) - \int_{\xDomain} \mathbbm{1}_{B_{i}}(\q) \muinv(\dd \x)  \right )},
\]
where the equal sized bins $B_{i} \subset \RR ,i=1,\dots,n_{B}$ are chosen such that they form a partition of an interval $[a,b] \subset \RR$, which contains $99.99\%$ of the probability mass of the Gibbs measure associated with $\RR$. The quantity $\error_{\rm MAE}((\q_{k})_{1\leq k\leq N})$ may be considered as the mean approximate error (MAE) of the discretization bias incurred for the observables $\varphi_{B_{i}}: \q \mapsto  \mathbbm{1}_{B_{i}}(\q), i=1,\dots,n_{B}$, or, as an estimate of the total variation distance between the perturbed invariant measure $\muinv_{\deltat}$ and the exact target measure $\muinv$. 

In total, $100$ trajectories, all initialized in accordance with the exact equilibrium distribution $\muinv$, were simulated over a physical time period of length $T=\deltat N = 10^{7}$ to obtain the statistics.

Figure \ref{plot:comp:new} shows $\error_{\rm MAE}$ for the splitting schemes discussed in \cref{sec:numint}. All methods displayed are by construction second order. Differences in performance are thus measured in terms of the magnitude of the corresponding pre-factors of the leading error term. We find that the discretization error incurred in \textup{gle-OBABO} and \textup{gle-OABAO} is comparable and is not noticeably affected by the parameterization of the memory kernel. In comparison to that, the discretization error of \textup{gle-BAOAB} and \textup{gle-ABOBA} is smaller, and the accuracy of \textup{gle-BAOAB} improves significantly with increasing value of $r$ in the parameterization of the memory kernel. 
\begin{figure}[ht]
\hspace{-.5cm}
\includegraphics[width=1\textwidth]{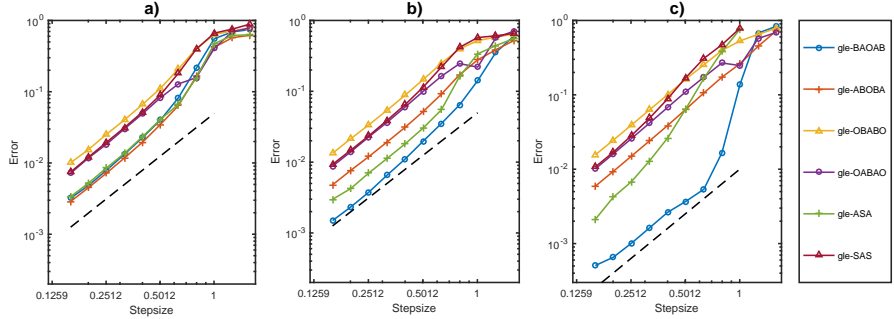}
\caption[]{Log-Log plot of the MAE of numerical integrators proposed in this article. Results for the GLE with potential function \eqref{dwpotential} and memory kernel \eqref{kernel:prony1} with $r=0,1,2$, are shown in panel a), b) and c), respectively.  Any missing error value indicates numerical instability of the respective method for the corresponding stepsize. The dashed black line corresponds to a second order decay. Details on the integrators \textup{gle-ASA} and \textup{gle-SAS} can be found in \cref{sec:construction:stochastic-verlet}. 
}\label{plot:comp:new}
\end{figure}

\subsection{Comparison with previously proposed GLE schemes}\label{sec:comp:methods}
We next compare the performance of gle-BAOAB with methods previously proposed in the literature using the same setup as in \cref{sec:comp:HOU:methods}. We compare the gle-BAOAB method with the methods proposed in \cite{Baczewski2013a} (BB-BAOB, BB-BACOCAB), \cite{stella2014generalized} (KLS-OBOAB), and \cite{Ceriotti2010} (gle-OBABO). These methods are all constructed as weak second-order schemes.

For moderate variance and slowly decaying autocorrelation of the noise-process (that is $r=0,1$) we observe that the error incurred by the methods BB-BACOCAB and KLS-OBABO is very similar to the error  of the {\rm gle-BAOAB} method (Figure \ref{plot:comp:other}, a,b). For all choices of the memory kernel, the error in gle-OBABO and BB-BAOB is at least by a factor of 10 higher than the error of gle-BAOAB, and this factor increases further with increasing value of $r$. Similarly, with an increasing value of $r$, the accuracy of the KLS-OBABO method decreases and comparison of gle-BAOAB, and the maximum admissible stepsize of BB-BAOB and BB-BACOCAB decreases significantly, while the maximum admissible step size for schemes discussed in \cref{sec:numint} is not affected. 
The high accuracy of BB-BACOCAB is not surprising as the authors in \cite{Baczewski2013a} specifically design this method for the sampling of accurate configurational averages. Interestingly, the KLS-OBABO method exhibits comparable accuracy even though the construction of this numerical scheme is not based on a systematic analysis of the discretization error in configurational averages. Finally, it is important to note that the scope of the memory kernels to which the methods proposed in \cite{Baczewski2013a} are applicable is very limited in comparison to the class of memory kernels which can be simulated using gle-BAOAB and gle-OBABO.
\begin{figure}[ht]
\hspace{-.5cm}
\includegraphics[width=1\textwidth]{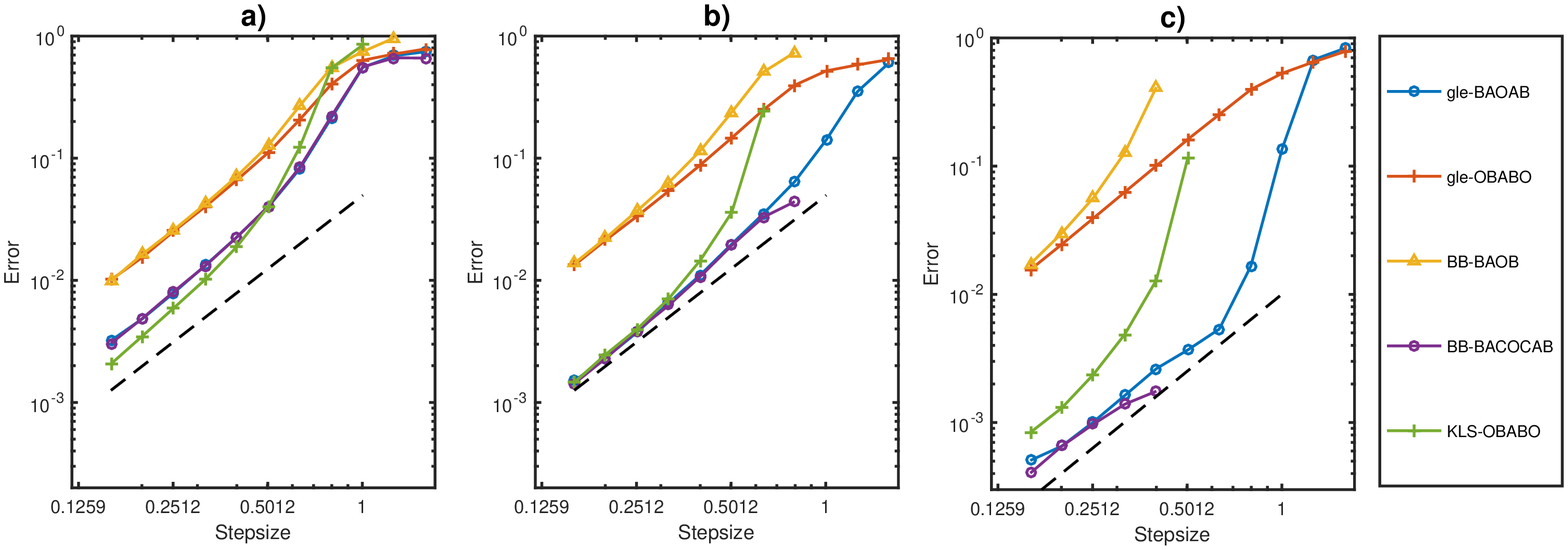}
\caption[]{Different integrators but otherwise same setup as in \cref{plot:comp:new}. 
}\label{plot:comp:other}
\end{figure}

\subsection{Parameter-dependent accuracy of {gle-BAOAB}}
In order to support the results derived by the singular perturbation ansatz in  \cref{sec:superconv}, we evaluate the sampling accuracy of {\rm gle-BAOAB} when applied to a GLE with a simple exponentially decaying memory kernel, i.e., 
\begin{equation}
\K(t) = \gamma e^{-t/\tau},\gamma>0,\tau>0,
\end{equation}
and the potential function \eqref{dwpotential}. 
As predicted we find that the discretization bias decreases as the overdamped limit is approached  (See \cref{plot:comp:params} b). Moreover, for parameter values $\lambda = 128, \tau = 1/16$, we find the predicted 4th order decay of the discretization bias as $\deltat$ tends to $0$. For the chosen range of parameter values we further observe (i) a decrease of the MAE in the white noise limit (\cref{plot:comp:params} {\textbf a)}.), (ii) a decrease of the MAE for fixed decay rate $\tau=1$ as the pre-factor $\gamma$ increases  (\cref{plot:comp:params} {\textbf d)}.), (iii) no systematic change of the magnitude of the MAE  for fixed pre-factor $\gamma=4$ and varying decay rate $\tau$  (\cref{plot:comp:params} {\textbf c)}.).
\begin{figure}[ht]
\includegraphics[width=.8\textwidth]{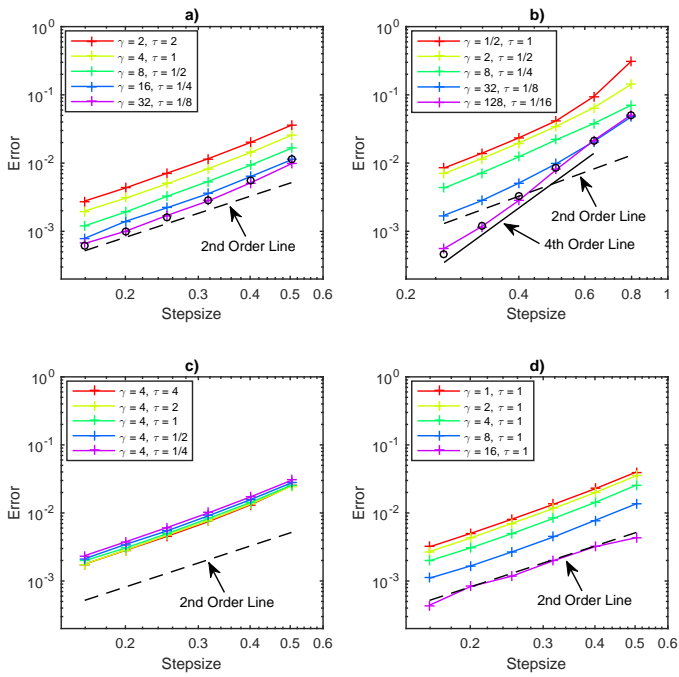}
\caption[]{Log-Log plot of stepsize vs. MAE of {\rm gle-BAOAB} applied to the GLE with potential function \eqref{dwpotential} and memory kernel $K(t)= \gamma e^{-t/\tau}$. Black circles show the observed MAE of the respective limiting dynamics. 
}\label{plot:comp:params}
\end{figure}

\subsection{Application to Bayesian posterior sampling}\label{sec:Hidalgo}
In this section we consider Bayesian Gaussian mixture model as a simple application of the discussed sampling methods and we use this application to demonstrate how the gle-BAOAB method when used in combination with the GLE-dynamics developed in \cite{Ceriotti2010} results in a sampling scheme which has drastically improved sampling properties in comparison to BAOAB discretizations of the underdamped Langevin equation as well as in comparison to the sampling scheme proposed in the above mentioned reference, which in the language used in this article corresponds to the \textup{gle-OBABO} method.

As a benchmark system we consider a Bayesian Gaussian mixture model applied to the Hidalgo stamp dataset \cite{Izenman1988}, which consists of the measurements $\{x_{i}\}_{i=1}^{N} \subset \RR$ of the thickness of $N=482$ postage stamps from the 1872 Hidalgo issue of post stamps.  We parametrize the model similarly to as described in \cite{Stoltz2012}; (See also \cite{Richardson1997} and \cite{Jasra2005}). That is, we choose the number of components as $N_{c}=3$ and assume isotropic Gaussian components resulting in a parameter vector $\q = ( \left ( {\bf w}_{k})_{1\leq k \leq 3}, (\mubf_{k}, {\bm \lambda}_{k}\right )_{1\leq k \leq 3}, \beta)\in {\bf \Delta}^{3}\times \RR^{6} \times \RR$,
where ${\bf \Delta}^{3}$ denotes the standard simplex in $\RR^{3}$, and ${\bf w}_{k}$ is the weight parameter, and $\mubf_{k}, {\bm \lambda}_{k}$ the mean and precision of the $k$th Gaussian component, respectively, and $\beta\in \RR$ denotes an additional hyper-parameter of the prior distribution.  The resulting target distribution is then given as the Gibbs measure of the corresponding negative log-posterior function
\begin{equation}
U(\q) = 
-\sum_{i=1}^{N}  \log p \left (  x_{i} \given \q \right  )  - \log p_{\rm prior}(\q),
\end{equation}
where the exact form of the likelihood function $p(x_{i} \given \q )$ and the prior $p_{\rm prior}(\q)$ are both specified in \cref{supp:sec:hidalgo}. 

We parametrize both the {\rm gle-BAOAB} scheme and the {\rm gle-OBABO} scheme with the pre-optimized memory kernel \textup{kv-8-8} obtained from the website GLE4MD \cite{gle4md} (see also \cref{sec:opt:memory:kernel}). 
We compare the performance of the sampling schemes 
\begin{enumerate}[label=(\roman*)]
\item   in terms of the observed discretization bias which we measure by the relative error incurred for the variable specific configurational temperatures 
\[
\varphi_{CT,i}(\q) = \q_{i}\partial_{\q_{i}}U(\q), \quad i=1,\dots,9.
\]
\item in terms of mixing which we measure by estimates of the integrated autocorrelation times  
\[
\tau_{{i}} = \int_{0}^{\infty} \EE \left [ (\q_{i}(t) - \mu_{\q_{i}}) ( \q_{i}(0)-\mu_{\q_{i}}) \right ]  \dd t, \quad i=1,\dots,9,
\]
where $\q(0)\sim \muinv$, and $\EE[\cdot]$ is the expectation with respect to $\q(0)$ and the Wiener process $\W$ in \eqref{eq:qgle}. 
\end{enumerate}
For \textup{ld-BAOAB} we considered the commonly used parameterization with a  single scalar friction coefficient, i.e., $\GammaLD = \gamma \I_{n}$. The simulation run corresponding to the parameter values $\gamma=1.0, \deltat = .01$ was obtained as the result of minimizing the integrated autocorrelation time for the slowest parameters  by varying the stepsize after fixing the friction coefficient to $\gamma=1.0$. The simulation run corresponding to the parameter values $\gamma=.1$ with $\deltat = .01$ was obtained as the result of minimizing the integrated autocorrelation time for the ``slowest parameter'' (i.e., the parameter with the largest associated integrated autocorrelation time) by simultaneously optimizing both the stepsize as well as the friction coefficient $\gamma$. The results reported for \textup{gle-BAOAB} and \textup{gle-OBABO} use a stepsize $\deltat = .02$, which was determined approximately as the maximum admissible stepsize with a few (short!) test runs. 
We find that in terms of sampling efficiency which we measured in terms of the integrated autocorrelation time of the ``slowest'' sampled parameter $\lambda_{1}$, the GLE schemes clearly outperform these as \cref{fig:hidalgo} shows. Between the GLE schemes we find that the discretization error in the sample obtained from gle-BAOAB is significantly smaller than the discretization error in the sample obtained with \textup{gle-OBABO}. The improvement in terms of the maximum admissible stable stepsize of the GLE methods in comparison to the Langevin schemes is an interesting feature.  Presumably, this is due to resonance effects which occur in the discretized dynamics of the underdamped Langevin due to an insufficient damping of fast frequency modes for the tuned value of the friction coefficient.
\begin{figure}[ht]
\includegraphics[width=1.0\textwidth]{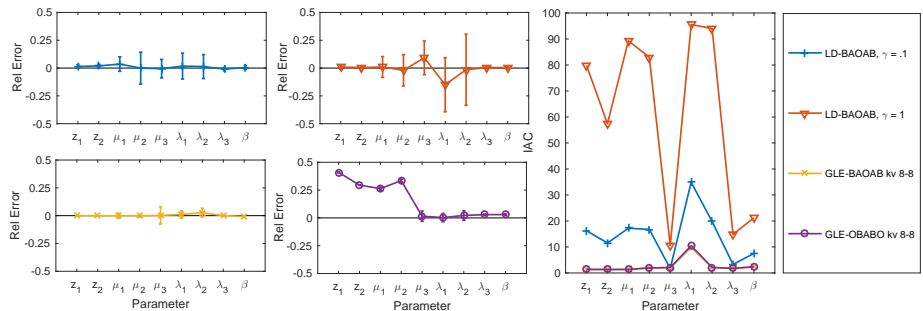}
\caption[]{Sampling statistics for the numerical experiments performed on the Hidalgo stamp dataset. The rightmost plot shows the integrated auto-correlation time (labeled as IAC) for each sampled parameter of the Gaussian mixture model. The remaining four plots on the left shows the discretization bias incurred for the variable specific configurational temperature. 
}\label{fig:hidalgo}
\end{figure}

\section*{Acknowledgments}
Both authors thank Gabriel Stoltz for helpful discussions. The research of B. Leimkuhler and M. Sachs was supported by the ERC project RULE (grant number 320823).  Prof. Leimkuhler was further supported by the Engineering and Physical Sciences Research Council under grant EPSRC EP/P006175/1 "Data-Driven Coarse-Graining using Space-Time Diffusion Maps". The work of M. Sachs was supported by the Statistical and Applied Mathematical Sciences Institute (grant DMS-1638521) and Duke University.

\bibliographystyle{siamplain}
\bibliography{refs5-2.bib}

\newpage
\appendix

\def\proj{P}

\section{Proof of \cref{prop:ergodicity:discrete}}\label{sec:proofs}
We can write the recurrence equation of the discrete gle-BAOAB solution $(\x_{k})_{k\in \NN} = (\q_{k},\p_{k},\z_{k})_{k \in \NN}$ of \eqref{eq:qgle} as

\begin{equation}\label{eq:recursion}
\begin{aligned}
\x_{k+1} 
 &=  \Psi_{{\rm B}}\Psi_{{\rm A}} \widetilde{\F}  \Psi_{{\rm A}} \Psi_{{\rm B}}\x_{k} +  \Psi_{{\rm B}} \Psi_{{\rm A}} \widetilde{\S} \begin{pmatrix} \0 \\ \Rand_{k} \end{pmatrix}
  +   \deltat R(\q_{k},\q_{k+1}),
\end{aligned}
\end{equation}
where $\Psi_{{\rm A}}, \Psi_{{\rm B}}, \widetilde{\F}, \widetilde{\S}$ are $\deltat$-dependent matrices as specified in \eqref{eq:F:S}, and 
\[
R(\q,\q') = \begin{pmatrix}
\frac{h}{2} \M^{-1}   \left [ - \M^{-1}\nabla U_{2}(\q) - (\I_{n},\0)\F \begin{pmatrix} \nabla U_{2}(\q)\\ \0 \end{pmatrix}\right ] \\
\frac{1}{2} \left [ - \nabla U_{2}(\q) - \nabla U_{2}(\q') \right ] \\
\0
\end{pmatrix}.
\]
\subsection{Lyapunov condition}\label{subsec:lya}
Recall the definition of $\Kc_{l}$  in \eqref{eq:def:lya} of \cref{prop:ergodicity:discrete} as 
\[
\Kc_{l}(x) = 1 + \norm{\x}_{\C}^{2l},\quad  l\in \NN, \qquad  \text{with} \quad \norm{\x}_{\C}^{2l}:= \left ( \x^{\trans} \C \x  \right )^{l}.
\]
Here we show that $\Kc_{l}, ~l\in \NN$ is indeed a family of Lyapunov functions satisfying \cref{as:ulc} provided that $\C$ is chosen such that the matrix $\G^{\trans} \C + \C\G$ is symmetric positive definite, where
\[
\G  = \begin{pmatrix} \0 & -\M^{-1} & \0 \\ \Omegabf & \Gammabf_{1,1} & \Gammabf_{1,2} \\   \0 & \Gammabf_{2,1} & \Gammabf_{2,2}\end{pmatrix},
\]
denotes the linear component of the drift term of \eqref{eq:qgle}. The existence of such a matrix $\C$ is equivalent to $-\G$ being a stable matrix. The latter follows as a special case of \cite[Lemma 3]{leimkuhler2017ergodic} under our assumption that $\Gammabf_{1,1}$ and $\M$ commute. 

To show this, it is sufficient to show the existence of suitable constants $a_{l}>0, b_{l}\in \RR$, so that 
\begin{equation}\label{eq:norm:Lya}
\forall \;  \x\in \xDomain,~ \forall \;  \deltat \in [0,\deltat^{*})\qquad \hat{\Pc}^{{\textnormal{gle-BAOAB}}}_{\deltat} \norm{\x}_{\C}^{2l} \leq e^{-a_{l}h}   \norm{\x}_{\C}^{2l}  + b_{l}, 
\end{equation}
for sufficiently small $\deltat^{*}>0$. This can be seen since \cref{eq:norm:Lya} implies
\[
\begin{aligned}
\hat{\Pc}^{{\textnormal{gle-BAOAB}}}_{\deltat}\Kc_{l}(x)&= 1 + \hat{\Pc}^{{\textnormal{gle-BAOAB}}}_{\deltat}\norm{\x}^{2l}_{\C}
\leq 1 + e^{-\deltat a_{l}} \norm{\x}^{2l}_{\C} + \deltat b_{l}\\
& \leq 1 - e^{-\deltat a_{l}}  +  e^{-\deltat a_{l} }\Kc_{l}(x) + \deltat b_{l}\\
& \leq  e^{-\deltat a_{l} }\Kc_{l}(x) + \deltat (b_{l} + 2 a_{l}),
\end{aligned}
\]
for all $x\in \xDomain, \deltat \in [0,\deltat^{*})$ for sufficiently small $\deltat^{*}>0$.

Central to showing the Lyapunov condition  \cref{eq:norm:Lya} are the following two Lemmas. \cref{lem:E:contraction} shows that $\Ed = \Psi_{{\rm B}}\Psi_{{\rm A}} \widetilde{\F}  \Psi_{{\rm A}} \Psi_{{\rm B}}$ possesses a spectral gap and defines a contraction in $\norm{\cdot}_{\C}$, \cref{lem:S:linear} ensures that the remaining terms in \eqref{eq:recursion} are $O(\deltat)$ as $\deltat \rightarrow 0$.



\begin{lemma}\label{lem:E:contraction}
There exists $\deltat^{*}>0$ and a constant $\lambda'>0$ so that 
\[
\forall \deltat \in [0,\deltat^{*}) : \quad \norm{\Ed}_{\mathcal{B}(\C)} \leq \exp(-\deltat (\lambda +\kappa) ),
\]
where $\norm{\cdot}_{\mathcal{B}(\C)}$ is the operator norm induced by $\norm{\cdot}_{\C}$, i.e., for a matrix $A\in \RR^{(2n+m)\times(2n+m) } $, we have
\[
\norm{A}_{\mathcal{B}(\C)} := \sup_{\x\in \RR^{2n+m}} \frac{\norm{Ax}_{\C}}{\norm{x}_{\C}}.
\]
\end{lemma}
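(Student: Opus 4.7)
The plan is to exploit the fact that $\Ed$ is a second-order Strang splitting approximation of the linear flow $\exp(-\deltat\G)$ associated with the drift matrix $\G$ displayed just above the statement, and then transfer the standard Lyapunov contraction estimate for $\exp(-\deltat\G)$ to $\Ed$ up to an $O(\deltat^2)$ perturbation. Concretely, I would first Taylor expand each factor: $\Psi_{\rm A} = \I + (\deltat/2)\A_0$ and $\Psi_{\rm B} = \I + (\deltat/2)\B_0$ are affine in $\deltat$, while $\widetilde{\F} = \exp(-\deltat\widetilde{\Gammabf}_{\M})$ satisfies $\widetilde{\F} = \I - \deltat\widetilde{\Gammabf}_{\M} + \tfrac{\deltat^2}{2}\widetilde{\Gammabf}_{\M}^2 + O(\deltat^3)$, where $\widetilde{\Gammabf}_{\M}=\diag(\0,\Gammabf_{\M})$ denotes the extension of $\Gammabf_{\M}$ acting only on $(\p,\s)$. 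Multiplying the five factors and collecting powers of $\deltat$, the first-order coefficient is $\tfrac{1}{2}\B_0 + \tfrac{1}{2}\A_0 - \widetilde{\Gammabf}_{\M} + \tfrac{1}{2}\A_0 + \tfrac{1}{2}\B_0 = -\G$, so
\[
\Ed = \I - \deltat\,\G + \deltat^2 M(\deltat),
\]
with $M(\deltat)$ a matrix depending smoothly on $\deltat$ and bounded in operator norm uniformly for $\deltat$ in any compact interval $[0,\deltat_0]$.

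Next I would compute the quadratic form
\[
\Ed^{\trans}\C\,\Ed = \C - \deltat\,(\G^{\trans}\C + \C\G) + \deltat^2 R(\deltat),
\]
where $R(\deltat)$ is likewise uniformly bounded in operator norm for $\deltat\in[0,\deltat_0]$. By the hypothesis on $\C$ (available because $-\G$ is stable, by the cited Lemma 3 of \cite{leimkuhler2017ergodic}), the matrix $\G^{\trans}\C + \C\G$ is symmetric positive definite, so there exists $\mu>0$ with $\G^{\trans}\C + \C\G \succeq 2\mu\,\C$; one may take $2\mu$ equal to the smallest eigenvalue of $\C^{-1/2}(\G^{\trans}\C + \C\G)\C^{-1/2}$. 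Similarly, uniform boundedness of $R(\deltat)$ gives a constant $K>0$ such that $R(\deltat) \preceq K\,\C$ for all $\deltat\in[0,\deltat_0]$. Combining these two estimates yields
\[
\Ed^{\trans}\C\,\Ed \preceq \bigl(1 - 2\mu\deltat + K\deltat^2\bigr)\C.
\]

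Finally I would shrink $\deltat_0$ if necessary so that $1 - 2\mu\deltat + K\deltat^2 \leq 1 - \mu\deltat \leq \exp(-\mu\deltat)$ for all $\deltat\in[0,\deltat_0]$ (which holds as soon as $\deltat_0 < \mu/K$). This gives
\[
\sup_{\x\neq 0} \frac{\x^{\trans}\Ed^{\trans}\C\,\Ed\,\x}{\x^{\trans}\C\x} \leq \exp(-\mu\deltat),
\]
i.e.\ $\norm{\Ed}_{\mathcal{B}(\C)}^2 \leq \exp(-\mu\deltat)$, yielding the claimed exponential contraction with rate $\lambda' = \mu/2$ (so one sets $\deltat^*:=\deltat_0$ and reads off the constants from $\mu,K$ above). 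The main technical obstacle is the bookkeeping in the multivariate Taylor expansion, in particular verifying rigorously that the first-order coefficient of $\Ed-\I$ is exactly $-\G$ (which is where the symmetry of the BAOAB construction and the identification of the drift are essential) and that the remainder $R(\deltat)$ admits a $\deltat$-uniform bound in the operator norm induced by $\C$. Everything else is a routine application of the discrete Lyapunov inequality.
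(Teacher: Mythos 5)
Your proof is correct and reaches the lemma's conclusion, but by a genuinely different route than the paper. The paper's proof argues in two stages: first it shows that the \emph{continuous} flow $\Ec=\exp(-\deltat\G)$ contracts in $\norm{\cdot}_\C$ by differentiating the quadratic form $\x^\trans\Ec^\trans\C\Ec\x$ and invoking Gr\"onwall's inequality; then it compares $\Ed$ to $\Ec$ via the Baker--Campbell--Hausdorff expansion, writing $\Ed=\exp(-\deltat(\G+R_2(\deltat)))$ with $\norm{R_2(\deltat)}_\C\leq c\deltat^2$, and finally transfers the contraction through an operator-norm perturbation bound $\norm{\Ed}_{\mathcal{B}(\C)}\leq\norm{\exp(-\deltat\G)}_{\mathcal{B}(\C)}\exp(c\deltat^3)$. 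Your approach skips the continuous flow entirely: you Taylor-expand the five-factor product directly to get $\Ed=\I-\deltat\G+\deltat^2 M(\deltat)$, then work with the discrete quadratic form $\Ed^\trans\C\Ed=\C-\deltat(\G^\trans\C+\C\G)+\deltat^2 R(\deltat)$ and apply the Lyapunov inequality $\G^\trans\C+\C\G\succeq 2\mu\C$ pointwise. This buys you something: it avoids both the convergence discussion for BCH and the final perturbation step, the latter being stated somewhat informally in the paper (matrix exponentials of non-commuting sums do not factor so simply), so your argument is if anything more airtight while also being more elementary. The cost is some bookkeeping in the multivariate expansion, which you rightly flag as the main technical burden. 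One small observational point: the first-order coefficient being $-\G$ is a consequence of mere \emph{consistency} of the splitting (the sum of the generators), not of symmetry; symmetry of the BAOAB ordering is what kills the $O(\deltat^2)$ term in the exponent of the paper's BCH formulation, but in your formulation it is not needed since the $\deltat^2$ correction is absorbed wholesale into the uniformly bounded $R(\deltat)$ and then dominated by the $\deltat$-linear Lyapunov decay for small $\deltat$. (You should also note that the inequality in the lemma statement contains an apparent typo, ``$\lambda+\kappa$''; as both your proof and the paper's actually establish, the bound is $\exp(-\deltat\lambda')$ for a suitable $\lambda'>0$, and your $\lambda'=\mu/2$ is a perfectly good choice.)
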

\begin{proof}
The result can be shown using the fact that $\Ed$ is a symmetric splitting of the matrix exponential $\Ec = \exp(-h\G)$ and $\Ec$ being contracting in $\norm{\ccdot}_{\C}$. The latter follows because
\begin{equation*}
\begin{aligned}
\partial_{\deltat}\x^{\trans} \Ec^{\trans}\C\Ec \x &=  -\x^{\trans} \Ec^{\trans}[\G^{\trans}\C + \C \G] \Ec  \x \\
&\leq - \frac{\sigma_{\min}[ \G^{\trans}\C + \C \G] }{\sigma_{\max}(\C)} \x^{\trans}\Ec^{\trans}\C\Ec \x,
\end{aligned}
\end{equation*}
which implies 
\begin{equation}\label{eq:Ec:contract}
\norm{\exp(-h\G)\x}_{\C}^{2} \leq \exp(-h\lambda) \norm{\x}_{\C}^{2}, \quad \lambda = \sigma_{\min}(\G^{\trans}\C + \C\G)/ \sigma_{\max}(\C),
\end{equation}
for all $h>0$ and all $\x \in \RR^{2n+m}$ by Gr\"{o}nwall's inequality. Now, rewriting $\Ed$ as
\begin{equation*}
\Ed = \exp\left ( -\frac{\deltat}{2} {\bf B} \right ) \exp\left ( -\frac{\deltat}{2} {\bf A} \right )  \exp \left (-\deltat  \begin{pmatrix} \0 & \0 \\ \0 & \Gammabf \end{pmatrix}  \right ) \exp\left ( -\frac{\deltat}{2} {\bf A} \right ) \exp\left ( -\frac{\deltat}{2} {\bf B} \right ),
\end{equation*}
with 
\[
{\bf A} = \begin{pmatrix}
\0 & \0  &\0 \\
-\Omegabf & \0 & \0 \\
 \0 & \0 & \0 
\end{pmatrix},
\quad
{\bf B} = \begin{pmatrix}
\0 & \M^{-1}  &\0 \\
\0 & \0 & \0 \\
 \0 & \0 & \0 
\end{pmatrix},
\]
we see that $\Ed$ is indeed a symmetric splitting approximation of $\Ec=\exp(-\deltat \G)$. 
For sufficiently small $\deltat>0$ the corresponding Baker-Campbell-Hausdorff expansion converges so that
\[
\Ed = \exp(- \deltat (\G+ R_{2}(\deltat))
\]
where the remainder term is of second order in $\deltat$, i.e., $\norm{R_{2}(\deltat)}_{\C} \leq c \deltat^{2}$ for some $c>0$. Thus in particular
\[
\norm{\Ed}_{\mathcal{B}(\C)} \leq  \norm{\exp(- \deltat \G)}_{\mathcal{B}(\C)}  \exp( \deltat^{3}c) \leq \exp(-\deltat \lambda  + c \deltat^{3} ),
\]
which implies the statement for sufficiently small choice of $\deltat^{*}>0$.
\end{proof}
\begin{lemma}\label{lem:S:linear} There is a constant $c>0$ so that $\norm{\S \S^{\trans}}_{\mathcal{B}(\C)} \leq c h$.
\end{lemma}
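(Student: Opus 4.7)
The plan is to reduce the claim to a first-order Taylor expansion of the covariance matrix $\S\S^{\trans}$ about $\deltat=0$. Since $\C$ is symmetric positive definite, the operator norm $\norm{\cdot}_{\mathcal{B}(\C)}$ is equivalent to the spectral norm: for any matrix $A$ of compatible dimension,
\[
\norm{A}_{\mathcal{B}(\C)} \leq \sqrt{\sigma_{\max}(\C)/\sigma_{\min}(\C)}\,\sigma_{\max}(A).
\]
Hence it suffices to establish $\sigma_{\max}(\S\S^{\trans}) \leq c'\deltat$ for some $c'>0$ uniformly in $\deltat\in(0,\deltat^{*}]$.

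Recalling that $\F = \exp(-\deltat\Gammabf_{\M})$ and writing $D := \beta^{-1}\begin{pmatrix}\M & \0\\ \0 & \Q\end{pmatrix}$, the defining identity for $\S$ reads
\[
\S\S^{\trans} = D - \F D\F^{\trans}.
\]
Next I would Taylor expand the matrix exponential: $\F = \I - \deltat\Gammabf_{\M} + \deltat^{2}R(\deltat)$, where $\norm{R(\deltat)}$ is bounded on any bounded interval of $\deltat$. Substituting into the expression above and collecting terms,
\[
\S\S^{\trans} = \deltat\bigl(\Gammabf_{\M} D + D\Gammabf_{\M}^{\trans}\bigr) + \deltat^{2}\widetilde{R}(\deltat),
\]
with $\norm{\widetilde{R}(\deltat)}$ uniformly bounded on $[0,\deltat^{*}]$. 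By Assumption~\ref{as:gle}\,\ref{as:gle:it:1} together with the definition $\Gammabf_{\M} = \Gammabf\,\mathrm{diag}(\M^{-1},\I_{m})$, the bracketed leading-order term equals $\beta^{-1}\Sigmabf\Sigmabf^{\trans}$ (up to the obvious block identifications), which in particular has bounded spectral norm. Taking spectral norms and absorbing higher-order corrections into the constant gives $\sigma_{\max}(\S\S^{\trans}) \leq c'\deltat$, and then norm equivalence yields the claim.

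The argument is essentially mechanical; no real obstacle arises because the map $\deltat\mapsto \S\S^{\trans}$ is analytic at $0$ and vanishes there. The only point that needs a sentence of justification is the reduction from $\norm{\cdot}_{\mathcal{B}(\C)}$ to the spectral norm, which follows from positive definiteness of $\C$. If the statement of the lemma is intended to apply to the embedded matrix $\widetilde{\S}\widetilde{\S}^{\trans}$ (as appears in the recurrence~\eqref{eq:recursion}), the identical argument goes through since embedding by zeros into the larger block-matrix does not increase the spectral norm.
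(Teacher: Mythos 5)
Your proof is correct and reaches the same conclusion by a genuinely different route. The paper's argument works directly with the quadratic form: it applies a Gr\"onwall inequality to $t\mapsto \z^{\trans}e^{-t\Gammabf^{\trans}}\Qw e^{-t\Gammabf}\z$ to obtain $\z^{\trans}\F^{\trans}\Qw\F\z \geq e^{-\kappa\deltat}\z^{\trans}\Qw\z$, and then bounds $1-e^{-\kappa\deltat}\leq \kappa\deltat$, which yields the $O(\deltat)$ estimate in one step without ever expanding $\F$. You instead Taylor expand $\F=\I-\deltat\Gammabf_{\M}+O(\deltat^{2})$ and observe, via the Lyapunov identity of Assumption~\ref{as:gle}\,\ref{as:gle:it:1} and the relation $\Gammabf_{\M}\Qw=\Gammabf\diag(\I_{n},\Q)$, that the leading term is exactly $\deltat\,\beta^{-1}\Sigmabf\Sigmabf^{\trans}$. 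Both are sound. The Gr\"onwall route is more economical (no remainder bookkeeping), whereas your expansion is more informative in that it identifies the $O(\deltat)$ coefficient as the diffusion covariance, which is exactly the Euler--Maruyama covariance one would expect; this would be the natural starting point if one later wanted sharper, explicit constants. Your reduction from $\norm{\cdot}_{\mathcal{B}(\C)}$ to the spectral norm via $\sqrt{\sigma_{\max}(\C)/\sigma_{\min}(\C)}$ is the same ``norm equivalence'' step the paper invokes, and your closing remark about the zero-padded embedding $\widetilde{\S}$ correctly addresses the dimensional mismatch between $\S\in\RR^{(n+m)\times(n+m)}$ and $\C\in\RR^{(2n+m)\times(2n+m)}$, which the paper handles implicitly by working with $\widetilde{\S}$ in its proof.
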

\begin{proof}
Since $\Sw^{\trans} \Sw$ is symmetric and positive semi-definite it is by norm equivalence sufficient to show that the largest eigenvalue of this matrix is bounded from above by $c\deltat$. Recall that by definition  $\x^{\trans}\Sw^{\trans} \Sw \x= \z^{\trans}  [ \Qw -\F^{\trans}\Qw\F ]\z$ for all $\x = (\q,\z) \in \RR^{2n+m}$. Similarly as in the derivation of \eqref{eq:Ec:contract} we can use Gr\"onwall's inequality to find $\z^{\trans}\F^{\trans}\Qw\F\z \geq \exp(-\kappa \deltat) \z^{\trans} \Qw \z $ for all $\z \in \RR^{n+m}$ with  $\kappa = \sigma_{\max}[ \Gammabf^{\trans}\Qw + \Qw \Gammabf] / \sigma_{\min}(\Qw)$. Using an upper linear bound of the exponential function this then implies 
\[
\begin{aligned}
\x^{\trans}\Sw^{\trans} \Sw \x = \z^{\trans}\Qw\z -\z^{\trans}\F^{\trans}\Qw\F\z &\leq  (1- \exp(-\kappa \deltat)) \z^{\trans}\Qw\z
&\leq c\deltat \z^{\trans}\Qw\z
\end{aligned}
\]
for all $\x= (\q,\z)\in \RR^{2n+m}$, suitably chosen $c>0$ and sufficiently small $\deltat>0$. This concludes the proof. 
 \end{proof}

Equipped with the above two lemmas, we can now show existence of suitable constants $a_{l}>0,b_{l}\in \RR$ so that \eqref{eq:norm:Lya} is satisfied. 
We start with the case $l=1$. Let $u_{\max} :=\sup_{(\q,\q') \in \RR^{2n}}\norm{R(\q,\q') }_{\C} < \infty$, then
\begin{equation}\label{eq:K1}
\begin{aligned}
\hat{\Pc}^{{\textnormal{gle-BAOAB}}}_{\deltat} \norm{x}_{\C}^{2} &= \EE [ \norm{\numint^{\textnormal{gle-BAOAB}}(\x)}_{\C}^{2} ]\\
&\leq  \norm{\Ed \x}_{\C}^{2} + 2 \deltat  \norm{\Ed\x}_{\C} u_{\max}+ \deltat^{2}u_{\max} +  \EE \left [ \norm{ \Psi_{{\rm B}} \Psi_{{\rm A}} \widetilde{\S} \begin{pmatrix} \0 \\ \Rand_{k} \end{pmatrix}}_{\C}^{2}  \right ]\\
& \leq e^{-2\deltat \lambda' } \norm{\x}^{2}_{\C} + 2 \deltat e^{-\deltat \lambda' }u_{\max}  \norm{\x}_{\C}    + h^{2} u_{\sup}^{2} +\underbrace{ \norm{ \Psi_{{\rm B}} \Psi_{{\rm A}}}_{\mathcal{B}(\C)} \norm{ \Sw}_{\mathcal{B}(\C)}^{2} \EE [ \Rand^{\trans}\C\Rand]}_{ = O(\deltat) \text{ by \cref{lem:S:linear} } } \\
\end{aligned}
\end{equation}
where the last inequality follows from \cref{lem:E:contraction}. Using the inequality
\begin{equation}\label{eq:x:epsilon}
x^{2l-b} \leq \varepsilon x^{2l} + \frac{1}{\varepsilon^{2l}},
\end{equation}
which holds for any value of $x \in \RR$, any non-negative $\varepsilon<1$ and any non negative integer with $b<2l$, as well as linear bounds on the exponential function, we find 
\[
\begin{aligned}
e^{-2\deltat \lambda' } \norm{\x}^{2}_{\C} + 2 \deltat e^{-\deltat \lambda' }u_{\max}  \norm{\x}_{\C}  
& \leq  \left [ e^{-2\deltat \lambda' } +2\varepsilon \deltat e^{-\deltat \lambda' } u_{\max} \right ]  \norm{\x}^{2}_{\C}  +  \frac{2 \deltat e^{-\deltat \lambda' }u_{\max} }{\varepsilon}\\
& \leq e^{-\deltat \lambda' }  \left [ 1 - \frac{\lambda'}{2}\deltat +2\varepsilon u_{\max} \deltat  \right ]   \norm{\x}^{2}_{\C} + \frac{2 \deltat u_{\max} }{\varepsilon},\\
& \leq e^{-\deltat \lambda' } \norm{\x}^{2}_{\C} +  \deltat \frac{8 u_{\max}^{2}}{\lambda'}, 
\end{aligned}
\]
for all $\deltat \in [0,\deltat^{*})$ provided that $\deltat^{*}>0$ is chosen sufficiently small. The last equality follows by choosing $\varepsilon = \frac{\lambda'}{4 u_{\max}}$. Together with the observation that by \cref{lem:S:linear} the last summand in the last line of \eqref{eq:K1} is in $O(h)$, this shows that for $l=1$ the Lyapunov condition of \eqref{eq:norm:Lya} is satisfied with $a_{1} = \lambda'$ and $b_{1}$ chosen sufficiently large.

For $l>1$, we can write  
\[
\begin{aligned}
\hat{\Pc}^{{\textnormal{gle-BAOAB}}}_{\deltat} \norm{\x}_{\C}^{2l}& =  \norm{\Ed \x}_{\C}^{2l} +  \EE [ R_{l}(\Ed\x,\Rand) ] 
\end{aligned}
\]
where the expectation of the remainder term $R_{l}(\Ed\x,\Rand)$ can be bounded from above by $P_{\deltat} \left (\norm{\Ec\x}_{\C} \right)$, where $P_{\deltat}$ is a polynomial with degree of at most $2l-1$ and coefficients which are products of multiples and/or powers of $\norm{\Sw}_{\C}^{2}$ and $\deltat u_{\max}$, (and thus by \cref{lem:S:linear}   behave as $O(\deltat)$ as $\deltat \rightarrow 0$). In particular, by inequality \eqref{eq:x:epsilon} 
\[
\EE [ R_{l}(\Ed\x,\Rand) ]  \leq P_{\deltat}(\norm{\Ed\x}_{\C}) \leq  \deltat \varepsilon c_{1} \norm{\Ed\x}_{\C}^{2l} +  \deltat \varepsilon^{-2l}c_{2}
\]
for all $\varepsilon, 0<\varepsilon<1$, where $c_{1}, c_{2}$ are some $l$ and $\deltat^{*}$ dependent positive constants. Therefore,
\[
\begin{aligned}
\hat{\Pc}^{{\textnormal{gle-BAOAB}}}_{\deltat} \norm{\x}_{\C}^{2l} &\leq   \norm{\Ed \x}_{\C}^{2l} + \deltat \varepsilon c_{1} \norm{\Ed\x}_{\C}^{2l} +  \deltat \varepsilon^{-2l}c_{2}
&\leq e^{- 2\lambda' l \deltat} ( 1+ \deltat \varepsilon c_{1} ) +  \deltat \varepsilon^{-2l}c_{2}\\
&\leq e^{- ( 2\lambda' l - \varepsilon c_{1}) \deltat } +  \deltat \varepsilon^{-2l}c_{2},
\end{aligned}
\]
so that \eqref{eq:norm:Lya} is satisfied for $a_{l} = 2\lambda' l - \varepsilon c_{1}>0$ and $b_{l} = \varepsilon^{-2l}c_{2}$ provided $\varepsilon>0$  is sufficiently small. 

\subsection{Minorization condition}

Denote by 
\[
\Psi_{\deltat}(\x,\Rand_{1},\Rand_{2}) =  \Phi^{\rm BAOAB}_{\deltat}( \Phi^{\rm BAOAB}_{\deltat}(\x,\Rand_{1}) ,\Rand_{2}),
\]
the stochastic flow map of the twice iterated gle-BAOAB scheme. Here, we explicitly include the Gaussian random variables $\Rand_{1}, \Rand_{2}$ of the first gle-BAOAB step and second gle-BAOAB step, respectively. In particular, $\left(\Pc^{2}_{\deltat}\varphi \right ) (\x) = \EE \left [ \varphi  \left (\Psi_{\deltat}(\x,\Rand_{1},\Rand_{2}) \right )  \right]$. We deduce the validity of \cref{as:umc} by (i) showing the validity of a localized minorization condition via Lemma 6.3 in \cite{benaim2015qualitative} and (ii) a compactness argument. Define
\[
\Psi_{\deltat,\x} : \RR^{2(m+n)} \rightarrow \xDomain,\quad (t_{1},t_{2}) \mapsto \Psi_{\deltat}(\x,t_{1},t_{2}).
\]
In order for the conditions of Lemma 6.3 in \cite{benaim2015qualitative} to be satisfied it is sufficient to show that the Jacobian $D_{t}\Psi_{\deltat,\x}$ is invertible for any value of $\x$ in $\xDomain$. Since $D_{t}\Psi_{\deltat,\x}$ is also continuous this in particular implies by the inverse function theorem that $\Psi_{\deltat,\x}$ is surjective for any  $\x\in \xDomain$. In order show the invertibility of $D_{t}\Psi_{\deltat,\x}$, we first notice that 
\[
\Psi_{\deltat}(\x,\Rand_{1},\Rand_{2})  = \Phi^{\rm BA}_{\deltat/2} \circ \Phi^{\rm \hat{O}ABA\hat{O}}_{\deltat}( \Phi^{\rm AB}(\x),\Rand_{1},\Rand_{2}),
\]
where $\Phi^{\rm \hat{O}ABA\hat{O}}_{\deltat}$ is such that $\Phi^{\rm \hat{O}ABA\hat{O}}_{\deltat}(\x,\frac{1}{\sqrt{2}}\Rand_{1},\frac{1}{\sqrt{2}}\Rand_{2})$ corresponds to the flow map of \textup{gle-OBABO}. Now,
\[
D_{t}\Psi_{\deltat,\x}(t_{1},t_{2}) =
D_{\hat{\x}} \Phi^{\rm BA}_{\deltat/2}(\hat{\x}) D_{t}\Phi^{\rm OABAO}_{\deltat}( \Phi^{\rm AB}(\x),t_{1},t_{2} ) \in \RR^{(2n+m) \times 2(n+m)}
\]
where $\hat{\x} = \Phi^{\rm OABAO}_{\deltat}( \Phi^{\rm AB}(\x),t_{1},t_{2} )$.
The Differentials in this expression have the form
\[
D_{\hat{\x}} \Phi^{\rm BA}_{\deltat/2}(\hat{\x}) = \begin{pmatrix}
\I_{n} & \frac{h}{2}\M^{-1} & \0 \\
-\frac{h}{2} \nabla^{2} U(\hat{\q} +\frac{h}{2}\M^{-1}\hat{\p}) & \I_{n} -\frac{h^{2}}{4} \M^{-1}\nabla^{2}U(\hat{\q} +\frac{h}{2}\M^{-1}\hat{\p}) & \0\\
\0 & \0 &\I_{m}
\end{pmatrix},
\]
where $\nabla^{2} U$ denotes the Hessian of the potential function, and
\[
D_{t}\Phi^{\rm \hat{O}ABA\hat{O}}_{\deltat}(\x)  = 
\begin{pmatrix}
\nabla_{t_{1}}\proj_{q}  \Phi^{\rm \hat{O}ABA\hat{O}}_{\deltat}(\x) & \0\\
\nabla_{t_{1}}\proj_{z}  \Phi^{\rm \hat{O}ABA\hat{O}}_{\deltat}(\x) & \nabla_{t_{2}}\proj_{\z}  \Phi^{\rm \hat{O}ABA\hat{O}}_{\deltat}(\x)
\end{pmatrix}
=
\begin{pmatrix}
\nabla_{t_{1}}\proj_{q}  \Phi^{\rm \hat{O}ABA\hat{O}}_{\deltat}(\x) & \0\\
\nabla_{t_{1}}\proj_{z}  \Phi^{\rm \hat{O}ABA\hat{O}}_{\deltat}(\x) & \S
\end{pmatrix},
\]
where we suppressed the arguments $t_{1},t_{2}$, and denote by $\proj_{q} : \x\mapsto  \q$ and $ \proj_{z} : \x \mapsto \z $ the orthogonal projection operators onto the position components and $z$-components, respectively. Since the absolute values of the eigenvalues of $\nabla^{2} U(\q)$ are uniformly bounded from above by \cref{as:gle:2}, \ref{as:gle:it:4}, it follows that $D_{\hat{\x}} \Phi^{\rm BA}_{\deltat/2}(\hat{\x}) $ is invertible for any $\hat{\x}$ provided that the stepsize $\deltat>0$ is chosen sufficiently small. Likewise, observing that 
\[
D_{t_{1}}\proj_{\q}  \Phi^{\rm \hat{O}ABA\hat{O}}_{\deltat}(\x)   =  \deltat\I_{n} \M^{-1} \proj_{\q}\S -\frac{\deltat^{3}}{4} \nabla^{2} U(\q + \frac{\deltat}{2}\M^{-1} \proj_{\q}  \S t_{1}),
\]
for sufficiently small $\deltat$ 
\[
\nabla_{\Rand_{2}}\proj_{\z}  \Phi^{\rm \hat{O}ABA\hat{O}}_{\deltat}(\x)=  \S,
\]
gives us that  $D_{t}\Phi^{\rm \hat{O}ABA\hat{O}}_{\deltat}(\x)$  has rank $2n+m$ for any value of $\x$ provided that $\deltat$ is sufficiently small. In conclusion,  $D_{t}\Psi_{\deltat,\x}(t_{1},t_{2}) $ has full rank, $2n+m$ for any $\x\in \xDomain$. In particular, $\Psi_{\deltat,\x}$ is surjective for any  $\x\in \xDomain$ by the inverse function theorem. 

By Lemma 6.3 in \cite{benaim2015qualitative} we have that for any $\x,\hat{\x}\in \xDomain$, there exist open vicinities $J_{\x}$, $J_{\hat{\x}}$ of $\x$ and $\hat{\x}$, respectively, and a  constant $c_{\x,\hat{\x}}>0$ so that 
\begin{equation}\label{eq:inversion:1}
\forall \x'\in J_{\x}\quad  \forall \,\varphi \in \mathcal{C}_{0}(\xDomain,\RR)\qquad 
\Pc^{2}_{\deltat}( \mathbbm{1}_{J_{\hat{\x}}}\varphi ) (\x') 
> c_{\x,\hat{\x}} \int_{J_{\hat{\x}} } \varphi(\x) \lambda(\dd \x),
\end{equation} 
where $\lambda$ denotes the Lebesgue measure on $\xDomain$ and $\mathbbm{1}_{J}$ is the indicator function of the set $J$. 
Let $C\subseteq \xDomain$ be an arbitrary compact set in $\xDomain$. Compactness of $C$ implies that there is a finite set of pairs $\{ (\x_{i},\hat{\x}_{i})\}_{i=1}^{N} \subset C \times C$, with corresponding vicinities $J_{\x_{i}}, J_{\hat{\x}_{i}}$ satisfying \eqref{eq:inversion:1} so that the collection
\[
J_{\x_{i}} \times J_{\hat{\x}_{i}}, \quad i=1,\dots,N,
\] forms a cover of $C\times C$. If follows that for the choice $\nu = \lambda$ and $\alpha = \min_{i}c_{\x_{i},\hat{\x_{i}}}$ condition \cref{eq:as:umc} of \cref{as:umc} is guaranteed to be satisfied.

\section{Integration schemes for the underdamped and overdamped Langevin equation}
Here, we provide an algorithmic implementation of the \textnormal{ld-BAOAB} splitting method for the underdamped Langevin equation \eqref{eq:LD} which is referenced in \cref{col:discrete:WN}. 
\begin{algorithm}[H]
\caption{ ld-BAOAB}
\label{alg:ld:baoab}
\begin{algorithmic}
\setstretch{1.25}
\STATE {{\textbf{Input: }}{$(\q,\p)$}}
\STATE{$\p \gets  \p - \frac{\deltat}{2} \nabla U(\q)$}
\STATE{$\q \gets \q +  \frac{\deltat}{2}\M^{-1}\p$}
\STATE{$\p \gets \widehat{{\bm F}}_{\deltat} \p + \widehat{{\bm S}}_{\deltat}\widehat{\Rand}$}
\STATE{$\q \gets \q +  \frac{\deltat}{2}\M^{-1}\p$}
\STATE{$\p \gets  \p -  \frac{\deltat}{2}\nabla U(\q)$}
\STATE{\textbf{Output: }{$(\q,\p)$}}
\end{algorithmic}
\end{algorithm}
In this algorithm $\widehat{{\bm F}}_{\deltat} =\exp(-\deltat \GammaLD\M^{-1})$ denotes the Matrix exponential of $ -\deltat\GammaLD\M^{-1}$, $\widehat{{\bm S}}_{\deltat}\widehat{{\bm S}}_{\deltat}^{\trans} = \beta^{-1} [ \M  - \widehat{{\bm F}}_{\deltat} \M\widehat{{\bm F}}^{\trans}_{\deltat}]$, and $\widehat{\Rand}\sim \mathcal{N}(\0,I_{n})$. In particular, for $\Gammabf = \gamma \I_{n},\gamma>0$, and $\M=\diag(m_{1},\dots,m_{n}), m_{i}>0, 1\leq i \leq n$, the scheme reduces to the BAOAB splitting method for the underdamped Langevin equation first introduced in \cite{Leimkuhler2013a}.

\section{ Limit of $2 \times 2$-matrix exponential}\label{supp:sec:limit:22exp}
The following Lemma is used in the main text in the derivation of the white noise limit of the \textnormal{gle-BAOAB} method (\cref{col:discrete:WN}).
\begin{lemma}\label{lem:exp:limit} For any $a \in \RR, \, b>0$ we have
\[
\lim_{\epsilon \rightarrow 0} \exp\left (  -\begin{pmatrix} 0 & -\epsilon^{-1}a \\ \epsilon^{-1}a & \epsilon^{-2}b \end{pmatrix} \right )  =\begin{pmatrix} e^{-a^{2}/b} & 0 \\ 0 & 0 \end{pmatrix}.
\]
\end{lemma}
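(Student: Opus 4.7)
The plan is to exploit the $2\times 2$ structure via the Cayley--Hamilton representation of the matrix exponential. Write $M_\epsilon := \begin{pmatrix} 0 & -\epsilon^{-1}a \\ \epsilon^{-1}a & \epsilon^{-2}b \end{pmatrix}$ and note first that its characteristic polynomial is $\lambda^2 - \epsilon^{-2}b\,\lambda + \epsilon^{-2}a^2 = 0$, so the eigenvalues are $\lambda_\pm(\epsilon) = \tfrac{1}{2\epsilon^2}\bigl(b \pm \sqrt{b^2 - 4\epsilon^2 a^2}\bigr)$, which are real, positive and distinct for all $\epsilon>0$ with $4\epsilon^2 a^2 < b^2$. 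A Taylor expansion of the square root yields
\[
\lambda_+(\epsilon) = \epsilon^{-2} b - a^2/b + O(\epsilon^2), \qquad \lambda_-(\epsilon) = a^2/b + O(\epsilon^2),
\]
as $\epsilon \to 0$. In particular $e^{-\lambda_+(\epsilon)} \to 0$ super-exponentially fast while $e^{-\lambda_-(\epsilon)} \to e^{-a^2/b}$.

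Since $M_\epsilon$ has two distinct eigenvalues, Cayley--Hamilton (equivalently Putzer's formula) gives the closed form $\exp(-M_\epsilon) = \alpha(\epsilon)\, I + \beta(\epsilon)\, M_\epsilon$, where
\[
\alpha(\epsilon) = \frac{\lambda_+ e^{-\lambda_-} - \lambda_- e^{-\lambda_+}}{\lambda_+ - \lambda_-}, \qquad \beta(\epsilon) = \frac{e^{-\lambda_+} - e^{-\lambda_-}}{\lambda_+ - \lambda_-}.
\]
Setting $\delta(\epsilon) := \sqrt{b^2 - 4\epsilon^2 a^2}$ one has $\lambda_+ - \lambda_- = \epsilon^{-2}\delta(\epsilon)$, and a short manipulation shows $\alpha(\epsilon) = \tfrac{b+\delta}{2\delta}e^{-\lambda_-} - \tfrac{b-\delta}{2\delta}e^{-\lambda_+}$. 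Because $b - \delta(\epsilon) = 4\epsilon^2 a^2 /(b+\delta) = O(\epsilon^2)$ and $\delta(\epsilon) \to b$, the second summand vanishes in the limit, while the first converges to $e^{-a^2/b}$; hence $\alpha(\epsilon) \to e^{-a^2/b}$.

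For the matrix term, the off-diagonal entries of $\beta(\epsilon) M_\epsilon$ equal $\pm a\epsilon^{-1}\beta(\epsilon) = \pm a\epsilon(e^{-\lambda_+} - e^{-\lambda_-})/\delta(\epsilon) = O(\epsilon)$ and so vanish, while the $(2,2)$ entry is $\epsilon^{-2}b\,\beta(\epsilon) = b(e^{-\lambda_+} - e^{-\lambda_-})/\delta(\epsilon)$, which converges to $-e^{-a^2/b}$. Summing with $\alpha(\epsilon)I$ produces exactly the claimed limit matrix. The one subtlety worth flagging---not a deep obstacle, but the only place where carelessness would fail---is that $M_\epsilon$ itself diverges componentwise as $\epsilon \to 0$, so one cannot separately pass to the limit in $\beta(\epsilon)$ and in $M_\epsilon$; rather, the $\epsilon^{-2}$ divergence of $\lambda_+ - \lambda_-$ in the denominator of $\beta$ must be combined with the $\epsilon^{-1}$ and $\epsilon^{-2}$ blow-ups in the entries of $M_\epsilon$ before the limit is taken, yielding the cancellations described above.
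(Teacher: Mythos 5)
Your proof is correct and follows essentially the same path as the paper: compute the eigenvalues of the $2\times 2$ matrix, obtain an exact closed form for its (negative) exponential, and pass to the limit entry by entry. The only cosmetic difference is that you use the Cayley--Hamilton (Lagrange--Sylvester) representation $\exp(-M_\epsilon)=\alpha(\epsilon)I+\beta(\epsilon)M_\epsilon$, which bypasses the explicit eigenvector computation and diagonalization that the paper carries out before reading off the same limits.
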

\begin{proof}
Define the shorthand
\[
A^{\epsilon} = \begin{pmatrix} 0 & -\epsilon^{-1}a \\ \epsilon^{-1}a & \epsilon^{-2}b \end{pmatrix}
\]
and $c_{\epsilon} = \sqrt{b^2-4 a^2 \epsilon ^2}$. The eigenvalues of the matrix $A^{\epsilon}$ are
\[
\lambda_{1}^{\epsilon} =\frac{c_{\epsilon}+b}{2 \epsilon ^2},~
\lambda_{2}^{\epsilon} =\frac{-c_{\epsilon}+b}{2 \epsilon ^2},
\]
with corresponding eigenvectors 
\[
v_{1} = \begin{pmatrix} -\frac{b-c_{\epsilon}}{2 a \epsilon } \\ 1 \end{pmatrix},~
v_{2} = \begin{pmatrix} -\frac{c_{\epsilon}+b}{2 a \epsilon } \\ 1 \end{pmatrix},
\]
respectively, thus
\[
\begin{aligned}
\exp(A^{\epsilon} ) &= [v_{1},v_{2}]  \diag(e^{-\lambda_{1}^{\epsilon}},e^{-\lambda_{2}^{\epsilon}}) [v_{1},v_{2}] ^{-1} \\
&=
\begin{pmatrix}
e^{-\lambda_{2}^{\epsilon}}   \frac{\left(b+c_{\epsilon}\right)}{2 c_{\epsilon}}-e^{-\lambda_{1}^{\epsilon}} \frac{ \left(b-c_{\epsilon}\right)}{2 c_{\epsilon}} 
 &
 \left ( e^{-\lambda_{2}^{\epsilon}} -e^{-\lambda_{1}^{\epsilon}}  \right ) \frac{a \epsilon }{c_{\epsilon}}
 \\
-\left ( e^{-\lambda_{2}^{\epsilon}} -e^{-\lambda_{1}^{\epsilon}}  \right ) \frac{a \epsilon }{c_{\epsilon}}
  & \frac{e^{-\lambda_{1}^{\epsilon}} \left(b+c_{\epsilon}\right)}{2 c_{\epsilon}}-\frac{e^{-\lambda_{2}^{\epsilon}} \left(b-c_{\epsilon}\right)}{2 c_{\epsilon}}
\end{pmatrix},
\end{aligned}
\]
and the result follows since
\[
\lambda_{1}^{\epsilon} \rightarrow \infty, ~\lambda_{2}^{\epsilon} \rightarrow  \frac{a^{2}}{b}, ~c_{\epsilon} \rightarrow b,
\]
as $\epsilon \rightarrow 0$.
\end{proof}

\section{Derivation of leading order term in solution of \eqref{eq:exp:epsilon}}\label{supp:sec:pde:solution}
In what follows we derive the form of the leading order term $f_{3,0}$ from the collection of PDEs collection of PDEs \cref{pde1b,pde2b,pde3b,pde4b}.

A simple calculation shows that $\Lco^{*} \left (  -\frac{1}{8 }\beta  \p^2 U''(\q)  \right ) $
coincides with the right hand side of \cref{pde1b}, thus 
 \begin{equation}\label{eq:f20:gen}
 f_{3,0}(\q,\p,\s) = -\frac{1}{8 }\beta  \p^2 U''(\q) + \phi(\q),
\end{equation}
and this solution of \cref{pde1b} is uniquely determined up to the term $\phi(\q)$. This follows since by ergodicity of the OU-process associated with $\Lco^{*}$, the null-space of $\Lco^{*}$ consists of functions which are constant in $(\p,\s)$.
Substituting this form of $f_{3,0}$ in \eqref{pde2b} yields
\begin{equation}\label{eq:f21}
\Lco^{*}f_{3,1}(\q,\p,\s)= \frac{1}{24} \beta  \p^3 U^{(3)}(\q)+\p \phi'(\q).
\end{equation}
Again, by the same arguments as used to derive the generic form of $f_{3,0}$, we find that the solution $f_{3,1}$ of \cref{eq:f21} is uniquely determined up to a function $\Psi(\q)$, i.e.,
\begin{equation}\label{eq:f21:form}
f_{3,1}(\q,\p,\s) = g_{\Gammabf}(\q,\p,\s) + \Psi(\q),
\end{equation}
where 
\begin{equation*}
\begin{aligned}
&g_{\Gammabf}(q,p,s)=\\
&\frac{1}{g(\Gammabf)}  \Big[ -2 \left(s  \Gamma _{2,1}-p \Gamma _{2,2}\right) \left(\Gamma _{2,2}^2
   \left(\left(\beta  p^2+6\right) U^{(3)}(q)-72 \phi'(q)\right)-2 \beta  s  p \Gamma
   _{2,2} \Gamma _{2,1} U^{(3)}(q)+\Gamma _{2,1}^2 \left(\beta  s ^2-3\right)
   U^{(3)}(q)\right)\\
   +& 2 \Gamma _{1,1}^2 \left(p \Gamma _{2,2} \left(\left(\beta 
   p^2+6\right) U^{(3)}(q)-72 \phi'(q)\right)+9 s  \Gamma _{2,1} \left(8
   \phi'(q)-U^{(3)}(q)\right)\right)\\
   +&\Gamma _{1,1} \left(\Gamma _{2,1} \left(3 s  \Gamma
   _{2,2} \left(120 \phi'(q)-\left(\beta  p^2+14\right) U^{(3)}(q)\right)-2 p \Gamma _{1,2}
   \left(\beta  p^2-3\right) U^{(3)}(q)\right)\right)\\
   +& \Gamma _{1,1}  5 p \Gamma _{2,2}^2 \left(\left(\beta 
   p^2+6\right)U^{(3)}(q)-72 \phi'(q)\right)\\
   +&\Gamma _{1,2} \Gamma _{2,1} \left(3
   s  \Gamma _{2,1} \left(\left(\beta  p^2+2\right) U^{(3)}(q)-24 \phi'(q)\right)+p
   \Gamma _{2,2} \left(\left(6-5 \beta  p^2\right) U^{(3)}(q)+72 \phi'(q)\right)\right) \Big  ]  
\end{aligned}
\end{equation*}
with
\[
\small
g(\Gammabf) = 72
   \left(\Gamma _{1,2} \Gamma _{2,1}-\Gamma _{1,1} \Gamma _{2,2}\right) \left(-2 \Gamma
   _{1,1}^2-5 \Gamma _{2,2} \Gamma _{1,1}-2 \Gamma _{2,2}^2+\Gamma _{1,2} \Gamma
   _{2,1}\right).
\]
Next, by virtue of the Fredholm alternative, there exists a solution for \cref{pde3b} if and only if $\innerLmu{g, \Lch^{*}f_{3,1}} = 0$
for all functions $g$ contained in the null-space of $\Lco$. This is exactly the case if 
\begin{equation}\label{eq:final:eq:sdsds}
\begin{aligned}
0& =\int_{\pDomain \times \sDomain}\mathcal{L}^{*}_{H}f_{3,1}(\q,\p,\s) \muinv( \dd \p \,\dd \s) \\
&=
\frac{\Gamma _{2,2}}{\Gamma _{1,1} \Gamma _{2,2}-\Gamma _{1,2} \Gamma _{2,1}} \Big ( 
\frac{U^{(4)}(\q)}{8 \beta }+U'(\q) \phi'(\q)
-\frac{1}{8} U^{(3)}(\q) U'(\q)-\frac{\phi''(\q)}{\beta} \Big ). 
\end{aligned}
\end{equation}
for $\muinv$-almost all $\q \in \RR^{n}$. The right hand side of \cref{eq:final:eq:sdsds} vanishes if and only if $\phi(\q) = \frac{1}{8}U''(\q)$, and we conclude
\[
 f_{3,0}(\q,\p,\s) = -\frac{1}{8 }\beta  \p^2 U''(\q) +  \frac{1}{8}U''(\q).
\]

\section{Details on numerical simulations}
\subsection{Specification of Bayesian Gaussian mixture model for Hidalgo stamps dataset}\label{supp:sec:hidalgo}
We assume a likelihood of the form 
\[
 p(y_{i},z_{i})_{1\leq i \leq N} \given \q )  = \sum_{k=1}^{N_{c}} {\bm w}_{k} {\bm \lambda}_{k}^{1/2} \exp \left (-\frac{{\bm \lambda}_{k}}{2}(y_{i}-{\bm \mu}_{k} )^{2}\right ) 
\]
and a hierarchical prior specified by
\[ 
\begin{aligned}
{\bf \mu}_{k} &\sim \mathcal{N}(m, \kappa^{-1}),\\
{\bf \lambda}_{k} & \sim \text{Gamma}(\alpha,\beta),\\
\beta & \sim \text{Gamma}(g,h),\\
({\bf w}_{1},{\bf w}_{2},{\bf w}_{3}) &\sim \text{Dirichlet}_{3}(1,1,1).
\end{aligned}
\]
with $m = M, \kappa = 4/R^{2},\,\alpha=2,\,g=0.2,\,h=100g/(\alpha R^{2})$, where $M$ and $R$ denotes the mean and the range of the data $(y_{i})_{1\leq i\leq N}$, respectively.

 Under these assumptions the resulting posterior density reads (for a derivation see \cite[Section 2.1]{Stoltz2012}):
 \[
 \begin{aligned}\label{eq:dens:bayes:mixture}
 p(\thetabf \given (y_{i},z_{i})_{1\leq i \leq N} ) =\,&  \frac{\kappa^{K/2}g^{h}\beta^{N_{c} \alpha + g -1}}{\Gamma(\alpha)^{K}\Gamma(g)(2\pi)^{\frac{n+K}{2}}}\left ( \prod_{k=1}^{N_{c}}{\bm \lambda}_{k}\right )^{\alpha-1}\\
& \times \exp\left ( -\frac{\kappa}{2}\sum_{k=1}^{N_{c}}({\bm \mu}_{k}-M)^{2}-\beta \left ( h+ \sum_{k=1}^{K}{\bm \lambda}_{k}\right ) \right )\\
& \times \prod_{i=1}^{N} \left [ \,\sum_{k=1}^{N_{c}} {\bm w}_{k} {\bm \lambda}_{k}^{1/2} \exp \left (-\frac{{\bm \lambda}_{k}}{2}(y_{i}-{\bm \mu}_{k} )^{2}\right ) \right ],
 \end{aligned}
 \]
where $\Gamma(\cdot)$ denotes the gamma function.

\subsection{Parameterization of pre-optimized memory kernel}\label{sec:opt:memory:kernel}
The matrix representation of the memory kernel used in the numerical experiment described in \cref{sec:Hidalgo} is of the form
\[
\Gammabf_{\text{kv\_8\_8}} := \I_{n} \otimes \begin{pmatrix} \Gammabf_{1,1} & \Gammabf_{1,2} \\ \Gammabf_{2,1} & \Gammabf_{2,2} \end{pmatrix},
\]
where $\otimes$ denotes the standard Kronecker product, $n=9$,  and 
\[
\begin{aligned}
\Gammabf_{1,1} &= (\begin{smallmatrix}1.336001\text{E+}1\end{smallmatrix}),\\
\Gammabf_{1,2} &=  (\begin{smallmatrix} 8.327012\text{E-}6&  1.850437\text{E-}4&  2.551111\text{E-}3&  -1.63314\text{E-}2 -1.334317\text{E-}1&  1.679873\text{E+}0&  2.22050\text{E+}1&  6.274743\text{E+}0\end{smallmatrix}),\\
\Gammabf_{2,1} &=  (\begin{smallmatrix} 1.20137\text{E-}5& 1.83376\text{E-}4& 2.505216\text{E-}3& -1.625490\text{E-}2& -1.334317 & 1.68179\text{E+}0& 2.22086\text{E+}1& 6.09239\text{E+}0\end{smallmatrix})^{\trans},\\
 \Gammabf_{2,2} &=  
  {\fontsize{.2cm}{.3cm}
   \left (
 \begin{smallmatrix}
  3.25571\text{E-}6 & 7.47982\text{E-}6&  9.622039\text{E-}6&  4.244713\text{E-}5&
            1.695553\text{E-}5&  3.285529\text{E-}5&  6.747855\text{E-}6&  -1.438594\text{E-}4\\
          -7.479820\text{E-}6&  1.019983\text{E-}4&  1.424663\text{E-}5&  -1.396013\text{E-}5&  1.513710\text{E-}5&  -1.200000\text{E-}5&  2.54657\text{E-}5&  -6.346355\text{E-}5\\
          -9.622039\text{E-}6&  -1.424663\text{E-}5&  2.206513\text{E-}3&  2.2645018\text{E-}5&  1.384732\text{E-}5&  4.388201\text{E-}5&  -4.079418\text{E-}6&  8.9663528\text{E-}3\\
         -4.244713\text{E-}5&  1.396013\text{E-}5&  -2.26450\text{E-}5&  2.218067\text{E-}2&  1.249881\text{E-}5&  2.492691\text{E-}5&  1.116974336243\text{E-}5&  3.14859310\text{E-}3\\
        -1.6955539\text{E-}5&  -1.513710\text{E-}5&  -1.3847329\text{E-}5&  -1.249881\text{E-}5&  1.772222\text{E-}1&  3.888513\text{E-}5&  -4.4198267\text{E-}6&  1.010943\text{E-}1\\
         -3.285529\text{E-}5&  1.200000\text{E-}5&  -4.388201\text{E-}5&  -2.492691\text{E-}5&  -3.888513\text{E-}5&  2.79177\text{E+}0&  8.375329164851\text{E-}6&  2.17612\text{E-}1\\
         -6.74785\text{E-}6&  -2.54657\text{E-}5&  4.07941\text{E-}6&  -1.116974\text{E-}5&  4.419826\text{E-}6&  -8.37532\text{E-}6&  4.043272\text{E+}1&  3.460867415178\text{E-}2\\
        1.438594\text{E-}4&  6.346355\text{E-}5&  -8.966352\text{E-}3&  -3.14859\text{E-}3&  -1.010943\text{E-}1&  -2.176129826302\text{E-}1&  -3.46086\text{E-}2&  1.088614\text{E+}3
        \end{smallmatrix}
        \right ).
        }
\end{aligned}
\]

\subsection{Splitting methods based on alternative decompositions}\label{sec:construction:stochastic-verlet}
For the sake of completeness and a self-contained presentation, we include a brief discussion of alternative splitting schemes for the GLE, and some previously proposed schemes. 
\subsubsection{The \textnormal{gle-ASA} and \textnormal{gle-SAS} methods}
From the decomposition of the Markovian reformulation of the GLE as 
\begin{equation}\label{splitting:GLE:1}
\begin{bmatrix}
\dd \q\\
\dd \p\\
\dd \s
\end{bmatrix}
=
\underbrace{
\begin{bmatrix}
\p \dd t\\
\0\\
\0
\end{bmatrix}
}
_\textrm{{=:A}}
+
\underbrace{
\begin{bmatrix}
\0\\
\begin{pmatrix}
- \nabla U(\q) \\
\0
\end{pmatrix}\dd t
-\Gammabf 
\begin{pmatrix}
\p\\
\s
\end{pmatrix} \dd t
+\Sigmabf \dd \W_{t}
\end{bmatrix}
}
_\textrm{{=:S}},
\end{equation}
we can construct the symmetric stochastic splitting
\[
\hat{\Phi}^{{\textrm{gle-ASA}}}_{\deltat} = \Phi^{A}_{\deltat/2} \circ \Phi^{S}_{\deltat} \circ \Phi^{A}_{\deltat/2}, \quad \hat{\Phi}^{{\textrm{gle-SAS}}}_{\deltat} = \Phi^{S}_{\deltat/2} \circ \Phi^{A}_{\deltat} \circ \Phi^{S}_{\deltat/2},
\]
where 
\begin{equation*}
\Phi^{S}_{\deltat}: (\q,\p,\s) \mapsto  \big (\q, \F\; (\p, \s)^{\trans} + \S \Rand  + \Gammabf^{-1}[\I_{n+m} - \F] (-\nabla U(\q), \0 )^{\trans} \big ), \; \Rand \sim \mathcal{N}( \0, \I_{n+m}),
\end{equation*}
and $\Phi^{A}_{\deltat}$ and $ \F$ and $\S$ are all as specified in \cref{sub:splitting:implentation}.
These methods resemble the stochastic position Verlet (SPV) and the stochastic velocity Verlet (SVV) methods which have been proposed in  \cite{Melchionna2007} as integration schemes for the white noise Langevin  equation.

\subsubsection{The \textnormal{BB-BAOB} and the  \textnormal{BB-BACOCAB} methods}
In \cite{Baczewski2013a} the authors propose a family of numerical integrators based on an extended variable formalism specifically designed for memory kernels, which take the form of a Prony series and vanishing cross-correlation terms, 
\[
\K_{ij}(t) = 
\begin{cases}
0 &\text{ if } i \neq j,\\
 \sum_{k=1}^{m} \frac{c_{k}}{\tau_{k}} e^{-|t|/\tau_{k}} &\text{ if } i = 0.
\end{cases}
\]
which in terms of \eqref{eq:qgle} corresponds to a choice of $\Gammabf$ as,
\begin{equation}
\begin{aligned}
\Gammabf_{1,1} &= \0,  &\Gammabf_{2,2} = \I_{n} \otimes \diag({1/\tau_{1}, \dots, 1/\tau_{m}}),\\
\Gammabf_{2,1} &=\Gammabf_{1,2}^{\trans}, &\Gammabf_{1,2} = \I_{n} \otimes (\sqrt{c_{1}/\tau_{1}}, \dots, \sqrt{c_{m}/\tau_{m}})
\end{aligned}
\end{equation}
Multiple splitting schemes are proposed in this work. The method to which we refer as \textnormal{BB-BAOB} is based on a splitting of the form
\begin{equation}\label{eq:bb:decomp:1}
\begin{bmatrix}
\dd \q\\
\dd \p\\
\dd \s\\
\end{bmatrix}
=
\underbrace{
\begin{bmatrix}
\p \dd t\\
\0\\
\0
\end{bmatrix}
}
_\textrm{{A}}
+
\underbrace{
\begin{bmatrix}
\0\\
- \nabla U(\q) - \Gammabf_{1,2} \s \dd t\\
\0\\
\end{bmatrix}
}
_\textrm{{B}}
+
\underbrace{
\begin{bmatrix}
\0\\
\0\\
\Gammabf_{2,1} \p - \Gammabf_{2,2}\s \dd t+\Sigmabf_{2,2} \dd \W_{t}
\end{bmatrix}
}
_\textrm{{O}}
\end{equation}
The method to which we refer as {\textnormal{BB-BACOCAB}} is based on a splitting of the form 
\begin{equation}\label{eq:bb:decomp:2}
\begin{bmatrix}
\dd \q\\
\dd \p\\
\dd \s\\
\end{bmatrix}
=
\underbrace{
\begin{bmatrix}
\p \dd t\\
\0\\
\0
\end{bmatrix}
}
_\textrm{{A}}
+
\underbrace{
\begin{bmatrix}
\0\\
-\nabla U(\q)\dd t\\
\0\\
\end{bmatrix}
}
_\textrm{{B}}
+
\underbrace{
\begin{bmatrix}
\0\\
\Gammabf_{1,2} \s \dd t\\
\0\\
\end{bmatrix}
}
_\textrm{{C}}
+
\underbrace{
\begin{bmatrix}
\0\\
\0\\
\Gammabf_{2,1}\p - \Gammabf_{2,2}\s \dd t+\Sigmabf_{2,2} \dd \W_{t}
\end{bmatrix}
}
_\textrm{{O}}
\end{equation}
It is easy to see that in the white noise limit, i.e., as $\tau_{k}\rightarrow \infty$, the (exact) Euler updates corresponding to the solutions of the \textnormal{B}-part in \eqref{eq:bb:decomp:1} and the \textnormal{C}-part in \eqref{eq:bb:decomp:2}, respectively, become unstable. 
For this reason the authors construct  the scheme {\textnormal{BB-BAOB}}  as 
\begin{equation}\label{Alg:gle-BB3}
\begin{aligned}
\p_{i}^{n+1/2} &= \p_{i}^{n} - \frac{\deltat}{2}\nabla_{\q_{i}}U(\q_{i}^{n}) + \frac{\deltat}{2}\sum_{k=1}^{m}\s_{k}^{n}\\
\q_{i}^{n+1}  &=\q_{i}^{n} + \deltat m^{-1} \p_{i}^{n+1/2}\\
\s_{k}^{n+1} &= \theta_{k} \s_{k}^{n} - (1-\theta_{k})c_{k} \p_{i}^{n+1/2} + \alpha_{k}\sqrt{2\beta^{-1}c_{k}}\Rand_{k}^{n}\\
\p_{i}^{n+1} &= \p_{i}^{n+1/2} - \frac{\deltat}{2}\nabla_{\q_{i}}U(\q_{i}^{n+1})+ \frac{\deltat}{2}\sum_{k=1}^{m}\s_{k}^{n+1} \\
\end{aligned}
\end{equation}
and the scheme {\textnormal{BB-BACOCAB}} as
\begin{equation}\label{Alg:gle-BB3b}
\begin{aligned}
\p_{i}^{n+1/2} &= \p_{i}^{n} - \frac{\deltat}{2} \nabla_{\q_{i}}U(\q^{n})\\
\q_{i}^{n+1/2} &= \q_{i}^{n}+ m^{-1}\frac{\deltat}{2} \p_{i}^{n} \\
\tilde{\p}^{n+1/2} &= \p_{i}^{n+1/2} - \frac{\deltat}{2} \sum_{i=1}^{m}\s_{i}^{n} \\
\s_{k}^{n+1}
& =\theta_{k}\s_{k}^{n} - (1- \theta_{k})c_{k}m^{-1} \tilde{\p}^{n+1/2} + \alpha_{k}\sqrt{2\beta^{-1} c_{k}}\Rand_{k}^{n}\\
\hat{\p}^{n+1/2} &= \tilde{\p}^{n+1/2} - \frac{\deltat}{2} \sum_{i=1}^{m}\s_{i}^{n+1} \\
\q_{i}^{n+1} &=\q_{i}^{n+1/2} + \frac{\deltat}{2} \hat{p}^{n+1/2}\\
\p_{i}^{n+1} &=\hat{p}^{n+1/2} - m^{-1} \frac{\deltat}{2} \nabla_{\q_{i}}U(\q^{n+1/2})
\end{aligned}
\end{equation}
where  $\Rand_{k}^{n}, ~~ 1 \leq k \leq m, ~ n \in \mathbb{N}$ are i.i.d. normal distributed random variables.
 and the coefficients $\theta_{k},\alpha_{k}$ are chosen as $\theta_{k} = e^{-\deltat/\tau_{k}}$, and 
 \begin{equation}\label{eq:alpha}
 \alpha_{k} = \sqrt{ \frac{(1-\theta_{k})^{2}}{\deltat}}.
\end{equation}
 
 These schemes resemble splitting schemes in the sense that if the $\alpha_{k}$s were instead set to $\alpha_{k}=\sqrt{ {(1-\theta_{k})^{2}}}$, then, 
 the above schemes would exactly correspond to splitting schemes corresponding to the integration sequences BAOAB and BACOCAB, respectively.
 By choosing $\alpha_{k}$ instead as specified in \eqref{eq:alpha} the scheme remains stable in the white-noise limit.

\end{document}